\title{Modal model theory}
\author{Joel David Hamkins}
\address[Joel David Hamkins]
{Professor of Logic, University of Oxford \&\ Sir Peter Strawson Fellow, University College, High Street, Oxford OX1 4BH, United Kingdom}
         \email{joeldavid.hamkins@philosophy.ox.ac.uk}
         \urladdr{http://jdh.hamkins.org}
\author{Wojciech Aleksander Wo\l oszyn}
\address[Wojciech Aleksander Wo\l oszyn]
{Mathematical Institute, University of Oxford, Andrew Wiles Building, Radcliffe Observatory Quarter, Woodstock Road, Oxford, OX2 6GG, United Kingdom \&\ St Hilda's College, Cowley Place, Oxford, OX4 1DY, United Kingdom}
\email{wojciech.woloszyn@st-hildas.ox.ac.uk}
\urladdr{https://woloszyn.org}
\thanks{Commentary can be made about this article on the first author's blog at \href{http://jdh.hamkins.org/modal-model-theory}{http://jdh.hamkins.org/modal-model-theory}.}
\newtheorem{theorem}{Theorem}
\newtheorem*{theorem*}{Theorem}
\newtheorem*{maintheorem*}{Main Theorem}
\newtheorem*{maintheorems*}{Main Theorems}
\newtheorem{corollary}[theorem]{Corollary}
\newtheorem*{corollary*}{Corollary}
\newtheorem*{corollaries*}{Corollaries}
\newtheorem{keylemma}[theorem]{Key Lemma}
\newtheorem{question}[theorem]{Question}
\newtheorem*{question*}{Question}
\newtheorem*{questions*}{Questions}
\newtheorem*{mainquestion*}{Main Question} 
\newtheorem*{openquestion*}{Open Question} 
\newtheorem{observation}[theorem]{Observation}
\newtheorem{definition}[theorem]{Definition}
\newcommand{\QED}{\end{proof}}
\def\proclaim[#1]{{\bf #1}}
\def\BF#1.{{\bf #1.}}
\def\says#1:#2\par{\item[#1] #2\par}
\newcommand{\Los}{\L o\'s}
\newcommand{\Godel}{G\"odel}
\newcommand{\Lowenheim}{L\"owenheim}
\newcommand{\N}{{\mathbb N}}
\newcommand{\Z}{{\mathbb Z}}
\newcommand{\continuum}{\mathfrak{c}}
\newcommand{\overbar}[1]{\mkern 3.5mu\overline{\mkern-3.5mu#1\mkern-.5mu}\mkern.5mu}
\newcommand\p{\frak{p}}
\newcommand{\dotminus}{\mathbin{\text{\@dotminus}}}
\newcommand{\@dotminus}{%
  \ooalign{\hidewidth\raise1ex\hbox{.}\hidewidth\cr$\m@th-$\cr}%
}
\newcommand{\of}{\subseteq}
\newcommand{\sqof}{\sqsubseteq}
\newcommand{\singleton}[1]{\left\{{#1}\right\}}
\newcommand{\elesub}{\prec}
\newcommand{\eleequiv}{\equiv}
\newcommand{\Th}{\mathop{\rm Th}}
\newcommand{\Mod}{\mathop{{\rm Mod}}}
\newcommand{\image}{\mathbin{\hbox{\tt\char'42}}}
\newcommand{\satisfies}{\models}
\DeclareMathOperator{\possible}{\text{\tikz[scale=.6ex/1cm,baseline=-.6ex,rotate=45,line width=.1ex]{\draw (-1,-1) rectangle (1,1);}}}
\DeclareMathOperator{\necessary}{\text{\tikz[scale=.6ex/1cm,baseline=-.6ex,line width=.1ex]{\draw (-1,-1) rectangle (1,1);}}}
\newcommand{\axiomf}[1]{{\rm #1}}
\newcommand{\theoryf}[1]{{\rm #1}}
\newcommand{\union}{\cup}
\newcommand{\Union}{\bigcup}
\newcommand{\smalllt}{\mathrel{\mathchoice{\raise2pt\hbox{$\scriptstyle<$}}{\raise1pt\hbox{$\scriptstyle<$}}{\raise0pt\hbox{$\scriptscriptstyle<$}}{\scriptscriptstyle<}}}
\newcommand{\smallleq}{\mathrel{\mathchoice{\raise2pt\hbox{$\scriptstyle\leq$}}{\raise1pt\hbox{$\scriptstyle\leq$}}{\raise1pt\hbox{$\scriptscriptstyle\leq$}}{\scriptscriptstyle\leq}}}
   \def\DHLhksqrt#1#2{%
   \setbox0=\hbox{$#1\sqrt{#2\,}$}\dimen0=\ht0
   \advance\dimen0-0.2\ht0
   \setbox2=\hbox{\vrule height\ht0 depth -\dimen0}%
   {\box0\lower0.4pt\box2}}
\newcommand{\boolval}[1]{\mathopen{\lbrack\!\lbrack}\,#1\,\mathclose{\rbrack\!\rbrack}}
\def\[#1]{\boolval{#1}}
\newbox\gnBoxA
\newbox\gnBoxB
\newdimen\gnCornerHgt
\newdimen\gnArgHgt
\def\gcode #1{%
\setbox\gnBoxA=\hbox{$#1$}%
\setbox\gnBoxB=\hbox{$\bar #1$}%
\gnArgHgt=\ht\gnBoxB%
\ifnum     \gnArgHgt<\gnCornerHgt \gnArgHgt=0pt%
\else \advance \gnArgHgt by -\gnCornerHgt%
\fi \raise\gnArgHgt\hbox{\tiny$\ulcorner$} \box\gnBoxA %
\raise\gnArgHgt\hbox{\tiny$\urcorner$}}
\newcommand{\UnderTilde}[1]{{\setbox1=\hbox{$#1$}\baselineskip=0pt\vtop{\hbox{$#1$}\hbox to\wd1{\hfil$\sim$\hfil}}}{}}
\newcommand{\Undertilde}[1]{{\setbox1=\hbox{$#1$}\baselineskip=0pt\vtop{\hbox{$#1$}\hbox to\wd1{\hfil$\scriptstyle\sim$\hfil}}}{}}
\newcommand{\undertilde}[1]{{\setbox1=\hbox{$#1$}\baselineskip=0pt\vtop{\hbox{$#1$}\hbox to\wd1{\hfil$\scriptscriptstyle\sim$\hfil}}}{}}
\newcommand{\UnderdTilde}[1]{{\setbox1=\hbox{$#1$}\baselineskip=0pt\vtop{\hbox{$#1$}\hbox to\wd1{\hfil$\approx$\hfil}}}{}}
\newcommand{\Underdtilde}[1]{{\setbox1=\hbox{$#1$}\baselineskip=0pt\vtop{\hbox{$#1$}\hbox to\wd1{\hfil\scriptsize$\approx$\hfil}}}{}}
\renewcommand{\implies}{\mathrel{\rightarrow}}
\renewcommand{\iff}{\mathrel{\leftrightarrow}}
\newcommand{\Iff}{\mathrel{\Longleftrightarrow}}
\def\<#1>{\left\langle#1\right\rangle}
\newcommand{\TC}{\mathop{{\rm TC}}}
\newcommand{\ETR}{{\rm ETR}}
\newcommand{\ZFC}{{\rm ZFC}}
\newcommand{\ZFCm}{\ZFC^-}
\newcommand{\KM}{{\rm KM}}
\newcommand{\GBC}{{\rm GBC}}
\newcommand{\cell}[1]{\boxit{\hbox to 17pt{\strut\hfil$#1$\hfil}}}
\newcommand{\head}[2]{\lower2pt\vbox{\hbox{\strut\footnotesize\it\hskip3pt#2}\boxit{\cell#1}}}
\newcommand{\boxit}[1]{\setbox4=\hbox{\kern2pt#1\kern2pt}\hbox{\vrule\vbox{\hrule\kern2pt\box4\kern2pt\hrule}\vrule}}
\newcommand{\Col}[3]{\hbox{\vbox{\baselineskip=0pt\parskip=0pt\cell#1\cell#2\cell#3}}}
\newcommand{\tapenames}{\raise 5pt\vbox to .7in{\hbox to .8in{\it\hfill input: \strut}\vfill\hbox to
.8in{\it\hfill scratch: \strut}\vfill\hbox to .8in{\it\hfill output: \strut}}}
\newcommand{\Head}[4]{\lower2pt\vbox{\hbox to25pt{\strut\footnotesize\it\hfill#4\hfill}\boxit{\Col#1#2#3}}}
\newcommand{\Dots}{\raise 5pt\vbox to .7in{\hbox{\ $\cdots$\strut}\vfill\hbox{\ $\cdots$\strut}\vfill\hbox{\
$\cdots$\strut}}}
\tikzset{>=Stealth,
  dot/.style={circle,draw,fill,inner sep=#1},
    dot/.default=.7pt
  }
\DeclareMathOperator{\embedpossible}{\text{\tikz[scale=.6ex/1cm,baseline=-.6ex,rotate=45,line width=.1ex]{\draw (-1,-1) rectangle (1,1);
    \draw (-1,.67) to[looseness=2.2,out=0,in=180] (1,-.67);}}}
\DeclareMathOperator{\embednecessary}{\text{\tikz[scale=.6ex/1cm,baseline=-.6ex,line width=.1ex]{\draw (-1,-1) rectangle (1,1);
    \draw (-1,0) to[looseness=2,out=40,in=220] (1,0);}}}
\newcommand{\lperpdown}{\mathrel{\text{\tikz[rotate=20,scale=2.5ex/1cm,baseline=-.1ex,line width=.125ex]{\draw (0,0) -- (1,0) (.67,0) -- (.67,-.33);}}}}
\newcommand{\ofsim}{\mathrel{\raisebox{-3pt}{\vbox{\baselineskip=3pt\hbox{$\subset$}\hbox{\tiny$\,\sim$}}}}}
\begin{document}

\begin{abstract}
We introduce the subject of modal model theory, where one studies a mathematical structure within a class of similar structures under an extension concept that gives rise to mathematically natural notions of possibility and necessity. A statement $\varphi$ is \emph{possible} in a structure (written $\possible\varphi$) if $\varphi$ is true in some extension of that structure, and $\varphi$ is \emph{necessary} (written $\necessary\varphi$) if it is true in all extensions of the structure. A principal case for us will be the class $\Mod(T)$ of all models of a given theory $T$---all graphs, all groups, all fields, or what have you---considered under the substructure relation. In this article, we aim to develop the resulting modal model theory. The class of all graphs is a particularly insightful case illustrating the remarkable power of the modal vocabulary, for the modal language of graph theory can express connectedness, $k$-colorability, finiteness, countability, size continuum, size $\aleph_1$, $\aleph_2$, $\aleph_\omega$, $\beth_\omega$, first $\beth$-fixed point, first $\beth$-hyper-fixed-point and much more. A graph obeys the maximality principle $\possible\necessary\varphi(a)\to\varphi(a)$ with parameters if and only if it satisfies the theory of the countable random graph, and it satisfies the maximality principle for sentences if and only if it is universal for finite graphs.
\end{abstract}

\maketitle

\tableofcontents

\section{Introduction}

In modal model theory, we consider a mathematical structure within the context of a class of similar structures, investigating the nature of possibility and necessity as one extends to larger structures. In the general case, we have a \emph{potentialist system}, which is a class $\mathcal W$ of models in a common language together with an extension relation $M\sqof N$ refining the substructure relation, and we define the natural modal operators:
\begin{enumerate}
  \item A model $M$ in $\mathcal W$ thinks $\varphi$ is \emph{possible}, written $M\satisfies\possible\varphi$, if there is an extension $M\sqof N$ with $N\satisfies \varphi$.
  \item A model $M$ in $\mathcal W$ thinks $\varphi$ is \emph{necessary}, written $M\satisfies\necessary\varphi$, if every extension $M\sqof N$ has $N\satisfies \varphi$.
\end{enumerate}
In modal model theory, we focus particularly on the case of $\mathcal{W}=\Mod(T)$, the potentialist system consisting of all the models of a given first-order theory $T$, considered under the submodel relation. We shall consider all graphs, all groups, all fields, or what have you, considering each model in the context of all models of the corresponding theory. By augmenting the languages with these modal operators, every first-order language and theory thus extends canonically to a modal language and theory, whose basic model theory and expressive power we aim to investigate.

To illustrate the modal vocabulary, observe that in the class of all graphs, every graph thinks ``possibly the diameter is $2$,'' since any graph can be extended to include a new vertex having suitable edges with the previous nodes so as to make the diameter of the larger graph exactly $2$; in the class of all groups, every group is possibly necessarily non-abelian, since every group is a subgroup of a nonabelian group, all of whose further extensions will be non-abelian; and in fields, possibly every element is a square, but this statement is necessarily not necessary.


For clarity let us be precise about the various distinct but closely related languages we shall treat in this article.
\begin{enumerate}
  \item We denote by $\mathcal{L}$ the language of the structures in the potentialist system $\mathcal{W}$. In the case of $\Mod(T)$, this is the language of the theory $T$.
  \item $\possible\mathcal{L}$ is the closure of $\mathcal{L}$ under the modal operators $\possible,\necessary$ and Boolean connectives (but not quantifiers).
  \item $\mathcal{L}^{\possible}$ is the full first-order modal language, closing $\mathcal{L}$ under modal operators, Boolean connectives and quantifiers.
  \item $\mathcal{L}^{\possible,@}$ extends the full modal language with the actuality operator~@, explained in section \ref{Section.Actuality}.
  \item $\mathcal{P}$ is the language of propositional modal logic, with propositional variables $p$, $q$, $r$, and so on, closed under Boolean connectives and modal operators.
\end{enumerate}
The assertions of $\possible\mathcal{L}$ are exactly those assertions of $\mathcal{L}^{\possible}$ in which no modal operator falls under the scope of a quantifier; these are the same as the substitution instances $\varphi(\psi_0,\ldots,\psi_n)$, where $\varphi(p_0,\ldots,p_n)$ is a propositional modal assertion in $\mathcal{P}$ and each $\psi_i$ is an assertion of the first-order language $\mathcal{L}$. Our results in section \ref{Section.Elementary-modal-model-theory} will show that much of the classical model theory extends from the base language $\mathcal{L}$ to $\possible\mathcal{L}$, but not to $\mathcal{L}^{\possible}$, which is why we separate out this language fragment. For example, theorem \ref{Theorem.Lowenheim-Skolem} shows that $\possible\mathcal{L}$ obeys the \Lowenheim-Skolem theorem in $\Mod(T)$, but $\mathcal{L}^{\possible}$ does not in general.

The \emph{potentialist system} terminology was introduced in \cite{HamkinsLinnebo:Modal-logic-of-set-theoretic-potentialism}, with additional related work in \cite{HamkinsWoodin:The-universal-finite-set, HamkinsWilliams:The-universal-finite-sequence, Hamkins:The-modal-logic-of-arithmetic-potentialism} analyzing the modal logic of models of arithmetic and set theory. The modal analysis of forcing extensions in set theory had begun earlier with \cite{Hamkins2003:MaximalityPrinciple} and continued with \cite{HamkinsLoewe2008:TheModalLogicOfForcing, HamkinsLoewe2013:MovingUpAndDownInTheGenericMultiverse, HamkinsLeibmanLoewe2015:StructuralConnectionsForcingClassAndItsModalLogic}.

\section{The remarkable expressive power of modal graph theory}\label{Section.Expressive-power-of-modal-graph-theory}

Let us begin our project by illustrating the remarkable expressive power of the modal language of graph theory. We work in the class of all graphs, where a graph is a set of vertices and an irreflexive, symmetric binary edge relation $\sim$. (So, there are no self-edges and no parallel edges.) Let $\mathcal{L}_\sim$ be the first-order language of graph theory, which has only the binary edge relation $\sim$.

\begin{theorem}\label{Theorem.2-colorability-is-expressible}
In the class of graphs, $2$-colorability is expressible in the modal language of graph theory. There is a sentence $\rho_2\in \possible\mathcal{L}_\sim$ such that for any graph $G$,
 $$G\satisfies\rho_2\qquad\text{ if and only if }\qquad G\text{ is $2$-colorable.}$$
Similarly, $k$-colorability is expressible by a sentence $\rho_k$ for any finite $k$.
\end{theorem}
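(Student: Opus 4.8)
The plan is to express $k$-colorability by a single modal sentence $\rho_k:=\possible\varphi_k$, where $\varphi_k$ is an ordinary first-order sentence of $\mathcal{L}_\sim$ (with quantifiers but no modal operators), so that $\rho_k$ lies in $\possible\mathcal{L}_\sim$. The sentence $\varphi_k$ will assert the existence of a coloring gadget: there are $k$ distinct vertices $a_1,\dots,a_k$, which we think of as \emph{anti-color markers}, such that (a) every vertex other than the $a_i$ is adjacent to exactly $k-1$ of them --- equivalently, misses exactly one of them, and we read the index of the missed marker as the color of that vertex --- and (b) for every edge $v\sim w$ between two non-markers and every index $i$, at least one of $v,w$ is adjacent to $a_i$, which is precisely the demand that no marker is missed by both endpoints of an edge. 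Spelled out, for each fixed $k$ this is the finitary formula
\[
\varphi_k\ :=\ \exists a_1\cdots\exists a_k\Big[\ \bigwedge_{i<j}a_i\neq a_j\ \wedge\ \forall v\Big(\bigwedge_i v\neq a_i\ \to\ \bigvee_i\big(\neg(v\sim a_i)\wedge\!\bigwedge_{j\neq i}\!v\sim a_j\big)\Big)\ \wedge\ \forall v\forall w\Big(v\sim w\wedge\!\bigwedge_i\!(v\neq a_i\wedge w\neq a_i)\ \to\ \bigwedge_i(v\sim a_i\vee w\sim a_i)\Big)\ \Big],
\]
and we put $\rho_k:=\possible\varphi_k$, with $\rho_2$ the named instance for $k=2$.

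For the forward implication, suppose $G$ is $k$-colorable, witnessed by a proper coloring $c\colon V(G)\to\{1,\dots,k\}$. Form the extension $G\sqof H$ by adjoining $k$ fresh vertices $a_1,\dots,a_k$ and declaring $v\sim a_i$ precisely when $i\neq c(v)$, for each $v\in V(G)$, with no further new edges. Then $G$ is an induced subgraph of $H$; the $a_i$ are distinct; each $v\in V(G)$ misses exactly the marker $a_{c(v)}$; and if $v\sim w$ in $G$ then $c(v)\neq c(w)$, so no marker is missed by both endpoints. Hence $H\satisfies\varphi_k$, and therefore $G\satisfies\rho_k$.

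For the converse, suppose $G\satisfies\rho_k$, so there is an extension $G\sqof H$ with $H\satisfies\varphi_k$ as witnessed by vertices $a_1,\dots,a_k$; since $\varphi_k$ has no modal operators, this is plain first-order satisfaction in $H$. Define a coloring of $G$ by $c(a_j):=j$ whenever the marker $a_j$ happens to lie in $V(G)$, and otherwise $c(v):=$ the unique index $i$ with $v\not\sim a_i$, which exists by clause (a). To see $c$ is proper, take an edge $v\sim w$ of $G$, hence of $H$, and split into cases: if both endpoints are markers, they are distinct markers and so receive distinct colors; if exactly one, say $v=a_j$, is a marker, then $w\sim a_j$ forces $c(w)\neq j=c(v)$; and if neither is a marker, clause (b) prevents $v$ and $w$ from missing a common marker, so $c(v)\neq c(w)$.

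The step I expect to be the crux is exactly the two ``marker'' cases in that last verification: the witnessing markers produced by $\possible\varphi_k$ need not be ``new'' vertices, so they may sit inside $G$ and carry arbitrary edges back into $G$, and one must check the coloring recipe still works in all such configurations. The anti-color encoding in clause (a) --- recording the color of $v$ as the unique marker $v$ \emph{misses} rather than the unique marker $v$ \emph{meets} --- is precisely what makes this go through: a marker $a_j$ automatically misses itself, so there is no conflict between ``$a_j$ is a marker'' and ``$a_j$ has color $j$'', whereas under the naive ``meets exactly one marker'' encoding a marker $a_j\in V(G)$ with a $G$-neighbor would be dragged into a color clash. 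Everything else --- that $\rho_k\in\possible\mathcal{L}_\sim$, the forward construction, and the bookkeeping that clauses (a) and (b) together partition the non-markers into $k$ independent sets --- is routine, and the construction is uniform in $k$.
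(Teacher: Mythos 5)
Your proof is correct and follows essentially the same strategy as the paper's: $\rho_k$ is $\possible$ applied to a purely first-order sentence asserting the existence of marker vertices whose adjacency pattern encodes a proper coloring, with fresh markers wired in according to a given coloring for the forward direction and the coloring read off the markers for the converse. The only difference is in the gadget's bookkeeping: the paper uses a positive encoding (the color of a vertex is the marker it \emph{meets}, with the markers $r$ and $b$ required to be adjacent so that they too receive colors), whereas you use the complementary ``missed marker'' encoding and exempt the markers from the counting clause --- both devices exist to handle the same subtlety you correctly identify, namely that the witnessing markers need not be new vertices.
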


\begin{proof}
We claim that a graph $G$ is $2$-colorable if and only if possibly, there are adjacent nodes $r$ and $b$, such that every node is adjacent to exactly one of them and adjacent nodes are connected to them oppositely.
$$\begin{tikzpicture}[line join=bevel,dot/.default=1.2pt]
  \draw[thick]  (2.5,0) node[dot] (b1) {} --
         (0,0) node[dot] (r1) {} --
         (2.25,.75) node[dot] (b2) {}
         (.75,.6) node[dot] (r2) {} -- (b1) --
         (1.25,1) node[dot] (r3) {} -- (b2)
         (1.5,-.35) node {$G$};
  \draw (5,.5) node[scale=2,violet] {$\to$};
  \begin{scope}[xshift=7cm]
  \draw[thick]  (2.5,0) node[dot,blue] (b1) {} --
         (0,0) node[dot,red] (r1) {} --
         (2.25,.75) node[dot,blue] (b2) {}
         (.75,.6) node[dot,red] (r2) {} -- (b1) --
         (1.25,1) node[dot,red] (r3) {} -- (b2)
         (1.5,-.35) node {$G$};
    \draw (1.25,1.6) node[blue,dot,label={[red]above:$r$}] (r) {}
          (2.25,1.5) node[red,dot,label={[blue]above:$b$}] (b) {};
    \draw[densely dotted,red!50!blue,bend left] (r) to (b);
    \draw[densely dotted,red,bend right]
        (r) to (r1)
        (r) to (r2)
        (r) to (r3);
    \draw[densely dotted,blue,bend left]
        (b) to (b1)
        (b) to (b2);
  \end{scope}
\end{tikzpicture}$$
If we think of the neighbors of $r$ as red and the neighbors of $b$ as blue, any such graph extension will be $2$-colorable. (Note that this coloring assignment will actually color $r$ blue and $b$ red, since they are neighbors of each other and not of themselves.) Conversely, if the graph is $2$-colorable, then we can add new vertices $r$ and $b$ and join with edges according to the coloring. A similar idea works for $k$-colorings with any finite number $k$.
\end{proof}

One might notice that in the particular figure of the proof, we needn't actually have added any nodes at all, since we could have used the bottom two nodes as $b$ and $r$ to realize the property already in the original graph.

\begin{theorem}
In the class of graphs, connectivity of nodes is expressible in the modal language of graph theory. There is a formula $\chi(x,y)$ in $\possible\mathcal{L}_\sim$ expressing that vertex $x$ is connected to vertex $y$.
 $$G\satisfies\chi(x,y)\qquad\text{ if and only if }\qquad x\text{ is connected to }y\text{ in }G.$$
Similarly, there is a sentence in $\mathcal{L}^{\possible}_\sim$ expressing that the graph as a whole is connected.
\end{theorem}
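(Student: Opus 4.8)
The key insight is that connectivity between two fixed vertices can be captured by a single possibility modality asserting the existence of a "witnessing path structure" that can always be added on top of $G$ exactly when a path already exists. First I would work out the formula $\chi(x,y)$. The natural idea: $x$ is connected to $y$ in $G$ if and only if possibly there is a new vertex $p$ adjacent to both $x$ and $y$ and to no other vertex of the old graph, together with a finite sequence of "marker" vertices encoding a walk from $x$ to $y$. But finiteness of the walk is the obstacle if we try to do it naively with existential quantifiers over the new structure, since first-order logic cannot say "finite chain." The cleaner route, which I would pursue, is to use the possibility operator to add a single auxiliary vertex $p$ whose neighborhood, intersected with the original graph, is precisely the connected component of $x$; we can then say $\chi(x,y)$ holds iff $\possible$ there is a vertex $p$ such that (i) $p\sim x$, (ii) $p\sim y$, and (iii) for all $u,v$, if $p\sim u$ and $u\sim v$ then $p\sim v$ — i.e. the neighborhood of $p$ (within the old graph) is closed under adjacency — and moreover $p$ is only adjacent to old vertices. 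Closure under adjacency plus containing $x$ forces the neighborhood of $p$ to include the whole component of $x$; demanding $p\sim y$ then forces $y$ into that component. Conversely, if $x$ and $y$ are connected, we add such a $p$ adjacent to exactly the component of $x$, and all three conditions hold. The subtlety is that condition (iii) as stated, interpreted in the extension $N$, quantifies over all vertices of $N$ including the newly added ones, so I would need to phrase it to range only over the relevant vertices — e.g. only over $u,v$ that are themselves adjacent to something adjacent to $x$, or restrict attention to the old graph by a preliminary possibility move; this bookkeeping is the main technical nuisance, but it is routine.

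For the sentence expressing that the whole graph is connected, I cannot simply quantify $\forall x\,\forall y\,\chi(x,y)$ inside $\possible\mathcal{L}_\sim$ because that would place a modal operator (inside $\chi$) under the scope of a quantifier — but that is exactly why the theorem only claims such a sentence in $\mathcal{L}^{\possible}_\sim$, the full first-order modal language, where quantifiers over modalities are allowed. So the plan there is immediate: the sentence is $\forall x\,\forall y\,\chi(x,y)$, or more carefully $\forall x\,\forall y\,(x\neq y\to\chi(x,y))$, read in $\mathcal{L}^{\possible}_\sim$; since $\chi$ already correctly expresses pairwise connectivity in every graph, this sentence holds in $G$ exactly when $G$ is connected. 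One should note the degenerate cases: the empty graph and the one-vertex graph are (vacuously) connected and are handled automatically since the universally quantified statement is vacuous.

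The hard part is getting $\chi$ right: specifically, ensuring that the defining condition placed inside the $\possible$ really does pin down the component of $x$ and not some larger or smaller set when evaluated in an arbitrary extension $N\sqof M$ where $N$ may have many vertices beyond those of $G$ and beyond the single witness $p$. The safest implementation is a two-step modal assertion — or rather, to exploit that $\possible$ already lets us choose the extension: we only need the condition "there exists a vertex $p$ whose old-graph-neighborhood is adjacency-closed and contains $x$" to be *satisfiable* in some extension, and the minimal such extension (add one vertex joined to the component of $x$) works; while in *any* extension where such a $p$ exists, adjacency-closure of $p$'s neighborhood together with $p\sim x\wedge p\sim y$ forces $x$ and $y$ into a common component of the ambient graph, and since $G$ is a submodel, a walk in $N$ between old vertices using the closure property can be pushed down to a walk in $G$ — here one uses that $p\sim u$ and $u\sim v$ and the closure clause to propagate membership, and that edges among old vertices are the same in $G$ and $N$. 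I would write out this downward-absoluteness argument carefully, as it is the crux, and then the converse and the degenerate cases follow quickly.
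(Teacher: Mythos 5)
There is a genuine gap, and it is in the core formula itself rather than in the bookkeeping you flag as the ``main technical nuisance.'' Your $\chi(x,y)$ has the form $\possible\exists p\,[\,p\sim x\wedge p\sim y\wedge(\text{the neighborhood of }p\text{ is closed under adjacency})\,]$, and you argue that adjacency-closure plus $p\sim x$ forces the neighborhood of $p$ to \emph{include} the component of $x$, and that $p\sim y$ then ``forces $y$ into that component.'' The second step is false: $p\sim y$ only forces $y$ into the \emph{neighborhood} of $p$, which contains the component of $x$ but need not equal it. Concretely, take any graph $G$ with $x$ and $y$ in different components and extend it by a single new vertex $p$ adjacent to \emph{every} old vertex. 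Then $p\sim x$, $p\sim y$, $p$ is adjacent only to old vertices, and the neighborhood of $p$ is closed under adjacency (with the $v\neq p$ guard), so your formula is satisfied --- indeed it is satisfied in every graph for every pair of vertices, and hence expresses nothing. The attempted repair via ``downward absoluteness'' cannot work either: the only walk from $x$ to $y$ your conditions produce in the extension is $x\sim p\sim y$, which passes through the new vertex and so does not push down to $G$. The underlying problem is that ``the neighborhood of $p$ is \emph{exactly} the component of $x$'' is a minimality condition, and your single existential witness cannot enforce minimality.

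The fix is to dualize the quantifiers, which is what the paper does: $\chi(x,y)$ should say $\necessary\forall c\,[(c\sim x\wedge\forall u,v\,(c\sim u\wedge u\sim v\wedge v\neq c\to c\sim v))\to c\sim y]$. If $x$ is connected to $y$, then in any extension any such $c$ is forced, by induction along the path, to be adjacent to every vertex on it and hence to $y$; if $x$ is not connected to $y$, one exhibits a single extension with a $c$ adjacent to exactly the component of $x$, witnessing the failure of the necessity. Minimality of the witness is never needed on the positive side, and on the negative side you get to \emph{choose} the minimal witness because you are refuting a $\necessary$. Your treatment of the second claim (the sentence $\forall x\forall y\,\chi(x,y)$ living in $\mathcal{L}^{\possible}_\sim$ rather than $\possible\mathcal{L}_\sim$) is correct and matches the paper, but it inherits the defect of $\chi$.
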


\begin{proof}
Vertex $x$ is connected with $y$ if and only if there is a finite path in the graph from $x$ to $y$. This is equivalent to asserting in the modal language that necessarily, any vertex $c$ that is adjacent to $x$ and whose neighbors are closed under adjacency is also adjacent to $y$. This can be expressed in the modal language of graph theory by the formula $\chi(x,y)$ as follows:
$$\necessary\forall c[(c\sim x\wedge\forall u,v (c\sim u\wedge u\sim v\wedge v\neq c\to c\sim v))\to c\sim y].$$
To illustrate, consider the graph pictured here at left in black, where vertex $x$ is in fact connected to vertex $y$.
$$\begin{tikzpicture}[xscale=.7]
\begin{scope}
  \node[dot] (a) at (-1.5,.5) {};
  \node[dot] (b) at (-1,1) {};
  \node[dot] (c) at (-.5,0) {};
  \node[dot] (d) at (0,.5) {};
  \node[dot=1.2pt,label=right:$x$] (e) at (1,1) {};
  \node[dot] (f) at (2,0) {};
  \node[dot] (g) at (2.5,.5) {};
  \node[dot=1.2pt,label=below:$y$] (h) at (3,.5) {};
  \draw[thick] (a.center) -- (b.center) -- (c.center)
        (f.center) -- (d.center) -- (e.center) -- (f.center) -- (g.center) -- (h.center);
  \draw (5.5,.5) node[scale=2,violet] {$\to$};
\end{scope}
\begin{scope}[xshift=9cm]
  \node[dot] (a) at (-1.5,.5) {};
  \node[dot] (b) at (-1,1) {};
  \node[dot] (c) at (-.5,0) {};
  \node[dot] (d) at (0,.5) {};
  \node[dot=1.2pt,label=right:$x$] (e) at (1,1) {};
  \node[dot=1.2pt] (f) at (2,0) {};
  \node[dot=1.2pt] (g) at (2.5,.5) {};
  \node[dot=1.2pt,label=below:$y$] (h) at (3,.5) {};
  \draw[thick] (a.center) -- (b.center) -- (c.center)
        (f.center) -- (d.center) -- (e.center);
  \draw[very thick] (e.center) -- (f.center) -- (g.center) -- (h.center);
  \node[dot,red,label={[red]above:$c$}] (c) at (2,2) {};
  \draw[red,densely dotted] \foreach \p [count=\i] in {d,e,f,g,h} { (c) to[in=90,out=180+30*\i] (\p) };
\end{scope}
\end{tikzpicture}$$
Consider an extension as at right with a vertex $c$ that is adjacent to $x$, whose neighbors are closed under adjacency. Inductively $c$ will be adjacent to every vertex on the path from $x$ to $y$, and consequently it will be adjacent to $y$. Conversely, in any graph where $x$ is not connected to $y$, we may extend it to a graph with a new node $c$ adjacent to every vertex in the connected component of $x$ and to no others. This node will be adjacent to $x$ and its neighbors will be closed under adjacency, but it will not be adjacent to $y$. So the formula will hold in a graph precisely when $x$ is connected to $y$. For a nonempty graph as a whole to be connected, it should satisfy $\forall x\forall y\ \chi(x,y)$.
\end{proof}

Notice that by relativizing the expression, we can express the connectivity of any particular definable subgraph of a graph, such as the set of neighbors of a given node.

It is a standard model-theoretic exercise to show that neither $2$-colorability nor connectivity for graphs is expressible in the ordinary language of graph theory, and so these observations show that the expressive power of modal graph theory $\possible\mathcal{L}_\sim$ strictly exceeds first-order graph theory. Let us continue---far more is expressible with the modal vocabulary.

\begin{theorem}\label{Theorem.Finiteness-is-expressible}
In the class of graphs, finiteness is expressible in the modal language of graph theory. There is a sentence $\phi\in\mathcal{L}_\sim^{\possible}$ such that for any graph $G$,
 $$G\satisfies\phi\qquad\text{ if and only if }\qquad G\text{ is finite.}$$
\end{theorem}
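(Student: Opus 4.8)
The plan is to express finiteness of $G$ by saying that $G$ can be extended so that its entire vertex set becomes the set of ``even'' vertices of a finite path, the odd vertices being newly added midpoints; the relativized connectivity formula from the theorem above (and the remark following it) is what will make this path genuinely finite. The sentence will have the form
$$\phi\ :=\ \possible\,\exists u\,\exists q\,\big[\,\forall x\,(x\neq u\to x\sim u)\ \wedge\ q\neq u\ \wedge\ \psi(u,q)\,\big],$$
where the universal vertex $u$ serves to pin down $V(G)$ and the vertex $q$ marks the midpoints.

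First I would handle the problem of referring to $V(G)$ from inside an extension. Working under the outer $\possible$ and assuming $G$ is finite, pass to the finite extension $N$ obtained from $G$ by adjoining a vertex $u$ adjacent to everything, a marker $q$, and, for an arbitrary enumeration $v_1,\dots,v_k$ of $V(G)$, one new vertex $m_i$ for each $i<k$ joined exactly to $v_i$, $v_{i+1}$, $q$, and $u$. In $N$ the old vertices are recovered as the definable set $P:=\{x:x\sim u,\ x\neq u,\ x\neq q,\ x\not\sim q\}$ and the midpoints as $M:=\{x:x\sim q,\ x\neq u\}$; one checks $P=V(G)$, $M=\{m_1,\dots,m_{k-1}\}$, and that $P$, $M$, $\{u,q\}$ partition $V(N)$. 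On $P$ define $v\approx w$ to mean $v\neq w$ and some member of $M$ is adjacent to both of $v,w$; the internal edges of $G$ do not interfere with this, and $(P,\approx)$ is precisely the path $v_1v_2\cdots v_k$.

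Now let $\psi(u,q)$ assert: every $m\in M$ has exactly two $P$-neighbors; the map sending each $m\in M$ to its pair of $P$-neighbors is injective; and either $|P|\leq 1$, or $(P,\approx)$ is connected --- expressed via the relativized connectivity formula, now applied to the derived relation $\approx$ on the definable set $P$ --- with every $\approx$-degree at most $2$ and exactly two vertices of $\approx$-degree exactly $1$. If some extension of $G$ satisfies the matrix of $\phi$ with witnesses $u,q$, then $(P,\approx)$ is a connected graph of maximum degree $2$ with exactly two vertices of degree $1$, hence a finite path on at least two vertices, so $P$ is finite; the injectivity clause then bounds $|M|$ by the number of two-element subsets of $P$, so $M$ is finite; and since $u$ is universal the whole vertex set is $P\sqcup M\sqcup\{u,q\}$, hence finite, so $G$ is finite. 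Conversely, the extension $N$ constructed above witnesses the matrix of $\phi$, the degenerate cases $|V(G)|\leq 1$ being caught by the ``$|P|\leq 1$'' disjunct. Thus $G\satisfies\phi$ if and only if $G$ is finite, and $\phi$ lies in $\mathcal{L}_\sim^{\possible}$.

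The step I expect to be the main obstacle is forcing the midpoint gadget to be finite rather than merely permitting it: the degree and injectivity clauses alone allow $(P,\approx)$ to be a finite path together with arbitrarily many disjoint cycles, double rays, or isolated vertices, so the connectivity clause is indispensable, and it must be the relativized connectivity formula applied to the \emph{derived} adjacency $\approx$. Here one must check two things: that the relativization of the connectivity theorem goes through for a derived relation on a definable set exactly as in its original proof --- testing $\approx$-connectivity with a fresh vertex joined to $u$ and linked through new midpoints to precisely one $\approx$-component --- and that, even though $N(u)$ and $P$ may grow in further extensions (so the $\necessary$ hidden inside the connectivity formula quantifies over graphs in which $u$ has gained neighbors), the formula still correctly computes $\approx$-connectivity \emph{within the graph being evaluated}, because the relevant vertices retain their $\approx$-paths and any disconnecting witness can still be built.
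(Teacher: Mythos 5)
Your proof is correct, and the engine driving it is the same as in the paper's argument: the possibility operator introduces a gadget encoding a path, the modal connectivity formula together with the degree conditions (connected, maximum degree $2$, exactly two vertices of degree $1$) forces that path to be genuinely finite rather than a path accompanied by stray rays, cycles, or isolated vertices, and finiteness then passes from the witnessing extension down to $G$ because a subset of a finite set is finite. The difference lies in the gadget. The paper adds one fresh vertex $n$ and requires its neighbor set, under the \emph{actual} edge relation, to be such a finite path, then attaches every other vertex of the extension injectively to a neighbor of $n$; the only relativization needed is to the induced subgraph on the neighbors of $n$, which is precisely the case covered by the remark following the connectivity theorem. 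You instead turn the original vertices themselves into the path, with adjacency $\approx$ derived through $q$-marked midpoints, and use a universal vertex $u$ to sweep up the whole domain. This is a legitimate alternative, but it obliges you to re-verify the connectivity characterization for a \emph{derived} relation on a definable set rather than for an induced subgraph --- the delicate point you rightly flag. Your verification does go through: since substructure here means induced substructure, no new edges appear between old vertices, so membership in $P$ and $M$ and all $\approx$-edges persist into further extensions; the forward induction then runs along a persisting $\approx$-path, and for the converse one adds a fresh witness $c$ adjacent to exactly one $\approx$-component while keeping it non-adjacent to $u$ and $q$, so that $P$, $M$, and $\approx$ are undisturbed. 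The paper's simpler gadget sidesteps this extra layer, and also needs no device for identifying $V(G)$ inside the extension, since it only ever needs the extension (not $G$ itself) to be exhibited as finite.
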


\begin{proof}
A graph $G$ is finite if and only if possibly, there is a point $n$, whose neighbor graph is connected, with all vertices of degree $2$ within that neighbor set, except exactly two vertices of degree $1$ in that neighbor set---a starting node and an ending node---and all other nodes of the graph are adjacent to exactly one neighbor of $n$ in a bijective correspondence. The neighbors of $n$ will form a finite chain from the starting vertex to the ending vertex, and the bijection will show that the graph is finite.
$$\begin{tikzpicture}
  \node[dot,label=below left:$G$] (d) at (0,.5) {};
  \node[dot] (e) at (1,1) {};
  \node[dot] (f) at (2,0) {};
  \node[dot] (g) at (2.5,.5) {};
  \node[dot] (h) at (3,.5) {};
  \draw[thick] (f.center) -- (d.center) -- (e.center) -- (f.center) -- (g.center) -- (h.center);
  \draw (4.5,.5) node[scale=2,violet] {$\to$};
\begin{scope}[xshift=7cm]
  \node[dot,label=below left:$G$] (d) at (0,.5) {};
  \node[dot] (e) at (1,1) {};
  \node[dot] (f) at (2,0) {};
  \node[dot] (g) at (2.5,.5) {};
  \node[dot] (h) at (3,.5) {};
  \draw[thick] (f.center) -- (d.center) -- (e.center) -- (f.center) -- (g.center) -- (h.center);
  \node[dot,blue,label={[blue]above:$n$}] (n) at (1.5,2.5) {};
  \foreach \k in {0,...,4} {\node[dot,blue] (\k) at (.5+.5*\k,1.5) {}; \draw[blue,very thin] (n) to[in=90,out=-130+20*\k] (\k);}
  \draw[blue,very thin,every label/.style={scale=.5}] (0) node[label=left:start] {} -- (1) -- (2) -- (3) -- (4) node[label=right:end] {};
  \draw[blue,very thin] (d) -- (0)
                             (e) -- (1)
                             (f) -- (2)
                             (g) -- (3)
                             (h) -- (4);
\end{scope}
\end{tikzpicture}$$
Since the neighbor graph of vertex $n$ is connected, it cannot have an infinite number of neighbors, for when proceeding successively from the \emph{start} node through the intermediate nodes of degree $2$, one must come to the \emph{end} node in finitely many steps, for otherwise the \emph{start} and \emph{end} nodes will occupy different connected components, contrary to assumption. So $n$ really does have only finitely many neighbors under those assumptions, and so the graph will be finite.

Note that when employing the ``possibly'' operator, the original graph also may get larger, and so the bijection is not necessarily talking only about the original graph in the extension with $n$, but possibly an extension of the original graph.
$$\begin{tikzpicture}
\begin{scope}[xshift=5cm]
  \node[dot,label=below:$G$] (d) at (0,.5) {};
  \node[dot] (e) at (1,1) {};
  \node[dot] (f) at (2,0) {};
  \node[dot] (g) at (2.5,.5) {};
  \node[dot] (h) at (3,.5) {};
  \draw[thick] (f.center) -- (d.center) -- (e.center) -- (f.center) -- (g.center) -- (h.center);
  \node[dot,blue] (c) at (-.75,.75) {};
  \node[dot,blue] (i) at (3.5,.75) {};
  \node[dot,blue,label={[blue]above:$n$}] (n) at (1.5,2.5) {};
  \foreach \k in {-1,...,5} {\node[dot,blue] (\k) at (.5+.5*\k,1.5) {}; \draw[blue,very thin] (n) to[in=90,out=-120+15*\k] (\k);}
  \draw[blue,very thin,every label/.style={scale=.5}] (-1) node[label=left:start] {} -- (0) -- (1) -- (2) -- (3) -- (4) -- (5) node[label=right:end] {};
  \draw[blue,very thin] (c) -- (-1)
                             (d) -- (0)
                             (e) -- (1)
                             (f) -- (2)
                             (g) -- (3)
                             (h) -- (4)
                             (i) -- (5)
                             (c) -- (d)
                             (h) -- (i);
\end{scope}
\end{tikzpicture}$$
But since a set is finite if and only if it has a finite extension, this is not a problem for the assertion.
\end{proof}

\begin{corollary}
In the class of graphs, for a vertex to have finite degree is a property expressible in the modal language of graph theory. There is a formula $\eta(x)$ in $\mathcal{L}_\sim^{\possible}$ that holds of a node $x$ in a graph $G$ if and only if $x$ has finite degree in $G$.
\end{corollary}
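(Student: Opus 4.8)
The plan is to relativize the finiteness sentence $\phi$ of Theorem~\ref{Theorem.Finiteness-is-expressible} to the neighbor set of $x$. A vertex $x$ has finite degree in $G$ precisely when its neighbor set $N_G(x)=\{\,v\mid v\sim x\,\}$ is finite, and $N(x)$ is a uniformly definable subset whose interpretation only grows as one passes to larger graphs. So the natural candidate is
$$\eta(x)\ :=\ \possible\,\exists n\,\bigl[\,n\neq x\ \wedge\ n\not\sim x\ \wedge\ \theta(n)\ \wedge\ \beta(x,n)\,\bigr],$$
where $\theta(n)$ asserts, exactly as in the proof of Theorem~\ref{Theorem.Finiteness-is-expressible}, that the neighbor graph of $n$ is connected, with every neighbor of $n$ having degree $2$ within $N(n)$ except for exactly two nodes of degree $1$ (the \emph{start} and \emph{end} nodes), and $\beta(x,n)$ asserts that the neighbors of $x$ stand in bijective correspondence with the neighbors of $n$---that is, every neighbor of $x$ is adjacent to exactly one neighbor of $n$, every neighbor of $n$ is adjacent to exactly one neighbor of $x$, and $N(x)\cap N(n)=\emptyset$. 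Both $\theta$ and $\beta$ are first-order in $\mathcal{L}_\sim$, so $\eta(x)\in\mathcal{L}_\sim^{\possible}$.

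First I would verify the forward direction: if $x$ has finite degree in $G$, extend $G$ to $G'$ by adjoining a fresh node $n$ together with a new finite chain of $|N_G(x)|$ vertices realizing $\theta(n)$, and wiring each neighbor of $x$ to its own distinct chain node; choosing the new vertices so that $n\not\sim x$ and no new neighbor of $x$ is created, we obtain $G'\satisfies\exists n[\cdots]$, hence $G\satisfies\eta(x)$. The trivial cases of degree $0$ or $1$ (where the chain would be empty or a single node) are handled directly, or simply disjoined in. Conversely, if $G\satisfies\eta(x)$, fix an extension $G\sqof G'$ carrying a witness $n$; the connectivity clause in $\theta(n)$ forces, by the argument of Theorem~\ref{Theorem.Finiteness-is-expressible}, the neighbor set of $n$ in $G'$ to be a finite chain, whence $\beta(x,n)$ makes $N_{G'}(x)$ finite, and therefore $N_G(x)\subseteq N_{G'}(x)$ is finite, i.e.\ $x$ has finite degree in $G$.

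The only point needing care---and the sole real obstacle---is exactly the phenomenon already flagged after the proof of Theorem~\ref{Theorem.Finiteness-is-expressible}: passing through $\possible$ may enlarge $G$, so the bijection $\beta(x,n)$ is asserted about $N_{G'}(x)$ rather than about $N_G(x)$ itself. This is harmless here for the same reason as before---a set is finite if and only if it is contained in a finite set---so witnessing the finiteness of the possibly larger $N_{G'}(x)$ still establishes the finiteness of $N_G(x)$, and in the forward direction we need only refrain from adding neighbors of $x$ when building the witnessing extension. Assembling these observations yields the formula $\eta(x)$ and the corollary.
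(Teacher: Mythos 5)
Your proposal is correct and takes essentially the same approach as the paper: relativize the finiteness construction of Theorem~\ref{Theorem.Finiteness-is-expressible} to the neighbor set of $x$, witnessed by a new node $n$ whose neighbors form a finite chain matched with the neighbors of $x$. The only cosmetic difference is that you insist on a bijection between $N(x)$ and the chain where the paper asks only that each neighbor of $x$ be adjacent to a distinct neighbor of $n$ (an injection), which changes nothing of substance.
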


\begin{proof}
This is a consequence of the previous theorem, by relativizing to the subgraph consisting of the neighbors of $x$. That is, $\eta(x)$ asserts that possibly, there is a node $n$ whose neighbors are connected and all of degree $2$ except a starting node and an ending node, such that every neighbor of $x$ is adjacent to a distinct neighbor of $n$.
\end{proof}

\begin{theorem}\label{Theorem.Countability-is-expressible}
 In the class of graphs, countability is expressible in the modal language of graph theory. There is a sentence $\sigma\in \mathcal{L}_\sim^{\possible}$ such that for any graph $G$,
 $$G\satisfies\sigma\qquad\text{ if and only if }\qquad G\text{ is countable.}$$
\end{theorem}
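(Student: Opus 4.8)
The plan is to mimic the proof of Theorem~\ref{Theorem.Finiteness-is-expressible}, replacing the finite ``counting chain'' by a one-way infinite ray. Recall that finiteness was captured there by insisting that the neighbor set of an auxiliary vertex $n$ be a connected chain with \emph{two} distinguished endpoints (a start and an end), which forces that chain to be finite. For countability I would instead insist that the neighbor set of $n$ (write $N(n)$ for this set) be connected, with \emph{exactly one} vertex of degree $1$ (the start) and every other vertex of degree $2$, all degrees being computed within $N(n)$. The combinatorial heart is the following lemma: a connected graph in which exactly one vertex has degree $1$ and all others have degree $2$ must be the one-way infinite ray of order type $\omega$. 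Indeed, by the handshake lemma it cannot be finite, since with $k\geq 2$ vertices the degree sum would be $1+2(k-1)=2k-1$, odd; it is acyclic, since any cycle would form a degree-$2$ component to which nothing further can be attached without creating a vertex of degree $3$; so it is an infinite tree of maximum degree $2$, i.e.\ an infinite path, and having a unique endpoint it is exactly the $\omega$-ray. In particular such a neighbor set is always countably infinite.

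Now let $\sigma$ be the sentence asserting: $\possible$ there is a vertex $n$ such that (i) the subgraph induced on the neighbors of $n$ is connected---obtained by relativizing to $N(n)$ the connectivity formula $\chi(x,y)$ of the earlier theorem; (ii) within $N(n)$, exactly one vertex has degree $1$ and every other vertex has degree $2$; and (iii) every vertex of the graph that is neither $n$ nor a neighbor of $n$ is adjacent to exactly one neighbor of $n$, and distinct such vertices are adjacent to distinct neighbors of $n$. Clause (iii) just says that the edge relation realizes an injection of the ``outside'' vertices into $N(n)$; all three clauses are first-order over $\mathcal{L}_\sim$ once $\chi$ is available, so $\sigma\in\mathcal{L}_\sim^{\possible}$, with a single outermost $\possible$ and no modal operator under a quantifier beyond those inside $\chi$.

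For the forward direction, if $G\satisfies\sigma$ as witnessed by an extension $G\sqof G'$ and a vertex $n\in G'$, then by the lemma $N(n)$ is countable, so $\{n\}\cup N(n)$ is countable, and clause (iii) injects all remaining vertices of $G'$ into $N(n)$; hence $G'$, and therefore its subgraph $G$, is countable. For the converse, suppose $G$ is countable and fix an injection $f$ of its vertices into $\omega$; extend $G$ to $G'$ by adjoining a fresh $\omega$-ray $r_0\sim r_1\sim r_2\sim\cdots$ and a fresh vertex $n$ adjacent to exactly the $r_i$, and by adding, for each vertex $v$ of $G$, the single edge $v\sim r_{f(v)}$ and nothing else. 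Then $G\sqof G'$, and in $G'$ the set $N(n)$ is exactly the ray, which induces a connected graph with $r_0$ of degree $1$ and every other $r_i$ of degree $2$; the vertices that are neither $n$ nor neighbors of $n$ are precisely the vertices of $G$, and $v\mapsto r_{f(v)}$ is the injection demanded by (iii). Thus $G'$ satisfies the matrix of $\sigma$, and so $G\satisfies\sigma$.

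I do not expect a genuine obstacle. The one subtlety---exactly as in the finiteness theorem---is that the ``$\possible$'' permits $G'$ to be a proper extension of $G$, so clause (iii) really counts the vertices of $G'$ rather than of $G$; but a graph is countable if and only if it has a countable extension, and the converse direction is free to adjoin the entire ray even when $G$ is finite, so nothing goes wrong. A minor bookkeeping point is to verify that clauses (i)--(iii) are faithfully relativized to $N(n)$ and that the injectivity clause in (iii) is expressible without placing a quantifier over a modal operator, but this is routine given the connectivity formula $\chi$.
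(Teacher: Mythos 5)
Your proof is correct and follows essentially the same route as the paper: an auxiliary vertex whose neighbor set is a connected chain with a unique degree-one start node and all other degrees two (hence an $\omega$-ray), together with an injection of the remaining vertices into that neighbor set. The explicit handshake/structure lemma and the remark about the ``possibly'' operator enlarging the graph are just more detailed versions of what the paper states informally.
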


\begin{proof}
A graph $G$ is countable if and only if possibly, there is a point $\omega$, whose neighbor graph is connected, with all vertices of degree $2$ except exactly one starting node with degree $1$ (amongst the neighbors of $\omega$)---so that these neighbor vertices form an infinite linked chain from the starting node---and furthermore, all other nodes in the graph are adjacent to distinct neighbors of $\omega$.
$$\begin{tikzpicture}
  \node[dot,label=below left:$G$] (d) at (0,.5) {};
  \node[dot] (e) at (1,1) {};
  \node[dot] (f) at (2,0) {};
  \node[dot] (g) at (2.5,.5) {};
  \node[dot] (h) at (3,.5) {};
  \node[dot] (i) at (3.5,.25) {};
  \node[dot] (j) at (4,.5) {};
  \node at (4.5,.5) {$\cdots$};
  \draw[thick] (f.center) -- (d.center) -- (e.center) -- (f.center) -- (g.center) -- (h.center) -- (i.center) -- (j.center);
  \draw (5.5,.5) node[scale=2,violet] {$\to$};
\begin{scope}[xshift=7cm]
  \node[dot,label=below left:$G$] (d) at (0,.5) {};
  \node[dot] (e) at (1,1) {};
  \node[dot] (f) at (2,0) {};
  \node[dot] (g) at (2.5,.5) {};
  \node[dot] (h) at (3,.5) {};
  \node[dot] (i) at (3.5,.25) {};
  \node[dot] (j) at (4,.5) {};
  \node at (4.5,.5) {$\cdots$};
  \draw[thick] (f.center) -- (d.center) -- (e.center) -- (f.center) -- (g.center) -- (h.center) -- (i.center) -- (j.center);
  \begin{scope}[red]
  \node[blue,dot,label={[blue]left:$\omega$}] (omega) at (.25,3) {};
  \foreach \k in {0,...,6} {\node[blue,dot] (\k) at (.5+.5*\k,2) {};}
  \draw[very thin,blue] \foreach \n in {.5,...,6.5} {(omega) arc (60:0:\n cm and 1.155cm)};
  \draw[blue,every label/.style={scale=.7}] (0) node[label={[blue]left:start}] {} -- (1) -- (2) -- (3) -- (4) -- (5) -- (6) -- ++(.3,0) +(.5,0) node {$\cdots$};
  \draw[very thin,out=90,in=-90] (d) to (0)
                             (e) to (1)
                             (f) to (2)
                             (g) to (3)
                             (h) to (4)
                             (i) to (5)
                             (j) to (6);
  \end{scope}
\end{scope}
\end{tikzpicture}$$
Note that the ``possibly'' clause includes the finite graphs.
\end{proof}

One can express ``countably infinite'' by saying that the graph is countable but not finite. And uncountability is expressible simply as ``not countable.'' Let us extend this by showing also that having size at most continuum is expressible.

\begin{theorem}\label{Theorem.Size-continuum-is-expressible}
In the class of graphs, the property of having size at most continuum $\continuum$ is expressible in the modal language of graph theory. There is a sentence $\psi_{\smallleq\continuum}\in\mathcal{L}_\sim^{\possible}$ such that for any graph $G$,
 $$G\satisfies\psi_{\smallleq\continuum}\qquad\text{ if and only if }\qquad |G|\leq\continuum.$$
\end{theorem}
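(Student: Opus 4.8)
The plan is to adapt the coding strategy behind Theorem~\ref{Theorem.Countability-is-expressible}. There a graph of size at most $\aleph_0$ was certified by injecting it into a copy of $\N$ realized as the neighbor-ray of a fresh vertex; here I would instead arrange an injection of $G$ into the power set $P(\N)$, which is precisely what $|G|\le\continuum$ means. Concretely, let $\psi_{\smallleq\continuum}$ say: possibly there is a vertex $n$ such that (i) the subgraph induced on the neighbors of $n$ is connected, and among those neighbors exactly one has degree $1$ within the neighbor set while all the others have degree $2$; and (ii) any two vertices, each of which is neither $n$ nor a neighbor of $n$, that are adjacent to exactly the same neighbors of $n$ must be equal. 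The degree conditions and clause (ii) are first-order with $n$ as a parameter, and the connectedness of the neighbor set is expressible by relativizing the connectivity formula $\chi$ (as remarked after that theorem). So $\psi_{\smallleq\continuum}\in\mathcal L_\sim^{\possible}$; as with finiteness and countability, it places quantifiers under modal operators and so does not belong to $\possible\mathcal L_\sim$.

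For the verification, note first that clause (i) pins $N(n)$, the neighbor set of $n$, down to isomorphism as a ray $c_0\sim c_1\sim c_2\sim\cdots$. Indeed $N(n)$ cannot be finite, since a finite graph with one vertex of degree $1$ and all others of degree $2$ would have odd degree sum; and a connected infinite graph of maximum degree $2$ is a ray or a double ray, the latter having no vertex of degree $1$. Hence $N(n)$ is countably infinite, and clause (ii) asserts that the map sending each vertex $v$ lying outside the ``scaffolding'' $\{n\}\cup N(n)$ to its set of neighbors inside $N(n)$ is injective into $P(N(n))$, a set of size $\continuum$. Consequently the witnessing extension has at most $\continuum+\aleph_0=\continuum$ vertices, so $G$, being a subgraph of it, has size at most $\continuum$.

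For the converse, assume $|G|\le\continuum$ and fix an injection $f\colon G\to P(\N)$. Build the extension by adjoining a new vertex $n$ and new vertices $c_0,c_1,c_2,\dots$, with edges $c_k\sim c_{k+1}$ and $n\sim c_k$ for every $k$, and, for each original vertex $v$, the edges $v\sim c_k$ for exactly the $k\in f(v)$ --- and no other new edges. Then the neighbors of $n$ are precisely the $c_k$, the subgraph they induce is the ray, and the only vertices off $\{n\}\cup\{c_k\}$ are the original ones; since $f$ is injective, distinct original vertices have distinct neighborhoods among the $c_k$. Thus (i) and (ii) hold in this extension, and therefore $G\satisfies\psi_{\smallleq\continuum}$.

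The step that needs the most care is the formulation of clause (ii): it must be a blanket injectivity requirement on \emph{all} vertices of an arbitrary witnessing extension that lie off the scaffolding $\{n\}\cup N(n)$, not merely on the vertices of $G$ --- indeed a restriction to the ``original'' vertices is not even expressible, since an extension has no way to recognize which of its vertices came from $G$. Only the blanket form makes the counting argument bound the size of the whole extension, which is what lets the bound descend to $G$. The remaining matters --- writing ``degree $2$ within the neighbor set'' and ``connected'' as honest formulas, and confirming that the ray is genuinely pinned down --- are routine given the earlier theorems.
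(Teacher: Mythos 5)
Your proposal is correct and follows essentially the same route as the paper: both certify $|G|\leq\continuum$ by possibly adjoining a fresh vertex whose neighbor set is forced (via connectedness plus the one-degree-$1$/rest-degree-$2$ condition) to be a countable ray, and then requiring that all vertices off that scaffolding have pairwise distinct neighborhoods within the ray, i.e.\ inject into $P(\N)$. Your added details---the parity argument pinning the neighbor set down to an infinite ray, and the emphasis that the injectivity clause must apply to \emph{all} non-scaffolding vertices of the witnessing extension---are correct elaborations of exactly the construction the paper gives.
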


\begin{proof}
A graph $G$ has size at most continuum if and only if we can associate every node in the graph with a distinct subset of $\omega$. This is expressible in the modal language of graph theory as follows. The sentence $\psi_{\smallleq\continuum}$ asserts that possibly, there is a node $\omega$ whose neighbors form a connected subgraph in which every node has degree $2$ except one initial node of degree $1$ (within that neighbor graph), such that all other nodes are adjacent to distinct subsets of the neighbors of $\omega$. In other words, for any two distinct nodes $x$ and $y$ not among $\omega$ and its neighbors, there is a neighbor $n$ of $\omega$ such that $x$ is adjacent to $n$ if and only if $y$ is not adjacent to $n$, as indicated in the figure here:
$$\begin{tikzpicture}[every label/.style={scale=.7}]
  \node[dot=1pt] (A) at (1,.5) {};
  \node[dot=1pt] (B) at (2,1) {};
  \node[dot=1pt] (C) at (3,0) {};
  \node[dot=1pt,label=above right:$x$] (D) at (4,1) {};
  \node[dot=1pt,label=above right:$y$] (E) at (5.5,1) {};
  \node[dot=1pt] (F) at (7,0) {};
  \node[dot=1pt] (G) at (9,1) {};
  \node[dot=1pt,blue,label={[blue]left:$\omega$}] (omega) at (-.25,4) {};
  \draw[very thick] (C.center) -- (A.center) -- (B.center) -- (C.center) -- (D.center) -- (E.center) -- (F.center) -- (G.center);
 \draw[Blue,thick] (0,3) node[dot,label={[scale=.7]above right:$0$}] (0) {}
  \foreach \n in {1,...,10} {-- ++(1,0) node[dot,label={[scale=.7]above right:$\n$}] (\n) {}} -- ++(.5,0) +(.5,0) node {$\cdots$};
 \draw[label distance=-6pt] (6) node[label=below right:$n$] {};
\begin{pgfonlayer}{background}
  \draw[blue,very thin] \foreach \n in {.25,...,10.25} {(omega) arc (60:0:2*\n cm and 1*1.155cm)};
  \foreach \anc/\clr/\nn in {A/Orange/{0,1,2,3}, B/Sepia/{1,3,4}, C/BrickRed/{2,5}, D/RoyalBlue/{3,4,6,7}, E/OliveGreen/{4,5,7}, F/Red/{5,7,8,9}, G/RawSienna/{7,8,10}}
  {\draw[\clr,very thin] \foreach \n in \nn {(\anc) to[out=90,in=-90] (\n)};}
\end{pgfonlayer}
\end{tikzpicture}$$
In this way, every node of the original graph $G$ is associated with a distinct set of the neighbors of $\omega$, and so $G$ will have size at most continuum.
\end{proof}

\begin{theorem}
In the class of graphs, the property of having size exactly continuum $\continuum$ is expressible in the modal language of graph theory. There is a sentence $\psi_\continuum\in\mathcal{L}_\sim^{\possible}$ such that for any graph $G$,
 $$G\satisfies\psi_\continuum\qquad\text{ if and only if }\qquad |G|=\continuum.$$
\end{theorem}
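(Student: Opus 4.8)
The plan is to write $\psi_\continuum$ as a conjunction $\psi_{\smallleq\continuum}\wedge\psi_{\geq\continuum}$ and to lean on the previous theorem for the first conjunct, since a graph has size exactly $\continuum$ precisely when it has size at most $\continuum$ and size at least $\continuum$. So the real task is to produce a sentence $\psi_{\geq\continuum}\in\mathcal{L}_\sim^{\possible}$ expressing $|G|\geq\continuum$.

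For $\psi_{\geq\continuum}$ the idea is to witness a bijection of the graph with $\mathcal{P}(\omega)$. As in the proofs above, one possibly adjoins a vertex $\omega$ whose neighbours form a chain coding the natural numbers, so that the set $N$ of neighbours of $\omega$ is a copy of $\omega$ and every off-apparatus vertex acquires an adjacency pattern $S\of N$. On top of this one wants to assert that the pattern map from off-apparatus vertices to $\mathcal{P}(N)$ is \emph{surjective}---every subset of $N$ occurs as a pattern---and \emph{injective}---distinct off-apparatus vertices have distinct patterns (this last clause is verbatim the one used for $\psi_{\smallleq\continuum}$). Injectivity together with surjectivity makes the off-apparatus vertices biject with $\mathcal{P}(N)$, of which there are $\continuum$ many, whence $|G|\geq\continuum$; and conversely a graph of size $\continuum$ can be extended by adjoining just such a coding chain and wiring its original vertices bijectively to the subsets of $N$.

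I expect the surjectivity clause to be the crux, for two reasons. First, ``every subset of $N$ is realised'' is on its face a second-order assertion; I would try to render it first-order by instead demanding that the family of realised patterns contain the empty set, be closed under flipping a single coordinate, and---this forced through a nested necessity, with $\omega$-indexed families of patterns coded by auxiliary vertices in a further extension---be closed under countable unions, which together pin the family down to all of $\mathcal{P}(N)$. Second, and more seriously, a bare possibility statement of this form is satisfied by \emph{every} graph, since one can always extend by throwing in $\continuum$ many fresh vertices realising all the patterns; so the extension must somehow be forced to be conservative, adjoining only the countable coding chain, so that the continuum many pattern-realising vertices are vertices of $G$ itself and not of the extension. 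I would attempt this by insisting that every vertex of the extension be either a node of the coding chain or an off-apparatus vertex sitting injectively under the pattern map, together with a necessity clause guaranteeing that no further subset of $N$ can be realised anew in any extension---the point being that this blocks the adversary from padding with fresh realising vertices. Getting this bookkeeping to validate \emph{both} directions of the equivalence, rather than merely one, is where the work lies, and it is exactly the place where the absence of an actuality operator in this language bites.
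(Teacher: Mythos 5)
Your decomposition into $\psi_{\smallleq\continuum}\wedge\psi_{\geq\continuum}$ matches the paper's, and you have correctly located the crux: a bare $\possible$-statement asserting a bijection of the graph with $\mathcal{P}(\omega)$ is satisfied by \emph{every} graph, because the witnessing extension can be padded with continuum many fresh pattern-realising vertices. But your proposed repair does not close this gap. Requiring that every vertex of the extension be either part of the coding apparatus or an off-apparatus vertex sitting injectively under the pattern map is a condition that the padded extension of a countable $G$ satisfies perfectly well---the fresh vertices are themselves legitimate off-apparatus vertices with distinct patterns---and your necessity clause forbidding new patterns from being realised in further extensions only enforces surjectivity; it does nothing to distinguish original vertices from padding. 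So as written $\psi_{\geq\continuum}$ would hold in every graph and the forward direction of the equivalence fails. (Surjectivity itself, by contrast, is not the hard part: one can express ``every subset of $N$ is realised'' directly by a necessity---if some pattern were missing, one could extend by a node realising it---with no need for your closure-under-countable-unions machinery.)

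The missing idea is the paper's capping construction, which inverts the quantifier structure. On top of $\psi_{\smallleq\continuum}$ one asserts that \emph{necessarily}, whenever the graph consists of a single node $A$ together with its neighbours (so that the neighbours of $A$ contain all of $G$), and \emph{necessarily}, whenever a second component consisting of a node $B$ with its neighbours is adjoined so that the union still has size at most $\continuum$ and is injectively associated with subsets of $\omega$, then \emph{possibly}, in a further extension preserving all of that, the neighbours of $A$ are associated with the subsets of a fresh copy of $\omega$ so that every pattern is realised. The adversary may choose $B$ to have continuum many neighbours exhausting all the patterns; preserving the injective association then freezes the neighbour set of $A$, so no padding of $A$'s neighbours is possible in the final step, and the concluding $\possible$ succeeds exactly when $A$ already had $\continuum$ many neighbours---which, given the earlier clauses, happens exactly when $|G|=\continuum$. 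Without some device of this kind (or the actuality operator of section \ref{Section.Actuality}) your sentence cannot tell the original vertices from the new ones.
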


\begin{proof}
This is a little subtler than one might expect---the difficulty is that when one uses the possibility operator, new vertices can be added to the original graph, and there is no way of telling which points are new and which are original. (This is the main point of the actuality operator @ discussed in section \ref{Section.Actuality}.) Nevertheless, we can express the property as follows. A graph $G$ has size continuum if and only if it has size at most continuum and necessarily, if the graph consists of a node $A$ and its neighbors (as in red below), then necessarily, in any extension having two connected components (the other being a node $B$ and its neighbors, as in brownish red), such that the union has size at most continuum, then necessarily, in any extension in which that is exhibited by an association as above of nodes with subsets of $\omega$, then possibly, in a further extension in which that remains true, there is another copy of $\omega$ and a new association of the neighbors of $A$ with distinct subsets of it, in such a way that every pattern for that copy of $\omega$ is realized by a node adjacent to $A$.
$$\begin{tikzpicture}[every label/.style={scale=.7}]
  \node[dot=1pt] (A) at (1,.5) {};
  \node[dot=1pt,label={[label distance=1.5ex]below:$G$}] (B) at (2,1) {};
  \node[dot=1pt] (C) at (3,0) {};
  \node[dot=1pt] (D) at (3.5,.5) {};
  \node[dot=1pt] (E) at (4,.25) {};
  \draw[very thick] (C.center) -- (A.center) -- (B.center) -- (C.center) -- (D.center) -- (E.center);
  \draw[red] (2.5,2) node[dot,label=above:$A$] (AA) {}
       \foreach \pp [count=\i from 0] in {A,B,C,D,E}{ (AA) to[out=210+30*\i,in=90] (\pp)};
 \draw[RawSienna]
  (5,.75) node[dot] (a) {}
  ++(1,-.5) node[dot] (b) {}
  ++(.5,.25) node[dot] (c) {}
  ++(1,-.5) node[dot] (d) {}
  ++(.5,.25) node[dot] (e) {}
  (a.center) -- (b.center) -- (c.center) -- (d.center) -- (e.center) -- (c.center)
  (c) +(.25,1.25) node[dot,label=above:$B$] (BB) {}
  \foreach \pp [count=\i from 0] in {a,b,c,d,e}{ (BB) to[thin,out=210+30*\i,in=90] (\pp)};
\end{tikzpicture}$$
The point is that if $G$ does have size continuum, then we can find the second desired bijection with the subsets of $\omega$. And if $G$ does not have size continuum, then we may extend $G$ by adding a node $A$ adjacent to every node in $G$ and then add a new node $B$ along with continuum many neighbors, associating these with subsets of $\omega$ in such a way so as to use up all the possible subsets---this will prevent $A$ from getting any new neighbors in an extension in which the neighbors of $A$ and $B$ are still associated with distinct subsets, since all the patterns are used up. And since there are fewer than continuum many neighbors of $A$, they cannot make an association with subsets of a second copy of $\omega$ in such a way that every pattern is realized.
\end{proof}

We have made a good start in this section on illustrating the expressive power of modal graph theory, but we shall actually extend these results much further in section \ref{Section.Interpretative-power-of-modal-graph-theory}, showing that having size $\aleph_\omega$ or $\beth_\omega$ is expressible, as is having size the first $\beth$-fixed point or the next or indeed the first $\beth$-hyperfixed-point and much more.

\section{Some elementary modal model theory}\label{Section.Elementary-modal-model-theory}

Let us now begin to develop some of the more general elementary theory of modal model theory, working in $\Mod(T)$ for an arbitrary first-order theory $T$, beginning with the fact that $\mathcal{L}$-isomorphisms preserve $\mathcal{L}^{\possible}$ truths.

\newtheorem{renaminglemma}[theorem]{Renaming lemma}%
\begin{renaminglemma}
For any first-order theory $T$, isomorphisms of models of $T$ as $\mathcal{L}$-structures preserve $\mathcal{L}^{\possible}$ truth in $\Mod(T)$. That is, if $\pi:M\cong N$ is an isomorphism of models of $T$, then for any modal assertion $\varphi\in\mathcal{L}^{\possible}$, we have
 $$M\satisfies\varphi[a]\qquad\Iff\qquad N\satisfies\varphi[\pi(a)].$$
\end{renaminglemma}

\begin{proof}
The claim of the lemma is clearly true for assertions $\varphi$ in the base language $\mathcal{L}$. And it is clearly preserved by Boolean combinations and also quantifiers. What remains is to check the preservation by modal operators. But any extension of $M$ can be translated to a corresponding isomorphic extension of $N$, preserving and extending the isomorphism. And so by induction the lemma holds for all assertions~$\varphi$.
\end{proof}

Next, we show that $\mathcal{L}$-elementarity is the same as $\possible\mathcal{L}$-elementarity. Observation \ref{Observation.Key-lemma-fails-for-Lpossible} will show that this isn't generally true for $\mathcal{L}^{\possible}$-elementarity.

\begin{keylemma}In the class of models $\Mod(T)$ of a first-order theory $T$,
$$M\elesub_{\mathcal{L}} N\qquad\text{if and only if}\qquad M\elesub_{\possible\mathcal{L}} N.$$
\end{keylemma}

\begin{proof}
We shall show by induction on formulas $\varphi$ that if $M\elesub_{\mathcal{L}} N$, then
 $$M\satisfies\varphi[a]\quad\Iff\quad N\satisfies\varphi[a]$$
for every $\varphi\in\possible\mathcal{L}$, for all models. This equivalence is immediate when $\varphi$ is an assertion in the base language $\mathcal{L}$, and it is clearly preserved through Boolean combinations. All that remains for $\varphi\in\possible\mathcal{L}$ is preservation through modal operators. The downward preservation of possibility is immediate, since if $N\satisfies\possible\varphi[a]$, then also $M\satisfies\possible\varphi[a]$, because $M\of N$ and so any extension of $N$ also extends $M$. To show the upward preservation of possibility, suppose that $M\satisfies\possible\varphi$. So there is an extension $M\of H$ with $H\satisfies\varphi$. Consider the theory $\Delta(H)\union\Delta_0(N)$ consisting of the elementary diagram of $H$ together with the atomic diagram of $N$. That is, we take every $\mathcal{L}$-assertion $\varphi(a_0,\dots,a_n)$ true in $H$, using constants for every element of $H$, together with the atomic and negated atomic truths $\varphi_0(b_0,\dots,b_m)$ of $N$, again using constants for every element of $N$. This theory is finitely consistent, we claim, using the fact that $M\elesub N$. To see this, observe that if $\varphi(a_0,\dots,a_n)$ is true in $H$ and $\varphi_0(b_0,\dots,b_m)$ is true in $N$, where $\varphi_0$ is a conjunction of atomic and negated atomic assertions, then $M\satisfies\exists\bar x\,\varphi_0(\bar x)$ and so we may interpret the constants $b_i$ inside $M$ and hence inside $H$, thereby realizing the conjunction $\varphi(\bar a)\wedge\varphi_0(\bar b)$ in $H$. So the theory is consistent, and therefore we have a model $H'\satisfies\Delta(H)\union\Delta_0(N)$. It follows that $H'$ is an elementary extension of an embedded copy of $H$, and a direct extension of a copy of $N$. We may assume $H\elesub_{\mathcal{L}} H'$, and so by the induction hypothesis we know $H'\satisfies\varphi[a]$. Since $N$ embeds into $H'$ while fixing $a$, we therefore also have $N\satisfies\possible\varphi[a]$.
\end{proof}

We can use the key lemma to prove a version of it for mere elementary equivalence, as follows:

\begin{theorem}\label{Theorem.L-theory-determines-possibleL-theory}
In the class of models $\Mod(T)$ of a first-order theory $T$ in language $\mathcal{L}$, the $\mathcal{L}$-theory of a model determines its $\possible\mathcal{L}$-theory:
$$M\eleequiv_{\mathcal{L}} N\qquad\text{if and only if}\qquad M\eleequiv_{\possible\mathcal{L}} N.$$
\end{theorem}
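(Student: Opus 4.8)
The plan is to bootstrap directly from the Key Lemma by amalgamating $M$ and $N$ into a common elementary extension. The forward implication is immediate, since $\mathcal{L}\subseteq\possible\mathcal{L}$, so agreement on all $\possible\mathcal{L}$ sentences entails agreement on all $\mathcal{L}$ sentences.

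For the reverse, suppose $M\eleequiv_{\mathcal{L}} N$, with $M,N\in\Mod(T)$. First I would check that the theory $\Delta(M)\union\Delta(N)$ --- the union of the elementary diagrams of $M$ and of $N$, taken in disjoint sets of new constant symbols over the common language $\mathcal{L}$ --- is consistent. This is a routine compactness argument: a finite fragment amounts to realizing a formula $\varphi(\bar a)\wedge\psi(\bar b)$, where $\bar a$ names elements of $M$ and $\bar b$ names elements of $N$; since $M\satisfies\exists\bar x\,\varphi(\bar x)$ and, using $M\eleequiv_{\mathcal{L}}N$, also $M\satisfies\exists\bar y\,\psi(\bar y)$, we may realize both $\varphi$ and $\psi$ inside $M$ and thereby find a model of the fragment. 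A model $H$ of $\Delta(M)\union\Delta(N)$ then contains elementary copies $M^*\iso M$ and $N^*\iso N$ with $M^*\elesub_{\mathcal{L}} H$ and $N^*\elesub_{\mathcal{L}} H$; moreover $H\satisfies\Delta(M)\supseteq\Th(M)\supseteq T$, so $H\in\Mod(T)$ and the Key Lemma is applicable inside this potentialist system.

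Now I would chain three transfers of $\possible\mathcal{L}$ truth. By the Renaming lemma, $M\eleequiv_{\possible\mathcal{L}} M^*$ and $N\eleequiv_{\possible\mathcal{L}} N^*$, since these pairs are $\mathcal{L}$-isomorphic. By the Key Lemma applied to $M^*\elesub_{\mathcal{L}} H$ and to $N^*\elesub_{\mathcal{L}} H$, we obtain $M^*\elesub_{\possible\mathcal{L}} H$ and $N^*\elesub_{\possible\mathcal{L}} H$, in particular agreement on all $\possible\mathcal{L}$ sentences. Composing, $M\eleequiv_{\possible\mathcal{L}} H\eleequiv_{\possible\mathcal{L}} N$, as desired. (An alternative route would pass through a common ultrapower using the Keisler--Shelah isomorphism theorem, combining it with the Key Lemma and the Renaming lemma in the same way, but the amalgamation argument is more elementary and self-contained.)

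Since every step apart from the consistency check is pure bookkeeping, and the consistency check is a standard compactness argument, I do not anticipate a genuine obstacle. The one place to be slightly careful is confirming that the amalgam $H$ genuinely lies in $\Mod(T)$, so that the Key Lemma --- which is stated relative to the potentialist system $\Mod(T)$ --- actually applies; this is exactly what the inclusion $T\subseteq\Th(M)\subseteq\Delta(M)$ guarantees.
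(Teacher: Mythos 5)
Your proposal is correct and follows essentially the same route as the paper: the paper likewise obtains a common $\mathcal{L}$-elementary extension of $M$ and $N$ (citing the standard amalgamation fact rather than spelling out the diagram-union compactness argument) and then transfers $\possible\mathcal{L}$-equivalence through it via the Key Lemma and the Renaming Lemma. The only difference is that you make the construction of the common extension explicit, which the paper leaves as a known classical fact.
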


\begin{proof}
Suppose that $M\eleequiv_{\mathcal{L}} N$ for models of $T$. It follows that the two models have a common elementary extension, a model $M^+$ into which both $M$ and $N$ embed $\mathcal{L}$-elementarily. By the key lemma and the renaming lemma, both of these embeddings are also $\possible\mathcal{L}$-elementary, and so the $\possible\mathcal{L}$-theories of $M$ and $N$ must be the same.
\end{proof}

\begin{corollary}\label{Corollary.Complete-theory}
If $T$ is a complete first-order theory in language $\mathcal{L}$, then in $\Mod(T)$ every $\possible\mathcal{L}$ sentence $\sigma$ is either true in all models of $T$ or none. In this case every true $\possible\mathcal{L}$ sentence $\sigma$ is necessarily true: $\sigma\iff\necessary\sigma$.
\end{corollary}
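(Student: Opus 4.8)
The plan is to read this off almost immediately from Theorem \ref{Theorem.L-theory-determines-possibleL-theory}. First I would note that completeness of $T$ means any two models $M,N\satisfies T$ agree on all $\mathcal{L}$-sentences, i.e. $M\eleequiv_{\mathcal{L}} N$. By Theorem \ref{Theorem.L-theory-determines-possibleL-theory}, it then follows that $M\eleequiv_{\possible\mathcal{L}} N$, so all models of $T$ agree on every $\possible\mathcal{L}$ sentence. Hence for each $\sigma\in\possible\mathcal{L}$, either $\sigma$ holds in every model of $T$ or in none, which is the first assertion.

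For the second assertion, fix $\sigma\in\possible\mathcal{L}$ and an arbitrary model $M\satisfies T$; I would show the biconditional $\sigma\iff\necessary\sigma$ holds in $M$ by splitting into the two cases just established. If $\sigma$ is true in every model of $T$, then in particular $M\satisfies\sigma$, and moreover every extension $M\sqof N$ within $\Mod(T)$ is again a model of $T$, so $N\satisfies\sigma$; thus $M\satisfies\necessary\sigma$ as well, and both sides of the biconditional are true. If instead $\sigma$ is true in no model of $T$, then $M\not\satisfies\sigma$, and since the extension relation refines (the reflexive) substructure relation we have $M\sqof M$, so $M\not\satisfies\necessary\sigma$ either; both sides are false. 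In either case $M\satisfies\sigma\iff\necessary\sigma$.

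There is essentially no obstacle: the whole content is carried by Theorem \ref{Theorem.L-theory-determines-possibleL-theory}, and the only minor points are the reflexivity of $\sqof$ (giving the implication $\necessary\sigma\to\sigma$ in each model) and the closure of $\Mod(T)$ under the extension relation (so that $\sigma$ holding throughout $\Mod(T)$ forces $\necessary\sigma$ from any starting model).
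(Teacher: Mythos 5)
Your proof is correct and follows the same route as the paper: both derive the zero–one behaviour of $\possible\mathcal{L}$ sentences directly from Theorem \ref{Theorem.L-theory-determines-possibleL-theory} via $\mathcal{L}$-elementary equivalence of all models of a complete theory. The only difference is that you spell out the second clause ($\sigma\iff\necessary\sigma$, using reflexivity of the extension relation and the fact that extensions within $\Mod(T)$ are again models of $T$), which the paper treats as immediate.
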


\begin{proof}
If $T$ is complete, then all models in $\Mod(T)$ are $\mathcal{L}$-elementarily equivalent, and so by theorem \ref{Theorem.L-theory-determines-possibleL-theory} they are also $\possible\mathcal{L}$-elementarily equivalent. So any $\possible\mathcal{L}$ sentence either holds in all models of $T$ or none.
\end{proof}

We can prove a version of theorem \ref{Theorem.L-theory-determines-possibleL-theory} for types.

\begin{theorem}\label{Theorem.L-type-determines-possibleL-type}
In the class of models $\Mod(T)$ of a first-order theory $T$ in language $\mathcal{L}$, the $\mathcal{L}$-type of individuals $\bar a=(a_0,\dots,a_n)$ in a model $M$ determines the $\possible\mathcal{L}$-type of those individuals in that model.
\end{theorem}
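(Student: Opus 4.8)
The plan is to run the argument for theorem~\ref{Theorem.L-theory-determines-possibleL-theory} in the expanded language with constants for the parameters. Suppose $\bar a=(a_0,\dots,a_n)$ in $M$ and $\bar b=(b_0,\dots,b_n)$ in $N$ realize the same $\mathcal{L}$-type, where $M,N\satisfies T$. Treating $\bar a$ and $\bar b$ as the interpretations of a single fixed tuple $\bar c$ of new constant symbols, this says precisely that the expanded structures $(M,\bar a)$ and $(N,\bar b)$ are elementarily equivalent as $\mathcal{L}(\bar c)$-structures. First I would produce a common elementary extension of these two expansions: by a routine compactness argument, the union of the two elementary diagrams --- taken over disjoint pools of constants except for the shared constants $\bar c$ --- is finitely consistent exactly because the $\mathcal{L}$-types of $\bar a$ and $\bar b$ agree (any $\mathcal{L}(\bar c)$-sentence $\exists\bar y\,\theta(\bar c,\bar y)$ witnessing consistency of the $N$-part in $(N,\bar b)$ transfers to $(M,\bar a)$). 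This yields a model $(M^+,\bar c)$ together with $\mathcal{L}(\bar c)$-elementary embeddings of $(M,\bar a)$ and of $(N,\bar b)$ into it. Forgetting the constants, $M^+\satisfies T$ since it elementarily extends a copy of $M$, and we have $\mathcal{L}$-elementary embeddings $M\to M^+$ and $N\to M^+$ carrying the distinguished tuple ($\bar a$, respectively $\bar b$) onto the \emph{same} tuple $\bar c$ in $M^+$.

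Next I would upgrade these embeddings to the modal level. Replacing $M$ by its image, we may view $M$ as an $\mathcal{L}$-elementary submodel of $M^+$ with $\bar a$ mapping to $\bar c$; the key lemma then gives $M\elesub_{\possible\mathcal{L}} M^+$, and since $\possible\mathcal{L}$-elementarity of a submodel means agreement of $\possible\mathcal{L}$-truth \emph{with parameters}, instantiating at $\bar c$ gives $M\satisfies\varphi[\bar a]\iff M^+\satisfies\varphi[\bar c]$ for every $\varphi\in\possible\mathcal{L}$ --- the renaming lemma handling the identification of $M$ with its isomorphic image inside $M^+$. Thus $\tp_{\possible\mathcal{L}}^M(\bar a)=\tp_{\possible\mathcal{L}}^{M^+}(\bar c)$. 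The identical argument applied to $N$ gives $\tp_{\possible\mathcal{L}}^N(\bar b)=\tp_{\possible\mathcal{L}}^{M^+}(\bar c)$, and chaining the two equalities shows that $\bar a$ in $M$ and $\bar b$ in $N$ have the same $\possible\mathcal{L}$-type, as desired.

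I do not expect a genuine obstacle; this is the proof of theorem~\ref{Theorem.L-theory-determines-possibleL-theory} carried out over $\mathcal{L}(\bar c)$. The one point requiring care is keeping the parameters aligned: the common elementary extension must send both distinguished tuples to a common tuple, which is why the construction uses a shared block of constant symbols rather than two independent blocks. The other thing to be attentive to is that the key lemma is invoked at full strength --- as $\possible\mathcal{L}$-elementarity of a submodel, with parameters --- rather than merely as equivalence of $\possible\mathcal{L}$-sentences, since it is the parametrized version that delivers the statement about types.
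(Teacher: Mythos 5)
Your proof is correct, and it reaches the paper's conclusion by a somewhat different classical construction. Both arguments share the same modal skeleton---reduce everything to the key lemma ($\mathcal{L}$-elementarity implies $\possible\mathcal{L}$-elementarity, with parameters) together with the renaming lemma---but they differ in how the two tuples get identified inside one larger model. The paper passes to a sufficiently homogeneous elementary extension $M\elesub\overbar M$ and uses an automorphism of $\overbar M$ carrying $\bar a$ to $\bar b$; the renaming lemma transfers the $\possible\mathcal{L}$-type along that automorphism, and the key lemma transfers it down to $M$ at both ends. You instead amalgamate $(M,\bar a)$ and $(N,\bar b)$ elementarily over the shared constants $\bar c$ by compactness on the union of the elementary diagrams, so that the two tuples literally become the same tuple in the common extension, and then the key lemma and renaming lemma finish as you describe. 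Your route is more self-contained (it needs only compactness, not the existence of suitably homogeneous models, which itself requires a further construction) and it yields the slightly more general cross-model statement with $\bar a$ and $\bar b$ in different models of $T$, whereas the paper's statement and proof concern two tuples in the same model. Your finite-consistency check is the right one---the $N$-side of a finite fragment reduces to an existential $\mathcal{L}(\bar c)$-sentence, which transfers to $(M,\bar a)$ precisely because the $\mathcal{L}$-types agree---and you are also right that the key lemma must be invoked in its parametrized form; that is exactly how the paper uses it as well.
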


\begin{proof}
Suppose that $\bar a$ and $\bar b$ have the same $\mathcal{L}$-type in a model $M\satisfies T$. We may find an elementary extension $M\elesub \overbar M$ for which $\overbar M$ is sufficiently homogeneous, so that some automorphism of $\overbar M$ moves $\bar a$ to $\bar b$. By the key lemma, the $\possible\mathcal{L}$-type of $\bar a$ in $M$ is the same as in $\overbar M$, which by the renaming lemma is the same as the $\possible\mathcal{L}$-type of $\bar b$ in $\overbar M$, which by the key lemma again is the same as the $\possible\mathcal{L}$-type of $\bar b$ in $M$.
\end{proof}

Since theorem \ref{Theorem.L-theory-determines-possibleL-theory} shows that the $\possible\mathcal{L}$ theory of a model is determined completely by its $\mathcal{L}$-theory, one might take this to suggest that $\possible\mathcal{L}$ assertions are actually $\mathcal{L}$-expressible. But this isn't true in general, since we have already seen in theorem \ref{Theorem.2-colorability-is-expressible} that $\possible\mathcal{L}$ can express $2$-colorability of graphs, whereas this concept is not expressible in the base language. Nevertheless, the $2$-colorability of a graph is a consequence of the \emph{theory} of the graph, since it is equivalent to the nonexistence of odd-length cycles. In the general case what we get is the following:

\begin{theorem}\label{Theorem.Infinitary-disjunct-of-infinitary-conjuncts}
In the class $\Mod(T)$ of all models of a first-order theory $T$ in language $\mathcal{L}$, every $\possible\mathcal{L}$ formula $\varphi(x)$ is equivalent to an infinitary disjunction of infinitary conjunctions of $\mathcal{L}$-assertions.
\end{theorem}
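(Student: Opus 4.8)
The plan is to leverage theorem~\ref{Theorem.L-type-determines-possibleL-type}, which already tells us that the $\mathcal{L}$-type of a tuple determines its $\possible\mathcal{L}$-type, and then to package the ``good'' types into a single infinitary formula.

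The first step I would carry out is to upgrade theorem~\ref{Theorem.L-type-determines-possibleL-type} from a statement about two tuples inside one fixed model to a cross-model statement: if $\bar a$ realizes a complete $\mathcal{L}$-type $p(\bar x)$ in some model $M\satisfies T$ and $\bar b$ realizes the same type $p$ in some model $N\satisfies T$, then $\bar a$ in $M$ and $\bar b$ in $N$ satisfy exactly the same $\possible\mathcal{L}$ formulas. This I would prove exactly as theorem~\ref{Theorem.L-theory-determines-possibleL-theory} was proved from the key lemma: realizing the same type forces $M\eleequiv_{\mathcal{L}} N$, so by elementary amalgamation over the type there is a model $M^+\satisfies T$ together with $\mathcal{L}$-elementary embeddings $j\colon M\to M^+$ and $k\colon N\to M^+$ with $j(\bar a)=k(\bar b)$; the key lemma and the renaming lemma then make $j$ and $k$ also $\possible\mathcal{L}$-elementary, so the $\possible\mathcal{L}$-type of $\bar a$ in $M$ agrees with that of $j(\bar a)=k(\bar b)$ in $M^+$, which agrees with that of $\bar b$ in $N$.

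With this in hand, I would fix a $\possible\mathcal{L}$ formula $\varphi(\bar x)$, where $\bar x=(x_0,\dots,x_n)$, and declare a complete $\mathcal{L}$-type $p(\bar x)$ consistent with $T$ to be \emph{$\varphi$-good} if some---equivalently, by the previous step, every---realization $\bar a$ of $p$ in a model $M\satisfies T$ has $M\satisfies\varphi[\bar a]$. It then follows that $M\satisfies\varphi[\bar a]$ if and only if $\tp^M(\bar a)$ is $\varphi$-good, for every $M\satisfies T$ and every tuple $\bar a$ from $M$. Since a tuple $\bar a$ realizes a complete type $p$ in $M$ exactly when $M\satisfies\bigwedge_{\psi\in p}\psi[\bar a]$---the nontrivial direction using completeness of $p$, as a tuple satisfying all of $p$ cannot satisfy some $\theta$ with $\lnot\theta\in p$---this yields
$$M\satisfies\varphi[\bar a]\qquad\Iff\qquad M\satisfies\Bigl(\,\bigvee_{p\ \varphi\text{-good}}\ \bigwedge_{\psi\in p}\psi\,\Bigr)[\bar a],$$
which is the required equivalence of $\varphi(\bar x)$ with an infinitary disjunction of infinitary conjunctions of $\mathcal{L}$-assertions over $\Mod(T)$.

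The only step carrying real content is the cross-model promotion of theorem~\ref{Theorem.L-type-determines-possibleL-type} in the second paragraph; everything else is bookkeeping. The one subtlety to watch is that the construction remains within genuine, set-sized infinitary logic, which it does: there are only set-many complete $n$-types consistent with $T$, and each such type is a set of $\mathcal{L}$-formulas, so the displayed disjunction and conjunctions are legitimate.
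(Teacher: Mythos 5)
Your proof is correct and rests on the same core idea as the paper's: a complete $\mathcal{L}$-description of a tuple determines its $\possible\mathcal{L}$-behaviour, so $\varphi$ is the disjunction, over the ``good'' complete descriptions, of the conjunction of each description. The one place you diverge is in handling free variables: the paper converts the free variables into constant symbols, passes to the expanded language $\mathcal{L}^+$, and applies the sentence case there (where the complete $\mathcal{L}^+$-theories are exactly your complete types under the usual correspondence), whereas you stay in the original language and instead upgrade theorem~\ref{Theorem.L-type-determines-possibleL-type} to a cross-model statement via elementary amalgamation over a type, mirroring how theorem~\ref{Theorem.L-theory-determines-possibleL-theory} is derived from the key lemma. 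Your route is slightly more self-contained: the paper's language-expansion step quietly changes the potentialist system (extensions of $\mathcal{L}^+$-structures are not literally the same worlds as extensions of $\mathcal{L}$-structures, the issue theorem~\ref{Theorem.Expansions} is there to address), while your cross-model type lemma never leaves $\Mod_{\mathcal{L}}(T)$; the cost is that you must supply the amalgamation-over-a-type argument, which is standard by compactness on the two elementary diagrams with the constants for $\bar a$ and $\bar b$ identified. Your closing remarks on completeness of the types (so that satisfying $\bigwedge_{\psi\in p}\psi$ pins down the type) and on there being only set-many complete $n$-types are exactly the right points to check.
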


\begin{proof}
Let us first consider the case that $\varphi$ is a $\possible\mathcal{L}$ sentence. Consider all the various models $M\satisfies T$ in which $\varphi$ holds. The theorem shows that this depends only on the $\mathcal{L}$-theory of $M$, since any other model with that same theory must also satisfy $\varphi$. That is, $\varphi$ holds in a model $N$ if and only if the $\mathcal{L}$-theory of $N$ is one of the theories of a model in which $\varphi$ holds. Let $\mathcal{T}$ be the set of all the $\mathcal{L}$-theories $\overbar T$ that are the $\mathcal{L}$-theories of such a model $M$, one in which $T+\varphi$ holds. By theorem \ref{Theorem.L-theory-determines-possibleL-theory}, the sentence $\varphi$ is equivalent to the disjunction of these theories $\overbar T$, like this:
 $$\varphi\quad\Iff\quad\bigvee_{\overbar T\in\mathcal{T}}\,\bigwedge_{\psi\in \overbar T}\psi.$$
This is an infinitary disjunction of infinitary conjunctions of $\mathcal{L}$ sentences.

Now suppose that $\varphi(x)$ is a formula, with free variables. Let $\mathcal{L}^+$ be the language where $x$ is considered a constant symbol rather than a variable symbol. Every model of $T$ in $\mathcal{L}$ has diverse expansions to $\mathcal{L}^+$ simply by interpreting $x$ by an element. Evaluating a formula with a free variable $x$ under a valuation of that variable amounts to the same thing as interpreting the corresponding sentence in a model in which $x$ is regarded as a constant symbol. So the models of $\Mod(T)$ in $\mathcal{L}^+$ are the same thing as models in the language $\mathcal{L}$ under a valuation assigning $x$ to an individual of the model. But in $\Mod(T)$ considered in the language $\mathcal{L}^+$, we may apply the analysis of the previous paragraph to realize $\varphi(x)$ as an infinitary disjunction of infinitary conjunctions of $\mathcal{L}^+$-assertions. So $\varphi(x)$ is equivalent to a disjunction of conjunctions of formulas of $\mathcal{L}$.
\end{proof}

Perhaps part of what is going on in theorem \ref{Theorem.Infinitary-disjunct-of-infinitary-conjuncts} is the following:

\begin{theorem}\label{Theorem.Expansions}
If $T$ is a theory in a first-order language $\mathcal{L}$ having only nonempty models and $\mathcal{L}^+$ is an expansion of $\mathcal{L}$, then for any modal assertion $\varphi$ in $\mathcal{L}^{\possible}$, a model $M$ of $T$ satisfies $\varphi(a)$ in $\Mod_{\mathcal{L}}(T)$ if and only if every expansion $M^+$ of it also does so in $\Mod_{\mathcal{L}^+}(T)$.
\end{theorem}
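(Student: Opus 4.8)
The plan is to establish, by induction on the modal assertion $\varphi\in\mathcal{L}^{\possible}$, the apparently stronger claim that for \emph{every} $\mathcal{L}^+$-expansion $M^+$ of every model $M\satisfies T$ and every tuple $a$ from $M$,
$$M\satisfies\varphi[a]\text{ in }\Mod_{\mathcal{L}}(T)\qquad\Iff\qquad M^+\satisfies\varphi[a]\text{ in }\Mod_{\mathcal{L}^+}(T).$$
The theorem then drops out immediately: every model of $T$ has at least one $\mathcal{L}^+$-expansion --- interpret new relation and function symbols arbitrarily and new constant symbols by any element of the nonempty domain, using the hypothesis that $T$ has only nonempty models --- so the ``only if'' direction of the theorem is the forward implication of the biconditional applied to each expansion, while the ``if'' direction follows from the backward implication applied to any single expansion.

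The base case, with $\varphi$ a first-order $\mathcal{L}$-formula, is clear since $M$ and $M^+$ have the same domain and interpret $\mathcal{L}$ identically. Boolean combinations are immediate, and the quantifier case uses only the induction hypothesis together with $\dom M=\dom M^+$. Since $\mathcal{L}^{\possible}$ is closed under negation, for the modal step it suffices to treat $\possible$, the case of $\necessary$ following by rewriting $\necessary\psi$ as $\neg\possible\neg\psi$.

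So let $\varphi=\possible\psi$. For the forward direction, suppose $M\satisfies\possible\psi[a]$ in $\Mod_{\mathcal{L}}(T)$, witnessed by an $\mathcal{L}$-extension $M\sqof N$ with $N\satisfies\psi[a]$. The idea is to produce an $\mathcal{L}^+$-expansion $N^+$ of $N$ that is simultaneously an $\mathcal{L}^+$-extension of $M^+$: on the elements of $M$ the new symbols are interpreted exactly as in $M^+$; a new relation symbol receives no tuples beyond those of $M^+$; a new constant keeps its $M^+$-value; and a new function symbol is extended to inputs involving some element of $N\setminus M$ by sending them to a fixed element of $N$, which exists because $N\supseteq M\neq\emptyset$. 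Then $M^+\sqof N^+$ as $\mathcal{L}^+$-structures, and $N^+\satisfies T$ because expanding the language does not affect satisfaction of the $\mathcal{L}$-sentences of $T$; hence $N^+\in\Mod_{\mathcal{L}^+}(T)$, the induction hypothesis yields $N^+\satisfies\psi[a]$, and so $M^+\satisfies\possible\psi[a]$. Conversely, if $M^+\satisfies\possible\psi[a]$ in $\Mod_{\mathcal{L}^+}(T)$, witnessed by $M^+\sqof N^+$ with $N^+\satisfies\psi[a]$, then the $\mathcal{L}$-reduct $N$ of $N^+$ satisfies $M\sqof N$ (a substructure in $\mathcal{L}^+$ is certainly a substructure in $\mathcal{L}$) and $N\satisfies T$, so the induction hypothesis gives $N\satisfies\psi[a]$ and therefore $M\satisfies\possible\psi[a]$.

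I expect the only genuinely nontrivial point to be the construction of $N^+$ in the forward modal step, specifically the assignment of values to the new function symbols on tuples involving the fresh elements of $N$. This is exactly where the nonemptiness hypothesis is needed --- it is also what makes $M^+$ available in the first place, for the new constant symbols --- and everything else is routine bookkeeping about how reducts and expansions interact with the substructure relation.
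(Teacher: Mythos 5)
Your proof is correct and follows essentially the same route as the paper: an induction on $\varphi$ establishing the biconditional $M\satisfies\varphi[a]\Iff M^+\satisfies\varphi[a]$ for every expansion $M^+$, with the modal step handled by expanding the witnessing extension $N$ to an $\mathcal{L}^+$-structure $N^+\fo M^+$ in one direction and taking reducts in the other. Your added detail about how to interpret the new function and constant symbols on fresh elements (and where nonemptiness is used) just makes explicit what the paper compresses into ``interpreting the extra structure first on $M$ as in $M^+$, and then interpreting it on the rest of $N$.''
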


In this theorem, we have two different versions of the potentialist system $\Mod(T)$, depending on whether you consider $\mathcal{L}$-structures or $\mathcal{L}^+$-structures. The point is that the two systems have the same modal truths for assertions in the unexpanded modal language $\mathcal{L}^{\possible}$.

\begin{proof}
Every $\mathcal{L}$-model $M\satisfies T$ has expansions to the language $\mathcal{L}^+$, and what we claim is that if $M^+$ is an expansion of $M$ to the language $\mathcal{L}^+$, then
 $$M\satisfies\varphi(a)\quad\Iff\quad M^+\satisfies\varphi(a).$$
This we prove by induction on $\varphi$. The claim is clear for $\mathcal{L}$ assertions, since $\varphi$ does not mention any of the new vocabulary of $\mathcal{L}^+$. And it is preserved by Boolean combinations and quantifiers, since the domains of $M$ and $M^+$ are the same. What remains is the modal operator. If $M\satisfies\possible\varphi(a)$, then there is an extension $M\of N$ with $N\satisfies\varphi(a)$. We may find an expansion of $N$ to a model $N^+$ extending $M^+$, simply by interpreting the extra structure first on $M$ as in $M^+$, and then interpreting it on the rest of $N$. By the induction hypothesis, $N^+\satisfies\varphi(a)$, and so $M^+\satisfies\possible\varphi(a)$. Conversely, if $M^+\satisfies\possible\varphi(a)$, then there is some extension $N^+$ of $M^+$ satisfying $\varphi(a)$. The reduct $N$ of $N^+$ to the $\mathcal{L}$ language will therefore satisfy $\varphi(a)$ by the induction hypothesis, and this is an extension of $M$. So $M\satisfies\possible\varphi(a)$.
\end{proof}

In a countable language $\mathcal{L}$, there are at most continuum many theories $\overbar T$ completing $T$, and so theorem \ref{Theorem.Infinitary-disjunct-of-infinitary-conjuncts} provides an at-most-continuum size disjunction of countable conjunctions of $\mathcal{L}$ sentences. By considering the negation, we would also get a representation of $\varphi$ as a size-continuum conjunction of countable disjunctions of $\mathcal{L}$-sentences. Can these infinitary disjunctions and conjunctions be reduced to countable? We suspect not in general.

\begin{question}
In the class $\Mod(T)$ for a first-order theory $T$ in a countable language $\mathcal{L}$, is every $\possible\mathcal{L}$ assertion equivalent to an assertion of $\mathcal{L}_{\omega_1,\omega}$?
\end{question}

But there are some important cases where the answer is actually positive, in a very strong way. Recall that a theory $T$ admits \emph{quantifier elimination} with respect to a language if every assertion in that language is equivalent in $\Mod(T)$ to a quantifier-free formula. Similarly, a theory $T$ admits \emph{modality elimination} with respect to a language, if every assertion in that language is equivalent in $\Mod(T)$ to a modality-free assertion. A stronger notion would be the \emph{modality trivialization}, which holds over a language if $\possible\varphi(x)$ is equivalent specifically to $\varphi(x)$ in $\Mod(T)$ for every $\varphi$ in that language.

\begin{theorem}
If a theory $T$ admits quantifier elimination with respect to its language $\mathcal{L}$, then it admits simultaneous quantifier and modality elimination---every assertion in $\mathcal{L}^{\possible}$ is equivalent in $\Mod(T)$ to a Boolean combination of atomic formulas. Furthermore, the theory admits modality trivialization over $\mathcal{L}^{\possible}$, so that $\possible\varphi(x)$ is equivalent to $\varphi(x)$ for any assertion $\varphi\in\mathcal{L}^{\possible}$.
\end{theorem}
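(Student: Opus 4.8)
The plan is to prove both conclusions simultaneously by induction on the structure of a modal assertion $\varphi\in\mathcal{L}^{\possible}$, showing that $\varphi(x)$ is equivalent in $\Mod(T)$ to a quantifier-free formula $\theta_\varphi(x)$ — a Boolean combination of atomic formulas in the free variables of $\varphi$ — and moreover that $\possible\varphi(x)$ and $\necessary\varphi(x)$ are each also equivalent in $\Mod(T)$ to that same $\theta_\varphi(x)$. The engine of the whole argument is the elementary fact that quantifier-free truth is absolute between a structure and its substructures: if $M\sqof N$ in $\Mod(T)$ and $\bar a$ is a tuple from $M$, then $M\satisfies\theta[\bar a]$ if and only if $N\satisfies\theta[\bar a]$ for every quantifier-free $\theta$. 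Given quantifier elimination for $T$, this absoluteness is exactly what will force the modal operators to collapse.

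First I would dispatch the base and propositional cases. An atomic formula is already quantifier-free, so $\theta_\varphi=\varphi$; Boolean combinations are immediate, since a Boolean combination of quantifier-free formulas is quantifier-free and the relevant $\Mod(T)$-equivalences are preserved by Boolean connectives. For the quantifier case $\varphi=\exists y\,\psi$, the induction hypothesis yields a quantifier-free $\theta_\psi$ with $\psi\equiv\theta_\psi$ in $\Mod(T)$, hence $\exists y\,\psi\equiv\exists y\,\theta_\psi$ in $\Mod(T)$, and now quantifier elimination for $T$ provides a quantifier-free formula equivalent in $\Mod(T)$ to $\exists y\,\theta_\psi$; the universal quantifier is dual.

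The modal case is where the absoluteness fact does the work. Suppose $\varphi=\possible\psi$ and, by induction, $\psi\equiv\theta_\psi$ in $\Mod(T)$ with $\theta_\psi$ quantifier-free. Fix a model $M\satisfies T$ and a tuple $\bar a$ from $M$. If $M\satisfies\theta_\psi[\bar a]$, then since $M$ is an extension of itself we get $M\satisfies\possible\theta_\psi[\bar a]$, hence $M\satisfies\possible\psi[\bar a]$. Conversely, if $M\satisfies\possible\psi[\bar a]$, witnessed by an extension $M\sqof N$ in $\Mod(T)$ with $N\satisfies\psi[\bar a]$, then $N\satisfies\theta_\psi[\bar a]$ by the induction hypothesis applied in $N$, and then $M\satisfies\theta_\psi[\bar a]$ by absoluteness of quantifier-free truth down to the substructure $M$. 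So $\possible\psi\equiv\theta_\psi\equiv\psi$ in $\Mod(T)$, which is simultaneously the elimination of the modality and the trivialization $\possible\psi\iff\psi$. The case $\varphi=\necessary\psi$ is symmetric (or follows via $\necessary\psi\iff\neg\possible\neg\psi$), giving $\necessary\theta_\psi\equiv\theta_\psi$ as well. Assembling the cases, every $\varphi\in\mathcal{L}^{\possible}$ is equivalent in $\Mod(T)$ to the Boolean combination of atomic formulas $\theta_\varphi$, and for an arbitrary $\varphi\in\mathcal{L}^{\possible}$ we obtain $\possible\varphi\equiv\theta_\varphi\equiv\varphi$ in $\Mod(T)$, which is the asserted modality trivialization.

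I do not expect a genuine obstacle; the only point requiring care is bookkeeping — tracking that the equivalences are "in $\Mod(T)$" rather than logical validities and checking that they survive each inductive step (quantifiers relying on quantifier elimination, modalities relying on the quantifier-free absoluteness). If desired, one can remark that the same induction shows $\possible\mathcal{L}$ and $\mathcal{L}^{\possible}$ coincide over such a $T$ modulo $\Mod(T)$-equivalence.
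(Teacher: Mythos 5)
Your proof is correct and follows essentially the same route as the paper: an induction on formulas using quantifier elimination for the quantifier step and the absoluteness of quantifier-free truth between a model and its extensions to collapse the modal operators, with the trivialization $\possible\varphi\iff\varphi$ falling out of the same computation. The only difference is presentational—you carry the necessity case and the reflexivity of the extension relation explicitly, where the paper leaves them implicit.
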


\begin{proof}
Let us prove by induction on formulas that every formula $\varphi(x)$ in the full modal language $\mathcal{L}^{\possible}$ is equivalent in $\Mod(T)$ to a quantifier/modality-free assertion. Since $T$ admits quantifier elimination, we know this is true for the $\mathcal{L}$ formulas. And this property is preserved by Boolean connectives, as well as by quantifiers, which can subsequently be eliminated. So suppose that $\varphi(x)$ is quantifier/modality-free and consider $\possible\varphi(x)$. This holds in a model $M$ if and only if there is an extension $M\of N\satisfies T$ with $N\satisfies\varphi[a]$. But since $\varphi$ is quantifier/modality-free, this will be absolute back to the original model, so $M\satisfies\varphi[a]$, as desired.

It now follows generally that $\possible\varphi(x)$ is equivalent to $\varphi(x)$, since if $\psi(x)$ is the quantifier/modality-free formula equivalent to $\varphi(x)$, then $\possible\varphi(x)$ is equivalent to $\possible\psi(x)$, which is equivalent to $\psi(x)$ since $\psi$ is quantifier/modality-free, and this is equivalent to $\varphi(x)$. So $\possible\varphi(x)$ is equivalent to $\varphi(x)$ for any $\varphi(x)$ in $\mathcal{L}^{\possible}$.
\end{proof}

A theory $T$ is \emph{model complete} if every instance of the submodel relation $M\of N$ for models of $T$ is an elementary submodel $M\elesub N$. This is a weakening of quantifier eliminability, for it is equivalent to the property that every formula in $\mathcal{L}$ is equivalent in models of $T$ to a universal assertion \cite[theorem~8.3.1]{Hodges1993:ModelTheory}.

\begin{theorem}\label{Theorem.Modality-trivialization-equivalents}
For any first-order theory $T$, the following are equivalent:
 \begin{enumerate}
   \item $T$ admits modality trivialization over all assertions in $\mathcal{L}^{\possible}$.
   \item $T$ admits modality trivialization over all assertions in $\possible\mathcal{L}$.
   \item $T$ admits modality trivialization over all assertions in $\mathcal{L}$.
   \item $T$ is model complete.
 \end{enumerate}
\end{theorem}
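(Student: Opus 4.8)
The plan is to establish the cycle $(1)\Rightarrow(2)\Rightarrow(3)\Rightarrow(4)\Rightarrow(1)$. The implications $(1)\Rightarrow(2)$ and $(2)\Rightarrow(3)$ are immediate: since $\mathcal{L}\subseteq\possible\mathcal{L}\subseteq\mathcal{L}^{\possible}$, the property of admitting modality trivialization over a language is inherited downward to any fragment of it, because the defining equivalences $\possible\varphi(x)\iff\varphi(x)$ for the smaller class of formulas $\varphi$ are just a subfamily of those for the larger class.

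For $(3)\Rightarrow(4)$, assume $\possible\varphi(\bar x)\iff\varphi(\bar x)$ in $\Mod(T)$ for every $\mathcal{L}$-formula $\varphi$, and let $M\of N$ be models of $T$ with a tuple $\bar a\in M$ and an $\mathcal{L}$-formula $\varphi(\bar x)$. If $N\satisfies\varphi[\bar a]$, then since $N$ is an extension of $M$ witnessing the possibility, $M\satisfies\possible\varphi[\bar a]$, and hence $M\satisfies\varphi[\bar a]$ by hypothesis. Applying the same argument to $\neg\varphi$, if $N\satisfies\neg\varphi[\bar a]$ then $M\satisfies\neg\varphi[\bar a]$; contrapositively, $M\satisfies\varphi[\bar a]$ implies $N\satisfies\varphi[\bar a]$. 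Thus every submodel extension of models of $T$ is elementary, i.e.\ $T$ is model complete.

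For $(4)\Rightarrow(1)$, assume $T$ is model complete. First I would record the base observation that modality already trivializes over $\mathcal{L}$: for an $\mathcal{L}$-formula $\psi(\bar x)$, if $M\satisfies\possible\psi[\bar a]$ there is $M\of N\satisfies T$ with $N\satisfies\psi[\bar a]$, and model completeness gives $M\elesub N$, so $M\satisfies\psi[\bar a]$; the converse holds because $M$ is an extension of itself, and the dual statement $\necessary\psi\iff\psi$ follows symmetrically. Then I would prove by induction on $\varphi\in\mathcal{L}^{\possible}$ that $\varphi$ is equivalent in $\Mod(T)$ to some $\mathcal{L}$-formula $\varphi^{*}$ with the same free variables. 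Atomic formulas are already in $\mathcal{L}$; the Boolean and quantifier cases are routine, as $\Mod(T)$-equivalence of formulas is preserved under $\neg$, $\wedge$, and $\exists$. For the modal step, given $\possible\varphi$ with $\varphi\iff\varphi^{*}$ in $\Mod(T)$, one uses that this equivalence holds in \emph{every} model of $T$, in particular in every extension, to conclude $\possible\varphi\iff\possible\varphi^{*}$ in $\Mod(T)$; then $\possible\varphi^{*}\iff\varphi^{*}$ by the base observation, and likewise for $\necessary\varphi$. This completes the induction. Finally, modality trivialization over all of $\mathcal{L}^{\possible}$ follows: for any $\varphi\in\mathcal{L}^{\possible}$ with $\mathcal{L}$-equivalent $\varphi^{*}$, we get $\possible\varphi\iff\possible\varphi^{*}\iff\varphi^{*}\iff\varphi$.

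I expect the main obstacle to be the bookkeeping in the $(4)\Rightarrow(1)$ induction, specifically the step that pulls the $\mathcal{L}$-equivalent formula $\varphi^{*}$ through the $\possible$ operator: one must be careful that the equivalence $\varphi\iff\varphi^{*}$ is being invoked in the extension models $N$, not merely in $M$. Once that is handled, the whole argument rests on the single fact that model completeness renders every submodel extension elementary, which is exactly what forces the modal operators to collapse.
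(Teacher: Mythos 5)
Your proposal is correct and takes essentially the same approach as the paper: the downward implications are immediate, $(3)\Rightarrow(4)$ uses that an extension witnesses possibility, and $(4)\Rightarrow(1)$ proceeds by induction reducing every $\mathcal{L}^{\possible}$-formula to an $\mathcal{L}$-equivalent and then invoking model completeness to collapse $\possible$ on $\mathcal{L}$-formulas. The only (cosmetic) difference is that the paper phrases the induction as directly establishing $\possible\varphi\iff\varphi$ and extracts the modality-free $\varphi^{*}$ as a byproduct, whereas you induct on the existence of $\varphi^{*}$ first; the underlying argument is identical.
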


\begin{proof}
$(1\to 2\to 3)$ Immediate.

$(3\to 4)$ Assume that modalities trivialize in $\Mod(T)$ over assertions in $\mathcal{L}$. Suppose that $M\of N$ are models of $T$. If $N\satisfies\varphi(a)$ for some $a\in M$, then $M\satisfies\possible\varphi(a)$, which by modality trivivialization means that $M\satisfies\varphi(a)$, and so $M\elesub N$. So the theory is model complete.

$(4\to 1)$. Assume that the theory $T$ is model complete. We shall show by induction on formulas that $\possible\varphi(x)$ is equivalent in $\Mod(T)$ to $\varphi(x)$ for every assertion $\varphi\in\mathcal{L}^{\possible}$. Suppose that the claim is true for $\varphi$ and all subformulas of $\varphi$. It follows that $\varphi(x)$ is equivalent to the modality-free assertion $\varphi^*(x)$, obtained from $\varphi$ by deleting all its modal operators. Thus, $\varphi^*$ is an assertion of $\mathcal{L}$. If $M\satisfies\possible\varphi(a)$, then there is some extension $M\of N$ with $N\satisfies\varphi(a)$, which by our assumption means that $N\satisfies\varphi^*(a)$. By the model completeness of $T$, it follows that $M\satisfies\varphi^*(a)$, and consequently also $M\satisfies\varphi(a)$. So the modality has trivialized, as desired.
\end{proof}

The following example shows that modality elimination is a strictly weaker property than modality trivialization.

\begin{observation}
 There is a theory $T$ that is not model complete and consequently does not admit modality trivialization, but it does admit modality elimination---every assertion in $\mathcal{L}^{\possible}$ is equivalent in $\Mod(T)$ to an assertion in $\mathcal{L}$.
\end{observation}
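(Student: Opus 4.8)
The plan is to exhibit a deliberately simple witness. Let $\mathcal{L}$ be the language of pure equality, and let $T$ be the theory axiomatized by $\exists x\,(x=x)$, so that $\Mod(T)$ is the potentialist system of all nonempty sets under inclusion. First I would note that $T$ is not model complete: taking a one-element model $M$ and a two-element model $N$ with $M\of N$, we have $M\satisfies\exists x\,\forall y\,(y=x)$ but $N$ does not, so $M\not\elesub N$. By Theorem~\ref{Theorem.Modality-trivialization-equivalents} this means $T$ fails modality trivialization in all three of the senses listed there; a concrete instance is that $\possible(\exists x\,\exists y\,x\neq y)$ holds in \emph{every} model of $T$, since one can always adjoin a fresh element, so this modal sentence is equivalent over $\Mod(T)$ to the $\mathcal{L}$-sentence $\top$---already illustrating modality elimination---whereas the unmodalized $\exists x\,\exists y\,x\neq y$ fails in the one-element model.

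The real content is that \emph{every} assertion of $\mathcal{L}^{\possible}$ is equivalent over $\Mod(T)$ to one of $\mathcal{L}$. I would prove this by induction on the $\mathcal{L}^{\possible}$-formula $\varphi(\bar x)$, carrying the invariant that $\varphi(\bar x)$ is equivalent over $\Mod(T)$ to a Boolean combination of formulas of the two kinds ``$x_i=x_j$'' and ``there are at least $k$ elements.'' The ordinary first-order formulas of $\mathcal{L}$ already have this form by the standard quantifier analysis of the pure theory of equality, and the invariant is plainly preserved by Boolean connectives and by quantifiers. The only step needing thought is the modal one: given $\varphi^*(\bar x)\in\mathcal{L}$ in the stated normal form, I must show that $\possible\varphi^*$ and $\necessary\varphi^*$ again reduce to it. The truth of $\varphi^*$ in a pointed model $(M,\bar e)$ depends only on the equality pattern of $\bar e$ together with $\min(|M|,r)$ for a number $r$ read off from $\varphi^*$; and the extensions available from $(M,\bar e)$ in $\Mod(T)$ are exactly those that keep $\bar e$ fixed and adjoin an arbitrary set of fresh elements, so they enlarge $|M|$ to any value $\geq|M|$ and alter nothing else. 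Hence whether $\possible\varphi^*$ holds at $(M,\bar e)$ depends only on the equality pattern of $\bar e$ and on $\min(|M|,r)$, and every such condition is a Boolean combination of the two basic kinds of $\mathcal{L}$-formula above. The case $\necessary\varphi^*$ follows by the De Morgan equivalence $\necessary\psi\iff\neg\possible\neg\psi$. This closes the induction, so $T$ admits modality elimination although, not being model complete, it does not admit modality trivialization.

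The one point to handle carefully is the claim that the extensions reachable from $(M,\bar e)$ are governed solely by the equality pattern of $\bar e$ and the size of $M$---equivalently, that two pointed models realizing the same $\mathcal{L}$-type of their tuples are modally indistinguishable in a \emph{uniform} way. The non-uniform version is already a consequence of theorem~\ref{Theorem.L-type-determines-possibleL-type}: pure sets are homogeneous, so $\Aut(M)$ acts transitively on tuples of a given equality pattern and the renaming lemma applies. What the present argument needs in addition is only that the invariant $\bigl(\text{pattern of }\bar e,\ \min(|M|,r)\bigr)$ takes finitely many values, so that this dependence is itself $\mathcal{L}$-definable. I expect this bit of uniformity bookkeeping, rather than any real obstacle, to be the crux; once it is in place the remaining verifications are mechanical.
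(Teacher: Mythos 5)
Your proposal is correct, and it proves the observation with a genuinely different witness theory than the paper uses, though the overall strategy is the same. The paper takes $T$ to be the theory of dense linear orders with endpoints; you take $T$ to be the theory of nonempty pure sets in the language of equality. Both proofs follow the same template: refute model completeness with a concrete non-elementary submodel inclusion, fix a normal form for $\mathcal{L}$-formulas that is closed under Boolean connectives and quantifiers, and then show that $\possible$ applied to a normal-form formula reduces again to normal form by analyzing what can change under extension. Where the paper's normal form is built from atomic formulas together with the predicates ``$x$ is least'' and ``$x$ is greatest'' (with the modal step resting on: atomic facts are necessary, possibly-least equals least, possibly-greatest equals greatest, and every element is possibly non-least and possibly non-greatest), your normal form is built from equations $x_i=x_j$ and the sentences ``there are at least $k$ elements,'' with the modal step resting on the observation that an extension fixes the equality pattern of the parameters and can only increase the truncated cardinality $\min(|M|,r)$ through the finitely many values up to $r$. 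Your witness is arguably more elementary, and the fact that the relevant semantic invariant takes only finitely many values makes the $\mathcal{L}$-definability of $\possible\varphi^*$ completely transparent; your closing worry about uniformity is indeed resolved exactly by that finiteness, and the appeal to theorem \ref{Theorem.L-type-determines-possibleL-type} is not even needed, since for pure sets an extension is literally just a superset. The paper's choice, for its part, exhibits the phenomenon in a theory where the persistent and non-persistent atomic data interact less trivially. Both arguments are complete and correct.
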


\begin{proof}
Let $T$ be the theory of a dense linear order having a least and greatest element. This is not model complete, because one can have a suborder $M\of N$ which changes the particular element that is least or greatest, and so this will not be an elementary submodel. So by theorem \ref{Theorem.Modality-trivialization-equivalents} the theory does not admit modality trivialization. But we claim that the theory does admit modality elimination. Indeed, we claim that every assertion in $\mathcal{L}^{\possible}$ is equivalent to a Boolean combination of atomic assertions and assertions that particular variables are least or greatest. These kinds of assertions are closed under Boolean combinations, and one can handle the quantifier case just as in the elimination of quantifiers argument for endless dense linear orders. Finally, consider $\possible\varphi(x_0,\ldots,x_n)$, where $\varphi$ is such a Boolean combination. By putting $\varphi$ into disjunctive normal form and then distributing the $\possible$ over the disjunction, we may assume that $\varphi$ is a conjunction of atomic or negated atomic assertions and assertions that particular variables $x_i$ are or are not least or greatest. But true atomic facts are necessarily true; an element $x$ is possibly least if and only if it is actually least; it is possibly greatest if and only if it is actually greatest; and every $x$ is possibly not least and possibly not greatest. These observations allow us to express $\possible\varphi$ without the modal operator.
\end{proof}

Theorem \ref{Theorem.Modality-trivialization-equivalents} shows that modality trivialization over $\mathcal{L}$ is equivalent to trivialization over $\possible\mathcal{L}$ and $\mathcal{L}^{\possible}$. The corresponding fact for modality elimination is the following:

\begin{theorem}
Every first-order theory $T$ admits modality elimination for assertions in $\possible\mathcal{L}$ if and only if it does so for $\mathcal{L}^{\possible}$.
\end{theorem}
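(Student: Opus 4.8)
The reverse implication is immediate: $\possible\mathcal{L}$ is a sublanguage of $\mathcal{L}^{\possible}$, so a theory that eliminates modalities over the larger language certainly does so over the smaller one. The content is therefore the forward direction, and the plan is to prove it by a routine induction on the construction of $\mathcal{L}^{\possible}$ formulas: assuming $T$ admits modality elimination over $\possible\mathcal{L}$, I would show that every $\varphi(\bar x)\in\mathcal{L}^{\possible}$ is equivalent in $\Mod(T)$ to a modality-free assertion, which in $\mathcal{L}^{\possible}$ means an ordinary first-order $\mathcal{L}$ formula.

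The base case of atomic formulas is trivial, and the Boolean and quantifier cases go exactly as in a quantifier-elimination argument: a Boolean combination of modality-free assertions is modality-free, and if $\varphi(\bar x,y)$ is equivalent in $\Mod(T)$ to an $\mathcal{L}$ formula $\psi(\bar x,y)$, then $\exists y\,\varphi(\bar x,y)$ is equivalent in $\Mod(T)$ to $\exists y\,\psi(\bar x,y)$, still an $\mathcal{L}$ formula. Since $\necessary$ is interdefinable with $\possible$ via negation, the only remaining case is $\possible\varphi(\bar x)$. Here the induction hypothesis supplies a modality-free $\mathcal{L}$ formula $\psi(\bar x)$ with $\Mod(T)\satisfies\varphi(\bar x)\iff\psi(\bar x)$. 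Because $\Mod(T)$ is closed under the extension relation---every extension of a model of $T$ inside the system is again a model of $T$---this equivalence lifts under the diamond: $\possible\varphi(\bar x)$ is equivalent in $\Mod(T)$ to $\possible\psi(\bar x)$, as the witnessing extensions range over the very same models and those models agree on $\varphi$ and $\psi$.

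Now $\possible\psi(\bar x)$ is an assertion of $\possible\mathcal{L}$, since $\psi$ lies in the base language $\mathcal{L}$ and only a single modal operator has been applied on the outside, no modal operator falling under a quantifier. By the hypothesis that $T$ admits modality elimination over $\possible\mathcal{L}$, the assertion $\possible\psi(\bar x)$ is equivalent in $\Mod(T)$ to a modality-free $\mathcal{L}$ assertion. Chaining the three equivalences, $\possible\varphi(\bar x)$ is equivalent in $\Mod(T)$ to a modality-free $\mathcal{L}$ assertion, which completes the induction and hence the theorem.

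I do not expect a serious obstacle. The one point deserving explicit mention is that $\Mod(T)$-equivalence of $\varphi$ and $\psi$ lifts to $\Mod(T)$-equivalence of $\possible\varphi$ and $\possible\psi$; this rests only on the automatic fact that the submodel extension relation on $\Mod(T)$ keeps one inside $\Mod(T)$, and once it is noted, the hypothesis is invoked at exactly one place---on the first-order assertion $\psi$---to discharge the outermost modal operator, with everything else being the standard bookkeeping of an elimination induction.
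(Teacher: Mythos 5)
Your proof is correct and follows essentially the same route as the paper's: an induction on $\mathcal{L}^{\possible}$ formulas in which the Boolean and quantifier steps are routine and the modal step is discharged by noting that $\possible\psi(\bar x)$ lies in $\possible\mathcal{L}$ once the induction hypothesis has replaced the formula under the diamond by an $\mathcal{L}$-formula $\psi$. Your explicit remark that $\Mod(T)$-equivalence lifts under $\possible$ because extensions in the potentialist system remain models of $T$ is a point the paper leaves implicit, but it is the same argument.
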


\begin{proof}
The converse implication is immediate. For the forward implication, suppose that $T$ admits modality elimination for $\possible\mathcal{L}$-assertions. We now prove inductively that every assertion $\varphi(x)$ of $\mathcal{L}^{\possible}$ is equivalent in $\Mod(T)$ to an $\mathcal{L}$-assertion $\psi(x)$. This is true for the $\mathcal{L}$-assertions themselves, and it is preserved by Boolean combinations and quantifiers. Also, it is preserved by the modal operator $\possible$, precisely because $\possible\psi(x)$ is an assertion of $\possible\mathcal{L}$, if $\psi\in\mathcal{L}$, and we have assumed that $T$ eliminates modalities for $\possible\mathcal{L}$ assertions.
\end{proof}

The \emph{\Lowenheim\ number} of a language with respect to a class of models is the smallest cardinal $\kappa$ such that any assertion $\varphi$ of that language that is satisfiable in a model of the class is satisfiable in such a model of size at most $\kappa$.

\begin{theorem}\label{Theorem.Lowenheim-Skolem}
For any first-order theory $T$, the upward and downward \Lowenheim-Skolem theorems hold in $\Mod(T)$ with respect to $\possible\mathcal{L}$, using the same cardinals as for $\mathcal{L}$. That is, every model $M\satisfies T$ has $\possible\mathcal{L}$-elementarity extensions and submodels of all the same cardinalities as it does for $\mathcal{L}$-elementarity. Consequently the \Lowenheim\ number of $\possible\mathcal{L}$ with respect to $\Mod(T)$ is the same as for $\mathcal{L}$ with respect to $\Mod(T)$.
\end{theorem}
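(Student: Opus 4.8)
The plan is to reduce everything to the classical \Lowenheim-Skolem theorems for the base language $\mathcal{L}$ by invoking the key lemma, which identifies $\mathcal{L}$-elementarity with $\possible\mathcal{L}$-elementarity in $\Mod(T)$. The whole content of the theorem is carried by that identification; the rest is bookkeeping.

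For the downward direction, I would start with a model $M\satisfies T$ and a subset $A\of M$, and apply the classical downward \Lowenheim-Skolem theorem for $\mathcal{L}$ to obtain, for each admissible cardinal $\kappa$ with $|A|+|\mathcal{L}|+\aleph_0\leq\kappa\leq|M|$, an $\mathcal{L}$-elementary submodel $M_0\elesub_{\mathcal{L}}M$ with $A\of M_0$ and $|M_0|=\kappa$. By the key lemma, $M_0\elesub_{\possible\mathcal{L}}M$ as well, which is exactly the conclusion. For the upward direction, given $M\satisfies T$ and any cardinal $\lambda\geq|M|+|\mathcal{L}|+\aleph_0$, the classical upward \Lowenheim-Skolem theorem yields an $\mathcal{L}$-elementary extension $M\elesub_{\mathcal{L}}N$ with $|N|=\lambda$, and again the key lemma upgrades this to $M\elesub_{\possible\mathcal{L}}N$. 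Since the $\possible\mathcal{L}$-elementary submodels/extensions we produce are literally the $\mathcal{L}$-elementary ones, they have all the same cardinalities.

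For the \Lowenheim\ number claim I would argue in both directions. Since $\mathcal{L}\of\possible\mathcal{L}$, every $\mathcal{L}$-assertion is a $\possible\mathcal{L}$-assertion, so the \Lowenheim\ number of $\possible\mathcal{L}$ with respect to $\Mod(T)$ is at least that of $\mathcal{L}$. Conversely, if $\varphi\in\possible\mathcal{L}$ is satisfied by a tuple $\bar a$ in some $M\satisfies T$, then applying the downward result to a set containing $\bar a$ produces a $\possible\mathcal{L}$-elementary submodel $M_0\elesub_{\possible\mathcal{L}}M$ of the relevant small size still containing $\bar a$, so $M_0\satisfies\varphi[\bar a]$; hence the \Lowenheim\ number of $\possible\mathcal{L}$ is at most that of $\mathcal{L}$, and the two coincide.

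There is essentially no serious obstacle here. The only points requiring a little care are the cardinal-arithmetic side conditions, which simply get inherited from the classical statements, and the treatment of assertions with free variables in the \Lowenheim-number argument, which is handled by the usual device of regarding free variables as constant symbols, exactly as in the proof of theorem \ref{Theorem.Infinitary-disjunct-of-infinitary-conjuncts}.
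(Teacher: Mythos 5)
Your proposal is correct and follows exactly the paper's route: the paper's proof is a two-sentence reduction to the classical L\"owenheim--Skolem theorem via the key lemma identifying $\mathcal{L}$-elementarity with $\possible\mathcal{L}$-elementarity. Your write-up just spells out the cardinal bookkeeping and the two inequalities for the L\"owenheim number, which the paper leaves implicit.
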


\begin{proof}
This is immediate from the key lemma and the classical \Lowenheim-Skolem theorem, since the key lemma shows that $\mathcal{L}$-elementarity is the same as $\possible\mathcal{L}$-elementarity. So any $\mathcal{L}$-elementary substructure or extension is also $\possible\mathcal{L}$-elementary.
\end{proof}

%
%
%
%
%

Let us now begin to observe that many of the properties we have proved for the intermediate modal language $\possible\mathcal{L}$ do not extend to the full modal language $\mathcal{L}^{\possible}$. The main lesson seems to be that $\possible\mathcal{L}$ is in many respects closer to $\mathcal{L}$ than it is to $\mathcal{L}^{\possible}$. Nevertheless, some classical model theoretic principles fail even for $\possible\mathcal{L}$.

\begin{observation}\label{Observation.Key-lemma-fails-for-Lpossible}
The equivalence stated in the key lemma does not hold generally for elementarity in the full modal language $\mathcal{L}^{\possible}$. In modal graph theory, there can be elementary substructures that are not $\mathcal{L}^{\possible}$-elementary and not even $\mathcal{L}^{\possible}$-elementarily equivalent.
\end{observation}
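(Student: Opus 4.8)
The plan is to exploit the fact, already established in this section, that the full modal language $\mathcal{L}_\sim^{\possible}$ of graph theory can express countability: by Theorem~\ref{Theorem.Countability-is-expressible} there is a sentence $\sigma\in\mathcal{L}_\sim^{\possible}$ true in exactly the countable graphs. Crucially, $\sigma$ is not an assertion of $\possible\mathcal{L}_\sim$, since it has modal operators under the scope of quantifiers, which is precisely why it does not contradict the key lemma. Since $\mathcal{L}_\sim$-elementary equivalence manifestly cannot detect cardinality, it suffices to produce two graphs $M\elesub_{\mathcal{L}_\sim}N$ of different infinite cardinalities, one countable and one not; such $M$ and $N$ will then disagree on $\sigma$ and hence fail to be $\mathcal{L}_\sim^{\possible}$-elementarily equivalent.

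Concretely I would take the edgeless graphs: let $M$ be the countably infinite graph with no edges and $N$ the edgeless graph on $\omega_1$ many vertices, with $M$ sitting inside $N$ as the induced subgraph on $\omega\subseteq\omega_1$. Then $M\of N$ as $\mathcal{L}_\sim$-structures, and the one thing to check is that this inclusion is $\mathcal{L}_\sim$-elementary. This follows because both graphs are models of the complete, $\aleph_0$-categorical theory of an infinite pure set (the edge relation is empty), a theory which admits quantifier elimination---the only atomic formulas are equations---and is therefore model complete, so every submodel relation among its models is elementary. Hence $M\elesub_{\mathcal{L}_\sim}N$.

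Now, by Theorem~\ref{Theorem.Countability-is-expressible}, $M\satisfies\sigma$ because $M$ is countable, while $N\satisfies\neg\sigma$ because $N$ is uncountable. So $M$ and $N$ disagree on the $\mathcal{L}_\sim^{\possible}$-sentence $\sigma$; they are not $\mathcal{L}_\sim^{\possible}$-elementarily equivalent, and a fortiori the inclusion $M\of N$ is not $\mathcal{L}_\sim^{\possible}$-elementary. This yields exactly the claimed counterexample to the key lemma for the full modal language.

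There is no real obstacle here once the expressibility of countability is in hand; the only point needing a moment's care is the $\mathcal{L}_\sim$-elementarity of $M\of N$, which as noted reduces to the model completeness of the theory of infinite edgeless graphs (one could instead simply invoke Tarski--Vaught directly). The same argument runs with any first-order-complete family of graphs containing models of two distinct infinite cardinalities, so the phenomenon is robust: cardinality-based distinctions visible in $\mathcal{L}_\sim^{\possible}$ are invisible to $\mathcal{L}_\sim$, and indeed invisible to $\possible\mathcal{L}_\sim$ by the key lemma and Theorem~\ref{Theorem.L-theory-determines-possibleL-theory}.
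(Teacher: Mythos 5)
Your proof is correct and follows essentially the same route as the paper: both arguments hinge on the expressibility of countability in $\mathcal{L}_\sim^{\possible}$ (theorem \ref{Theorem.Countability-is-expressible}) applied to a countable elementary substructure of an uncountable graph. The only difference is that the paper obtains such a pair abstractly via the L\"owenheim--Skolem theorem, whereas you exhibit a concrete pair (edgeless graphs on $\omega$ and $\omega_1$, elementary by quantifier elimination); this is a routine instantiation of the same idea.
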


\begin{proof}
Consider the class of all graphs. By the \Lowenheim-Skolem theorem, we can have $M\elesub_{\mathcal{L}}N$, where $M$ is a countable graph and $N$ is uncountable. But by theorem \ref{Theorem.Countability-is-expressible}, countability is $\mathcal{L}_\sim^{\possible}$ expressible for graphs,
showing the failure of $M\elesub_{\mathcal{L}_\sim^{\possible}}N$ and indeed of $M\equiv_{\mathcal{L}_\sim^{\possible}}N$.
\end{proof}

The same example establishes the following:

\begin{observation}
The equivalence stated in theorem \ref{Theorem.L-theory-determines-possibleL-theory} does not generally extend to equivalence in the full modal language $\mathcal{L}^{\possible}$. In modal graph theory, there can be elementarily equivalent models that are not $\mathcal{L}^{\possible}$-elementarily equivalent.
\end{observation}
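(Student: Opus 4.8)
The plan is to reuse verbatim the construction from the preceding observation, merely reading off a slightly different conclusion. First I would invoke the downward \Lowenheim-Skolem theorem for the base language $\mathcal{L}_\sim$: starting from any uncountable graph $N$, there is a countable subgraph $M$ with $M\elesub_{\mathcal{L}_\sim}N$. In particular $M$ and $N$ are elementarily equivalent, $M\eleequiv_{\mathcal{L}_\sim}N$, so the hypothesis of theorem \ref{Theorem.L-theory-determines-possibleL-theory} is met at the level of $\mathcal{L}_\sim$.

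Next I would appeal to theorem \ref{Theorem.Countability-is-expressible}, which provides a sentence $\sigma\in\mathcal{L}_\sim^{\possible}$ holding in a graph exactly when that graph is countable. Since $M$ is countable and $N$ is not, we get $M\satisfies\sigma$ while $N\not\satisfies\sigma$. Hence $M\not\eleequiv_{\mathcal{L}_\sim^{\possible}}N$, even though $M\eleequiv_{\mathcal{L}_\sim}N$. This exhibits a pair of elementarily equivalent models of graph theory that are not $\mathcal{L}^{\possible}$-elementarily equivalent, which is exactly the assertion.

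There is essentially no obstacle here: the statement is an immediate corollary of theorem \ref{Theorem.Countability-is-expressible} together with \Lowenheim-Skolem, and the only point requiring a word of care is noting that $M\elesub_{\mathcal{L}_\sim}N$ yields $\mathcal{L}_\sim$-elementary equivalence (so that theorem \ref{Theorem.L-theory-determines-possibleL-theory} would apply at the level of $\possible\mathcal{L}_\sim$, making the contrast with the full modal language the whole content). If one prefers an explicit witness rather than an arbitrary $N$, one can take $N$ to be an uncountable edgeless graph, or an uncountable random graph, and $M$ a countable elementary submodel; the argument is unchanged.
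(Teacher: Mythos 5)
Your proposal is correct and matches the paper exactly: the paper introduces this observation with the remark ``The same example establishes the following,'' referring to the countable elementary substructure of an uncountable graph distinguished by the countability sentence of theorem \ref{Theorem.Countability-is-expressible}. Nothing further is needed.
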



We are not sure whether corollary \ref{Corollary.Complete-theory} extends to the full modal language $\mathcal{L}^{\possible}$.

\begin{question}
Is there a complete first-order theory $T$ for which not all models in $\Mod(T)$ have the same $\mathcal{L}^{\possible}$ theory?
\end{question}

\noindent One candidate may be to take $T$ as the theory of true arithmetic, and then argue somehow that in a nonstandard model of $T$ one can define the standard cut in the modal language and be thereby enabled to make assertions about the standard system of the model in $\mathcal{L}^{\possible}$. But we have not yet been able to make this idea work.

\begin{observation}
Theorem \ref{Theorem.L-type-determines-possibleL-type} does not hold generally for types in the full modal language $\mathcal{L}^{\possible}$---in modal graph theory, the $\mathcal{L}$-type of an individual in a model of $T$ does not necessarily determine the $\mathcal{L}^{\possible}$ type of that individual in $\Mod(T)$.
\end{observation}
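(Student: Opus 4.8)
The plan is to reuse, now at the level of types, exactly the countable-versus-uncountable phenomenon already exploited in Observation~\ref{Observation.Key-lemma-fails-for-Lpossible}. By the upward \Lowenheim-Skolem theorem, fix a countable graph $M$ and an uncountable elementary extension $M\elesub_{\mathcal{L}_\sim}N$, and let $a$ be any vertex of $M$. Since $M\elesub_{\mathcal{L}_\sim}N$, the element $a$ realizes exactly the same $\mathcal{L}_\sim$-type whether this is computed in $M$ or in $N$. But Theorem~\ref{Theorem.Countability-is-expressible} provides a sentence $\sigma\in\mathcal{L}_\sim^{\possible}$ true in precisely the countable graphs, so $M\satisfies\sigma$ while $N\not\satisfies\sigma$; being a sentence, $\sigma$ lies in the $\mathcal{L}_\sim^{\possible}$-type of $a$ as computed in $M$ but not in the $\mathcal{L}_\sim^{\possible}$-type of $a$ as computed in $N$. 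Thus the common $\mathcal{L}_\sim$-type of $a$ fails to pin down its $\mathcal{L}_\sim^{\possible}$-type, which is the asserted failure.

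If one prefers a single graph exhibiting two individuals of the same $\mathcal{L}_\sim$-type but different $\mathcal{L}_\sim^{\possible}$-types, I would take $G$ to be the disjoint union of a countably infinite complete graph and an uncountable complete graph, with $a$ a vertex of the first component and $b$ a vertex of the second. The first step is to check that $a$ and $b$ have the same $\mathcal{L}_\sim$-type in $G$: any two disjoint unions of exactly two infinite complete graphs are elementarily equivalent (a routine Ehrenfeucht--\Fraisse argument, or: the theory is $\aleph_0$-categorical, its countable model $K_\omega\sqcup K_\omega$ having an oligomorphic automorphism group, with orbits on tuples classified by the equality pattern together with the ``same component'' partition), and $K_\omega\sqcup K_\omega$ is vertex-transitive, so $\Th(G)$ proves that all vertices have a common $1$-type. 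The second step is to observe that ``the connected component of $x$ is countable'' is itself expressible in $\mathcal{L}_\sim^{\possible}$: one relativizes the countability sentence $\sigma$ to the modally definable set $\set{y:\chi(x,y)}$ of vertices connected to $x$, using the connectivity formula $\chi$ from the earlier theorem. This relativized formula holds of $a$ (its component is countable) and fails of $b$, separating their $\mathcal{L}_\sim^{\possible}$-types.

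The main obstacle is essentially bookkeeping rather than mathematics. For the elementary-extension version there is no real obstacle beyond citing \Lowenheim-Skolem and Theorem~\ref{Theorem.Countability-is-expressible}. For the single-model version, the two points that need a little care are (i) the verification that the disjoint union of two infinite complete graphs has a complete (indeed $\aleph_0$-categorical) theory in which all vertices realize one type, and (ii) the verification that relativizing the countability sentence to the component of $x$ genuinely expresses ``the component of $x$ is countable.'' For (ii) one uses that connected components only grow under extension, so uncountability of the component is preserved and the relativized $\sigma$ correctly fails; while if the component of $x$ is countable, the witnessing extension (adjoin a new vertex $\omega$ together with an infinite chain of new neighbours and join each original vertex of the component to a distinct chain vertex) is built just as in the proof of Theorem~\ref{Theorem.Countability-is-expressible}, the relevant injection only concerning the vertices of the component other than $\omega$ and its neighbours.
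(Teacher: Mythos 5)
Your proposal is correct, and your second construction is essentially the paper's own argument: the paper likewise exhibits a single graph with two vertices of the same first-order $1$-type separated by a modally expressible cardinality property, but it uses two stars---a center $a$ with countably many leaves and a center $b$ with uncountably many---and distinguishes them by the countability of the \emph{neighbor} set rather than of the connected component. That choice buys a small simplification: the neighbor set $\set{y : y\sim x}$ is first-order definable, so one relativizes the countability sentence of theorem \ref{Theorem.Countability-is-expressible} to it directly, without routing through the modal connectivity formula $\chi$ and without the extra care you rightly take about components absorbing the new vertex $\omega$ and its chain. Your first argument (a countable graph and an uncountable elementary extension, with the same vertex $a$ viewed in both) is simpler still, but be aware that it only refutes the cross-model reading of ``determines'': it recycles observation \ref{Observation.Key-lemma-fails-for-Lpossible}, and the distinguishing formula is a sentence that does not mention $a$ at all, so it does not touch the form of theorem \ref{Theorem.L-type-determines-possibleL-type} actually proved in the paper, namely that two tuples of the same $\mathcal{L}$-type \emph{within a single model} have the same $\possible\mathcal{L}$-type. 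Your single-model example is therefore the one that carries the weight, and it is sound: completeness and $\aleph_0$-categoricity of the theory of two infinite complete graphs give the common $1$-type of $a$ and $b$, and your handling of the relativized countability sentence---excluding $\omega$ and its neighbors from the injection, and using that components and hence their uncountability persist upward under extension---is exactly what is needed.
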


\begin{proof}
Consider the class of graphs. Let $G$ be a graph consisting of two stars, one with center vertex $a$ having countably infinitely many neighbors and one with center vertex $b$ having uncountably many neighbors, and no other vertices or edges.
$$\begin{tikzpicture}
\draw[blue] (0,0) node[circle,thick,draw,inner sep=1pt] (a) {$a$};
\draw[blue,very thin] \foreach \v in {0,5,...,355} {(a) -- +(\v:1) node[dot=.2] {} };
\draw[red] (3,0) node[circle,thick,draw,inner sep=1pt] (b) {$b$};
\draw[red,very thin] \foreach \v in {0,3,...,359} {(b) -- +(\v:1) node[dot=.2] {} };
\end{tikzpicture}$$
The types of $a$ and $b$ are the same in the language of graph theory, but they are not the same in the language of modal graph theory, which can express the fact that $a$ has only countably many neighbors, while $b$ has uncountably many.
\end{proof}

\begin{observation}
The statement of theorem \ref{Theorem.Infinitary-disjunct-of-infinitary-conjuncts} does not generally hold for assertions in the full modal language $\mathcal{L}^{\possible}$. In modal graph theory there are $\mathcal{L}^{\possible}$-expressible assertions that are not equivalent to any infinitary Boolean combination of $\mathcal{L}$ assertions.
\end{observation}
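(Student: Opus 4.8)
The plan is to exploit a basic rigidity property of infinitary Boolean combinations of first-order sentences: such a combination cannot distinguish elementarily equivalent structures. So it will suffice to exhibit a single $\mathcal{L}_\sim^{\possible}$-assertion together with two elementarily equivalent graphs on which it disagrees. The natural candidate is the countability sentence $\sigma\in\mathcal{L}_\sim^{\possible}$ provided by theorem~\ref{Theorem.Countability-is-expressible}.

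First I would record the rigidity fact. Suppose $G\eleequiv_{\mathcal{L}_\sim} H$ and $\Phi$ is built from $\mathcal{L}_\sim$-sentences using (possibly infinitary) conjunctions, disjunctions, and negations. A straightforward induction on the construction of $\Phi$ shows $G\satisfies\Phi\iff H\satisfies\Phi$: the base case is exactly the hypothesis $G\eleequiv_{\mathcal{L}_\sim}H$, and each Boolean step preserves the equivalence, since the truth value at that stage is computed uniformly from the truth values of the constituents, which $G$ and $H$ share. For formulas with free variables one passes to the language $\mathcal{L}_\sim^+$ with a constant for the displayed individual, exactly as in the proof of theorem~\ref{Theorem.Infinitary-disjunct-of-infinitary-conjuncts}, so the same conclusion holds there. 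In short, any assertion equivalent in $\Mod(T)$ to an infinitary Boolean combination of $\mathcal{L}_\sim$-assertions is invariant under $\mathcal{L}_\sim$-elementary equivalence of the ambient graph.

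Next I would produce two elementarily equivalent graphs of different cardinalities. The cleanest choice is an edgeless graph: let $G$ be the countably infinite graph with no edges and $H$ an uncountable graph with no edges. Their common $\mathcal{L}_\sim$-theory is just the theory of an infinite set (with $\sim$ interpreted empty), which is complete, so $G\eleequiv_{\mathcal{L}_\sim}H$. Alternatively one may take $G$ to be the countable random graph and $H$ any uncountable model of its theory, which is likewise complete. In either case $G\satisfies\sigma$ while $H\not\satisfies\sigma$, since $G$ is countable and $H$ is not. Hence $\sigma$ is not invariant under $\mathcal{L}_\sim$-elementary equivalence, and therefore, by the rigidity fact, $\sigma$ is not equivalent in the class of all graphs to any infinitary Boolean combination of $\mathcal{L}_\sim$-assertions.

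There is essentially no hard step here; the only point requiring a little care is to state the rigidity fact at the right level of generality — arbitrary infinitary Boolean combinations, and the reduction to the free-variable case via added constants — so that it covers precisely the class of assertions appearing in theorem~\ref{Theorem.Infinitary-disjunct-of-infinitary-conjuncts}. One should also note in passing that both graphs genuinely belong to $\Mod(T)$ for the relevant $T$, but since we are working in the class of all graphs this is automatic.
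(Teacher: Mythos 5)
Your proposal is correct and follows essentially the same route as the paper: both arguments rest on the countability sentence $\sigma$ from theorem \ref{Theorem.Countability-is-expressible} together with the invariance of infinitary Boolean combinations of $\mathcal{L}$-sentences under $\mathcal{L}$-elementary equivalence. The only cosmetic difference is that the paper invokes a countable elementary substructure of an uncountable graph via \Lowenheim-Skolem, whereas you exhibit two concrete elementarily equivalent edgeless graphs of different cardinalities; the underlying obstruction is identical.
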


\begin{proof}
Theorem \ref{Theorem.Countability-is-expressible} shows that countability is expressible in modal graph theory, but the assertion that the graph is uncountable cannot be equivalent to any infinitary Boolean combination of sentences in the language of graph theory, since the truth of such assertsion is preserved from an uncountable model to a countable elementary substructure.
\end{proof}

\begin{observation}
The conclusion of theorem \ref{Theorem.Lowenheim-Skolem} does not hold generally for the full modal language $\mathcal{L}^{\possible}$. In modal graph theory, no uncountable graph has a countable $\mathcal{L}^{\possible}$-elementary substructure, and the \Lowenheim\ number for modal graph theory in the language $\mathcal{L}_\sim^{\possible}$, if it exists, is enormous.
\end{observation}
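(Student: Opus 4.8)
The plan is to read off both halves of the observation from the expressive power of modal graph theory. For the first half, I would argue directly from theorem~\ref{Theorem.Countability-is-expressible}. Let $\sigma\in\mathcal{L}_\sim^{\possible}$ be the sentence with $G\satisfies\sigma$ if and only if $G$ is countable. Suppose, toward a contradiction, that some uncountable graph $G$ had a countable $\mathcal{L}_\sim^{\possible}$-elementary substructure $H\elesub_{\mathcal{L}_\sim^{\possible}}G$. Since $G$ is uncountable we have $G\satisfies\neg\sigma$, and hence $H\satisfies\neg\sigma$ by $\mathcal{L}_\sim^{\possible}$-elementarity, which makes $H$ uncountable, contrary to hypothesis. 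So no uncountable graph admits a countable $\mathcal{L}_\sim^{\possible}$-elementary substructure; in fact $\mathcal{L}_\sim^{\possible}$-elementary substructures must share the same ``countable versus uncountable'' status, so the downward \Lowenheim-Skolem theorem fails dramatically for $\mathcal{L}_\sim^{\possible}$. This strengthens observation~\ref{Observation.Key-lemma-fails-for-Lpossible}, which only exhibited \emph{some} $M\elesub_{\mathcal{L}_\sim}N$ failing to be $\mathcal{L}_\sim^{\possible}$-elementary; here the obstruction is complete.

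For the second half, I would first note that $\mathcal{L}_\sim^{\possible}$, being the closure of a finite language under the two modal operators, the Boolean connectives, and quantifiers using finite formulas, has only countably many sentences; so the \Lowenheim\ number for modal graph theory, construed as the supremum over the countably many satisfiable $\mathcal{L}_\sim^{\possible}$-sentences of the least size of a graph witnessing each, is in fact a well-defined cardinal, though an enormous one. To see it is enormous, I would invoke the expressibility results promised in section~\ref{Section.Interpretative-power-of-modal-graph-theory}: for each $\lambda$ in the hierarchy comprising $\aleph_\omega$, $\beth_\omega$, the first $\beth$-fixed point, its successor $\beth$-fixed point, the first $\beth$-hyperfixed-point, and so on, there is a sentence $\psi_\lambda\in\mathcal{L}_\sim^{\possible}$ holding in a graph exactly when that graph has size $\lambda$. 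Each such $\lambda$ provably exists in $\ZFC$ (the $\beth$-fixed points and hyperfixed-points form club classes of cardinals), so $\psi_\lambda$ is satisfiable, while every model of it has cardinality precisely $\lambda$; hence the \Lowenheim\ number is at least $\lambda$. As $\lambda$ ranges over this unboundedly tall hierarchy, the \Lowenheim\ number is forced above every one of those cardinals.

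The genuine content of the observation lies outside the deduction above: it is the exact-cardinality sentences $\psi_\lambda$ of section~\ref{Section.Interpretative-power-of-modal-graph-theory} that do the work, and the present argument from them is routine. The one small point to verify is that each $\psi_\lambda$ is genuinely satisfiable rather than vacuously false, which is exactly why one restricts to cardinals whose existence is a theorem of $\ZFC$. I would also record that the results already in hand do not suffice for the ``enormous'' conclusion: expressibility of having size exactly continuum only bounds the \Lowenheim\ number below by $\continuum$, which need not be large, so the stronger interpretative machinery of section~\ref{Section.Interpretative-power-of-modal-graph-theory} is what is essential here.
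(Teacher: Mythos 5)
Your proposal is correct and follows essentially the same route as the paper: the first half is the paper's argument (a countable $\mathcal{L}_\sim^{\possible}$-elementary substructure would have to agree with the uncountable graph on the countability sentence of theorem~\ref{Theorem.Countability-is-expressible}), and the second half likewise reads the lower bound on the \Lowenheim\ number off the exact-cardinality sentences from section~\ref{Section.Interpretative-power-of-modal-graph-theory}. One caveat: your side claim that the \Lowenheim\ number is ``in fact a well-defined cardinal'' because the language is countable is not warranted in \ZFC---taking the supremum of the least witnessing cardinalities requires a definable satisfaction relation for $\mathcal{L}_\sim^{\possible}$, whose existence is precisely the metamathematical issue raised in section~\ref{Section.Metamathematical-issues}, which is why the statement (and the paper) hedges with ``if it exists''; this does not affect the validity of your proof of the observation as stated.
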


\begin{proof}
The first part of this is an immediate consequence of the fact that countability is expressible in $\mathcal{L}^{\possible}$, as shown in theorem \ref{Theorem.Countability-is-expressible}. Every countable submodel will satisfy the countability assertion, but the whole uncountable graph will not.

The \Lowenheim\ number of modal graph theory must be at least as large as every cardinal $\kappa$ whose size is expressible in modal graph theory. By the results of section \ref{Section.Expressive-power-of-modal-graph-theory}, these cardinals reach to the continuum, and in section \ref{Section.Interpretative-power-of-modal-graph-theory} we shall prove that they exceed the first $\beth$-fixed point and indeed the first $\beth$-hyper-fixed point.
\end{proof}

\begin{observation}
The compactness property can fail for $\possible\mathcal{L}$-theories (and hence also for $\mathcal{L}^{\possible}$ theories). Specifically, there is a theory $T$ in the language of modal graph theory $\possible\mathcal{L}_\sim$ that is finitely realized in the class of graphs but not fully realized.
\end{observation}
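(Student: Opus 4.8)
The plan is to play the $\possible\mathcal{L}_\sim$-expressibility of $2$-colorability, from Theorem~\ref{Theorem.2-colorability-is-expressible}, against the classical fact that although the class of bipartite graphs is cut out by the first-order theory asserting the absence of an odd cycle of each finite length, no finite fragment of that theory suffices. Concretely, let $\rho_2\in\possible\mathcal{L}_\sim$ be the sentence of Theorem~\ref{Theorem.2-colorability-is-expressible} for which $G\satisfies\rho_2$ exactly when $G$ is $2$-colorable, and for each $k\geq 1$ let $\gamma_{2k+1}$ be the ordinary first-order graph sentence $\exists x_0\cdots x_{2k}$ asserting that $x_0,\dots,x_{2k}$ are distinct and $x_0\sim x_1\sim\cdots\sim x_{2k}\sim x_0$, that is, that there is a cycle of length $2k+1$. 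Put
$$T\;=\;\{\neg\rho_2\}\;\cup\;\{\,\neg\gamma_{2k+1}\;:\;k\geq 1\,\},$$
which is a theory in $\possible\mathcal{L}_\sim$, since ordinary first-order sentences already lie in $\possible\mathcal{L}_\sim$. The claim will be that $T$ is finitely realized in the class of graphs but not fully realized.

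For non-realizability: any graph $G\satisfies T$ contains no odd cycle, hence is bipartite, hence $2$-colorable, so $G\satisfies\rho_2$ by Theorem~\ref{Theorem.2-colorability-is-expressible}, contradicting $G\satisfies\neg\rho_2$. For finite realizability: given a finite $T_0\subseteq T$, fix $M$ large enough that $\neg\gamma_{2k+1}\in T_0$ implies $k\leq M$, and let $G$ be the cycle graph on $2M+3$ vertices. Its only cycle is itself, of length $2M+3$, so $G\satisfies\neg\gamma_{2k+1}$ for every $k\leq M$; and being an odd cycle, $G$ is not $2$-colorable, so $G\satisfies\neg\rho_2$. Thus $G\satisfies T_0$, and since $T_0$ was arbitrary, $T$ is finitely realized but, by the previous paragraph, not realized.

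The part worth flagging, rather than the routine verifications above, is why the modal content has to be combined with infinitely many first-order sentences in precisely this shape. The positive sentence $\rho_2$ is by itself equivalent to a first-order theory --- namely $\{\neg\gamma_{2k+1}:k\geq 1\}$ --- so any theory built from $\rho_2$ together with first-order sentences would again be equivalent to a first-order theory, and would hence enjoy compactness; no such example can work. The leverage must instead come from $\neg\rho_2$, which is equivalent to the genuinely infinitary disjunction $\bigvee_k\gamma_{2k+1}$ and to no first-order theory at all. Recognizing this is really the only obstacle: once one sees that the failure of compactness is forced to live in the interaction between an infinitary-in-disguise modal sentence and the first-order ingredients, the concrete witness drops out of the classical non-finite-axiomatizability of bipartiteness. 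The same argument works with $\rho_k$ in place of $\rho_2$ for any $k\geq 2$, replacing the sentences $\neg\gamma_{2k+1}$ by the first-order sentences forbidding, for each finite graph of chromatic number exceeding $k$, a copy of that graph; that finite fragments remain realizable by non-$k$-colorable graphs follows from the existence of graphs of chromatic number exceeding $k$ with arbitrarily large girth.
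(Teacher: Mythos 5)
Your proof is correct and takes essentially the same route as the paper: the theory asserting non-$2$-colorability (via the modal sentence $\rho_2$) together with the first-order sentences forbidding cycles of each finite length, with large odd cycles witnessing finite realizability and bipartiteness of odd-cycle-free graphs blocking full realizability. The only difference is cosmetic---you forbid only the odd cycles where the paper forbids all finite cycle lengths---and your closing remarks on where the infinitary leverage must come from are a sound addendum rather than a change of argument.
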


\begin{proof}
Let $T$ assert that the graph is not $2$-colorable, but that there are no cycles of length $1$, none of length $2$, none of length $3$, and so on---there are no cycles of any particular finite length $n$. This theory is expressible in $\possible\mathcal{L}_\sim$, since theorem \ref{Theorem.2-colorability-is-expressible} shows that $2$-colorability is expressible in $\possible\mathcal{L}_\sim$, and the assertion that there is no $n$-cycle is expressible in $\mathcal{L}_\sim$ as a separate assertion for each $n$.

This theory is finitely satisfiable, because if only finitely many sizes of cycles are ruled out, we can satisfy the assertion with a large enough odd cycle, which will not be $2$-colorable. The theory altogether, however, is not satisfiable, since every cycle-free graph is $2$-colorable. One can color every node by the parity of the length of the shortest path to a given fixed node in each connected component.
\end{proof}

One can also naturally construct a violation of compactness using the fact that finiteness is expressible, since one can consider the theory asserting that the graph is finite, but has size at least $n$ for every particular finite $n$. This theory would indeed be finitely realizable and not realizable in the class of graphs. Since finiteness is expressible only in $\mathcal{L}_\sim^{\possible}$, however, this way of arguing would get the violation of compactness only in this stronger language, rather than in the weaker fragment $\possible\mathcal{L}$ as we have above.

Although we have presented these examples as violations of compactness, there is another sense in which we have not violated compactness here. Namely, to satisfy an assertion of the modal language does not mean just to provide a model $M$ in which it is true in the potentialist system $\Mod(T)$, but rather, to provide a whole new potentialist system, a class of models with an accessibility relation, not necessarily $\Mod(T)$ under direct extensions. The observation above, in contrast, is about realizing a theory in a graph of $\Mod(T)$, but always working in this same Kripke model. For this reason, one might look upon the counterexamples as akin to violations of saturation in this Kripke model, rather than violations of compactness for the modal logic.

\begin{observation}
The \Los\ theorem for ultraproducts can fail for assertions in the modal language $\possible\mathcal{L}$. Specifically, in modal graph theory there is a statement $\varphi\in\possible\mathcal{L}_\sim$ that holds in some graphs $G_n$, but fails in every nonprincipal ultraproduct $\prod_n G_u/U$.
\end{observation}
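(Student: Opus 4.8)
The plan is to revisit the odd-cycle construction used above for the failure of compactness, now read through the lens of $2$-colorability. Let $\rho_2\in\possible\mathcal{L}_\sim$ be the sentence of theorem~\ref{Theorem.2-colorability-is-expressible} expressing that a graph is $2$-colorable, and take $\varphi$ to be its negation $\neg\rho_2$, still an assertion of $\possible\mathcal{L}_\sim$. For $n\geq 1$ let $G_n$ be the cycle graph on $2n+1$ vertices. An odd cycle is not bipartite, so $G_n$ is not $2$-colorable and therefore $G_n\satisfies\varphi$ for every~$n$.

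First I would show that for every nonprincipal ultrafilter $U$ on $\omega$, the ultraproduct $\prod_n G_n/U$ contains no finite cycle. For each fixed $m\geq 3$ the assertion ``there are $m$ distinct vertices forming an $m$-cycle'' is first-order in $\mathcal{L}_\sim$, and it holds in $G_n$ if and only if $2n+1=m$: in a cycle graph each vertex has exactly two neighbors, so any cycle subgraph is forced to run through every vertex, and hence the only cycle present is the whole graph. The set $\set{n\st 2n+1=m}$ has at most one element and so is not in the nonprincipal ultrafilter $U$; by the classical \Los\ theorem the ultraproduct therefore omits every $m$-cycle. Since a graph is bipartite exactly when it has no odd cycle, $\prod_n G_n/U$ is $2$-colorable, so by theorem~\ref{Theorem.2-colorability-is-expressible} it satisfies $\rho_2$ and hence fails $\varphi$. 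This exhibits a $\possible\mathcal{L}_\sim$ sentence $\varphi$ true in each $G_n$ but false in every nonprincipal ultraproduct of them.

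There is essentially no obstacle here; the only thing to double-check is the elementary graph-theoretic fact that a cycle graph contains no proper cycle subgraph, which is what lets the classical \Los\ theorem kill all finite cycles in the limit. If one prefers to avoid even the bipartiteness criterion, one can note instead that ``every vertex has degree exactly $2$'' is first-order and hence true in the ultraproduct, so each connected component of $\prod_n G_n/U$ is a two-regular acyclic graph, that is, a copy of $\Z$, which is visibly $2$-colorable. Either route gives that $\neg\rho_2$ witnesses the failure of the \Los\ theorem for $\possible\mathcal{L}$.
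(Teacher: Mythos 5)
Your proposal is correct and follows essentially the same route as the paper: odd cycles of increasing size are not $2$-colorable, while their nonprincipal ultraproduct is a disjoint union of $2$-regular acyclic components (copies of $\Z$) and hence $2$-colorable, so $\neg\rho_2$ witnesses the failure. Your write-up merely spells out in more detail the first-order facts (absence of each fixed $m$-cycle, $2$-regularity) that the paper leaves implicit.
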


\begin{proof}
Let the graphs $G_n$ be odd-length cycles of increasing size. None of these graphs is $2$-colorable, but the ultraproduct consists of uncountably many disconnected $\Z$-chains, and this is $2$-colorable.
\end{proof}

\section{A quibble about the accessibility relation}

We should like to dispense with a certain issue about the accessibility relation in $\Mod(T)$, for there are actually two natural accessibility relations for $\Mod(T)$. On the one hand, it is natural to define possibility as we have above, as \emph{direct extension possibility}, using the submodel or direct extension relation $M\of N$, defining that $M\satisfies\possible\varphi[a]$ if and only if there is a model $N$ with $M\of N$ and $N\satisfies\varphi[a]$. This accessibility relation is very natural from the perspective of potentialism, by which one views each of the various models as a fragment approximating the larger universe to which they are building. On this account, individuals come into actual existence in a world and then persist through all subsequent larger worlds.

On the other hand, it is also natural to define a variant of this possibility notion using embeddability $M\ofsim N$ in place of direct extensions $M\of N$. Namely, the \emph{embedded extension possibility} operator is defined by: $M\satisfies\embedpossible\varphi[a]$ if and only if there is an embedded extension $M\ofsim N$, that is, a model $N$ with an embedding $j:M\to N$, for which $N\satisfies\varphi[j(a)]$. Thus, every individual in $M$ has a counterpart in $N$ via the embedding. Notice that there may be more than one embedding $j$ of $M$ into $N$, and so the models $M$ and $N$ alone do not necessarily determine the counterpart correspondence of their individuals, but rather the particular embedding $j:M\to N$.

As an accessibility relation, the embedded extension relation $M\ofsim N$ exhibits far better algebraic closure properties than direct extension $M\of N$. For example, in nontrivial cases the direct extension relation $M\of N$ is not directed, and neither does it exhibit amalgamation or convergence (see definitions in section \ref{Section.Modal-validities}), since two different extensions might happen to use the same individual object in incompatible ways. But the embedded extension relation often has all of these features, since the particular identity of individuals no longer matters when they are embedded into another common structure. From this perspective, the embedded extension relation is often more robust algebraically.

In terms of modal assertions, we are glad to say that in light of theorem \ref{Theorem.Direct-extension-possibility-same-as-embedded-extension} the entire issue is moot, because although as accessibility relations the notions are different, nevertheless it turns out that the corresponding modal operators coincide. In this sense, we can equivalently think of possibility with respect to either notion, and the result will be the same.

\begin{theorem}\label{Theorem.Direct-extension-possibility-same-as-embedded-extension}
In $\Mod(T)$ for any first-order theory, direct extension possibility and embedded extension possibility are equivalent for assertions $\varphi$ in the full modal language $\mathcal{L}^{\possible}$ with parameters from $M$.
$$M\satisfies\possible\varphi[a]\qquad\Iff\qquad M\satisfies\embedpossible\varphi[a]$$
Similarly, direct extension necessity and embedded extension necessity are also equivalent.
$$M\satisfies\necessary\varphi[a]\qquad\Iff\qquad M\satisfies\embednecessary\varphi[a]$$
\end{theorem}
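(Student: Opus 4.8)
The plan is to prove the equivalence by induction on the complexity of $\varphi\in\mathcal{L}^{\possible}$, simultaneously for all models $M\satisfies T$ and all parameter tuples $a$, comparing the two Kripke-style semantics on $\Mod(T)$: the one in which every modal operator is read as direct-extension possibility/necessity (diamond $\possible$), and the one in which every modal operator is read as embedded-extension possibility/necessity (diamond $\embedpossible$), where in the latter the parameters are tracked through the witnessing embeddings. The atomic, Boolean and quantifier cases are immediate — in the quantifier case the domain of $M$ does not change — so all the content is in the modal step.

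For the modal step, one direction is trivial: a direct extension $M\of N$ is in particular an embedded extension, via the inclusion map, so whenever $M$ satisfies $\possible\psi[a]$ in the direct-extension sense it also does so in the embedded-extension sense, applying the induction hypothesis inside $N$. The substantive direction is the converse. Suppose $M$ satisfies $\embedpossible\psi[a]$, witnessed by an embedding $j\colon M\to N$ with $N\satisfies\psi[j(a)]$ in the embedded-extension sense, hence by the induction hypothesis also in the direct-extension sense. First renaming $N$ if necessary so that its elements outside $j[M]$ are disjoint from $M$, I would form the structure $N'$ whose universe is $M$ together with those elements of $N$ not in the range of $j$, transporting the $\mathcal{L}$-structure of $N$ along the bijection $\pi\colon N\to N'$ that acts as $j^{\inverse}$ on $j[M]$ and as the identity elsewhere. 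Then $N'\iso N$ via $\pi$ is an isomorphism of models of $T$, the structure $M$ is literally a substructure of $N'$ (here we use that $j$ was an embedding, so the structure induced on $M$ inside $N'$ agrees with $M$'s own), and $\pi(j(a))=a$ coordinatewise. By the renaming lemma, $N'\satisfies\psi[a]$ in the direct-extension sense, and since $M\of N'$ this shows $M$ satisfies $\possible\psi[a]$ directly, as desired.

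The necessity equivalence then follows by duality, since $\necessary\varphi$ is $\neg\possible\neg\varphi$ and $\embednecessary\varphi$ is $\neg\embedpossible\neg\varphi$; alternatively one can carry the box case through the same induction, the argument being symmetric.

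I do not expect a serious obstacle. The only point needing a moment's care is the bookkeeping in the modal step: arranging the disjointness so that $N'$ genuinely contains $M$ as a substructure rather than merely an isomorphic copy, and then invoking the renaming lemma to transfer satisfaction of the subformula $\psi$ across the isomorphism $\pi$. Everything else is routine verification against the inductive clauses.
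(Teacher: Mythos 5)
Your proposal is correct and uses essentially the same idea as the paper: an embedded extension is turned into a direct extension of an isomorphic copy, with the renaming lemma transferring truth across the isomorphism. The only cosmetic differences are that you rename $N$ downward so that $M$ sits literally inside $N'$ (the paper instead moves $M$ up to $j\image M\of N$ and applies the renaming lemma to the whole assertion $\possible\varphi$), and that you carry out explicitly the induction on nested modalities that the paper defers to the remark following its proof.
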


\begin{proof}
We aim to prove that $M\satisfies\possible\varphi[a]$ if and only if $M\satisfies\embedpossible\varphi[a]$. The forward direction is immediate, since direct extensions $M\of N$ are instances of embedded extensions $j:M\to N$ using the inclusion embedding $j(x)=x$. Conversely, if $M\satisfies\embedpossible\varphi[a]$, then there is some embedded extension $j:M\to N$ with $N\satisfies\varphi[j(a)]$. The model $M$ is isomorphic via $j$ with the range of the embedding $M'=j\image M$, and this model directly extends to $N$. So $M'\satisfies\possible\varphi[j(a)]$, and therefore $M\satisfies\possible\varphi[a]$ by the renaming lemma, as desired. The necessity case now follows by duality.
\end{proof}

It follows inductively that complex formulas involving $\embedpossible$ and $\embednecessary$ have the same meaning as the corresponding assertions using $\possible$ and $\necessary$.

Going beyond this, Sam Adam-Day has proved (in forthcoming work) that the two potentialist systems are actually bisimilar, which completely explains why they exhibit the same modal truths.

\begin{theorem}[Adam-Day]
For any first-order theory $T$, the potentialist systems consisting of $\Mod(T)$ under the direct extension relation or under the embedded extension relation, are bisimilar. Therefore, they exhibit exactly the same modal truths.
\end{theorem}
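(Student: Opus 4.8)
The plan is to exhibit an explicit bisimulation $Z$ between the two Kripke models and then appeal to the standard fact that bisimilar (pointed) potentialist systems validate the same modal assertions. Write $\mathcal{W}_{\of}=(\Mod(T),\of)$ and $\mathcal{W}_{\ofsim}=(\Mod(T),\ofsim)$ for the two systems. To cope with parameters uniformly, I would work with \emph{pointed} worlds $(M,\bar a)$, where $\bar a$ is a finite tuple of elements of $M$, and define $Z$ by stipulating that $(M,\bar a)\mathrel{Z}(N,\bar b)$ exactly when there is an $\mathcal{L}$-isomorphism $\pi\colon M\iso N$ with $\pi(\bar a)=\bar b$. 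Atomic harmony is then immediate, and since $Z$ is visibly closed under adjoining corresponding parameters (if $\pi$ witnesses $(M,\bar a)\mathrel Z(N,\bar b)$ and $c\in M$, then $\pi$ also witnesses $(M,\bar a c)\mathrel Z(N,\bar b\,\pi(c))$), the quantifier and Boolean cases of the usual bisimulation-invariance induction will go through; the renaming lemma is essentially the special case that underlies the base step. So the whole content lies in the forth and back conditions for the modal step.

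The forth condition from $\mathcal{W}_{\of}$ into $\mathcal{W}_{\ofsim}$ is trivial, since $M\of M^+$ gives, after composing with $\pi^{-1}$, an embedding $N\into M^+$, i.e.\ $(N,\bar b)\xrightarrow{\ofsim}(M^+,\bar a)$, with $(M^+,\bar a)\mathrel Z(M^+,\bar a)$ via the identity. The remaining clauses --- forth from $\mathcal{W}_{\ofsim}$ into $\mathcal{W}_{\of}$, and the back conditions --- all reduce to a single relabelling construction, precisely the one used in the proof of theorem \ref{Theorem.Direct-extension-possibility-same-as-embedded-extension}: given an embedding $j\colon N\to N^+$ and an isomorphism $\pi\colon M\iso N$, I want a genuine direct extension $M\of M^+$ together with an isomorphism $M^+\iso N^+$ extending $j\circ\pi$. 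One takes $M^+$ to have underlying set $M\cup(N^+\setminus(j\circ\pi)\image M)$ --- renaming elements as needed so this union is disjoint in the right way --- and transports the $\mathcal{L}$-structure of $N^+$ along the bijection which is $j\circ\pi$ on $M$ and the identity elsewhere. Because $j\circ\pi$ is an embedding, the structure induced on the $M$-part is exactly that of $M$, so $M\of M^+$ is a direct extension, and by construction $M^+\iso N^+$ over the parameters. Applying this with the two systems' roles interchanged supplies every outstanding clause.

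Once $Z$ is verified to be a bisimulation --- in particular linking $(M,\bar a)$ in $\mathcal{W}_{\of}$ with the same pointed world in $\mathcal{W}_{\ofsim}$, via $\pi=\id$ --- the invariance induction on $\mathcal{L}^{\possible}$ formulas yields $M\satisfies\varphi[\bar a]$ in one system iff in the other, which is the asserted equality of modal truths; bisimilarity of the bare frames (and of $\possible\mathcal{L}$ or sentential fragments) is the parameter-free special case. The step I expect to demand the most care is not conceptual but bookkeeping: arranging the relabelling so that the two ``halves'' of $M^+$ honestly form a set with $M$ as a literal submodel and no accidental identifications, all while keeping the designated parameters fixed --- exactly the point theorem \ref{Theorem.Direct-extension-possibility-same-as-embedded-extension} already had to handle, here promoted from a one-step equivalence to a global relation. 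A secondary subtlety worth isolating is pinning down the right notion of bisimulation for a first-order (rather than purely propositional) potentialist system, namely one acting on pointed worlds and cohering with a family of isomorphisms; with that definition fixed, the argument above is routine.
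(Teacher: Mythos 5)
Your proposal is correct, but it cannot be matched against a proof in the paper, because the paper offers none: this theorem is attributed to Sam Adam-Day's forthcoming work and is stated without argument. What you have written is a plausible reconstruction. The closest material in the paper is theorem \ref{Theorem.Direct-extension-possibility-same-as-embedded-extension}, which obtains only the weaker conclusion (coincidence of the modal operators, and hence by induction of $\mathcal{L}^{\possible}$-truth) by a direct formula induction: the single observation that an embedded extension $j:M\to N$ makes $N$ a direct extension of the isomorphic copy $j\image M$, plus the renaming lemma. Your route is genuinely different in packaging: you globalize that same relabelling trick into an explicit relation $Z$ on pointed worlds (related iff isomorphic over the parameters), verify atomic harmony, closure under adjoining parameters, and the forth/back clauses, and then invoke bisimulation invariance once. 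What this buys is exactly the stronger statement being claimed---bisimilarity of the two systems as Kripke structures, independent of any particular language of formulas---whereas the paper's induction only certifies truth-equivalence formula by formula. The two points you flag as delicate are the right ones: the set-theoretic bookkeeping in forming $M\union(N^+\setminus (j\circ\pi)\image M)$ (one must first replace $N^+$ by a copy whose new elements are disjoint from $M$, or the transport map is not well defined), and the choice of a first-order-adequate notion of bisimulation acting on pointed worlds; your isomorphism-based $Z$ is the strongest such notion and clearly suffices for the quantifier steps. I see no gap, only the caveat that without Adam-Day's published argument one cannot certify that this is \emph{his} proof rather than an independent one.
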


\section{Modal validities}\label{Section.Modal-validities}

One of the central research tasks of modal model theory is to discover the modal principles and validities that hold in a given potentialist system and the models in it. This has been the focus of prior work on set-theoretic potentialism \cite{Hamkins2003:MaximalityPrinciple}, \cite{HamkinsLoewe2008:TheModalLogicOfForcing, HamkinsLoewe2013:MovingUpAndDownInTheGenericMultiverse}, \cite{HamkinsLeibmanLoewe2015:StructuralConnectionsForcingClassAndItsModalLogic},
\cite{HamkinsLinnebo:Modal-logic-of-set-theoretic-potentialism},
\cite{HamkinsWilliams:The-universal-finite-sequence},
\cite{HamkinsWoodin:The-universal-finite-set}, and arithmetic potentialism \cite{Hamkins:The-modal-logic-of-arithmetic-potentialism}. In modal model theory generally, we should like to move beyond set theory and arithmetic to the models of any given theory, to graphs, groups, fields, orders, and what have you. We aim to discover which modal principles are valid in $\Mod(T)$ for a first-order theory $T$, considering this as a potentialist system. A key definition is the following:

\begin{definition}\label{Definition.Valid}\upshape
A modal assertion $\varphi(p_1,\ldots,p_n)$, with propositional variables $p_i$, is \emph{valid} at a world $M$ in a potential system $\mathcal{W}$ for an allowed language of instances, if all substitution instances $\varphi(\psi_1,\ldots,\psi_n)$ arising for $\psi_i$ in that language are true at $M$ in $\mathcal{W}$.
\end{definition}

Thus, a modal validity $\varphi(p_0,\ldots,p_n)$ stands as a template scheme for all the modal truths $\varphi(\psi_0,\ldots,\psi_n)$ that arise by substituting the propositional variables $p_i$ with allowed substitution assertions $\psi_i$. Whether a modal assertion is valid or not is often highly sensitive to the particular language of allowed substitutions $\psi$, for instance, whether we are allowing $\psi$ only from the language $\mathcal{L}$, or from $\possible\mathcal{L}$ or $\mathcal{L}^{\possible}$, or whether parameters from the model $M$ are allowed. When speaking of a modal validity, therefore, one must take care to clarify precisely the class of substitution instances for which validity is asserted.

For this reason, we should particularly like to emphasize that when we say a modal theory such as \theoryf{S4.2} is valid in a model $M$ of a potentialist system, we are treating \theoryf{S4.2} as a propositional modal theory, that is, as a set of assertions in propositional modal logic $\mathcal{P}$, rather than as a logic, as a proof system with rules of inference. In general, because the validities are sensitive to the precise class of substitution instances and the allowed parameters, they do not always interact well with the inference rules typically used when defining a modal logic, if those rules would be taken to be applicable generally in the broader predicate-logic modal context with variables and quantifiers. For example, a modal principle $\varphi(a)$ can be valid at a model $M$ using any parameter $a$ from that model, but not valid in some extension $M\of N$ using a parameter from the extension $N$ (see remarks after theorem \ref{Theorem.MP-for-graphs} for a specific example). In this case, $\forall x\, \varphi(x)$ will be valid at $M$, but not $\necessary\forall x\,\varphi(x)$, and therefore the validities at $M$ are not closed under necessitation, even when the modal theory \theoryf{S5} is valid there in the sense of definition \ref{Definition.Valid}.

Let us begin by reviewing some easy lower bounds. Consider the potentialist system consisting of the class $\Mod(T)$ of all models of a fixed first-order theory $T$. Before stating the theorem, we recall some useful terminology, particularly on the distinction between convergence and amalgamation. The class of models $\Mod(T)$ exhibits embedding \emph{convergence}, if for any model $M$ and embeddings $M\ofsim N_0$ and $M\ofsim N_1$, then there is a model $N$ with $N_0\ofsim N$ and $N_1\ofsim N$. Convergence is thus a form of local directedness, since it asserts that models $N_0$ and $N_1$ have a common embedding extension, provided that these models are themselves embedding extensions of a common model $M$. Note that for convergence there is no requirement that the diagram of embeddings commutes. This is the key difference between convergence and amalgamation and is why convergence leads only to validities for sentences rather than also for assertions with parameters. Namely, the class of models has \emph{amalgamation}, if whenever $j_0:M\to N_0$ and $j_1:M\to N_1$ are embeddings, there there are embeddings $h_0:N_0\to N$ and $h_1:N_1\to N$ to a common model $N$, such that the diagram commutes, meaning $h_0\circ j_0=h_1\circ j_1$.\goodbreak

\begin{theorem}\label{Theorem.Validities-lower-bounds}\
 \begin{enumerate}
   \item The modal theory \theoryf{S4} is valid at every model in $\Mod(T)$ with respect to all substitution instances, as is every instance of the converse Barcan formula: $$\necessary\forall x\,\varphi(x)\implies\forall x\,\necessary\varphi(x)$$
   \item If the class of models of $T$ is convergent under embeddability $\ofsim$, then \theoryf{S4.2} is valid at every model in $\Mod(T)$ for $\mathcal{L}^{\possible}$-sentences.
   \item If the class of models of $T$ exhibits amalgamation, then \theoryf{S4.2} is valid at every model $M$ in $\Mod(T)$ for $\mathcal{L}^{\possible}$-assertions with parameters from $M$.
   \item If the models of $T$ are linearly pre-ordered by embeddability $\ofsim$, then \theoryf{S4.3} is valid at every model $M$ in $\Mod(T)$ for $\mathcal{L}^{\possible}$-assertions with parameters from $M$.
 \end{enumerate}
\end{theorem}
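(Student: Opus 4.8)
The plan is to read each clause as a \emph{soundness} statement. Once the substituends $\psi_i$ are fixed (together with whatever parameters are permitted), they determine at each relevant world a truth value, and a propositional modal formula is then evaluated at $M$ by ordinary Kripke semantics on the sub-frame of worlds accessible from $M$---indeed modal satisfaction in $\Mod(T)$ \emph{is} Kripke satisfaction for the accessibility relation at hand. Since \theoryf{S4}, \theoryf{S4.2}, \theoryf{S4.3} are sound over the classes of preorders, directed preorders, and weakly connected (no branching to the right) preorders respectively, it suffices in each case to produce the appropriate frame with the appropriate closure property and to check that it computes the intended modal truths.

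For (1): the relation $\of$ on the worlds lying $\of$-above $M$ in $\Mod(T)$ is reflexive ($M\of M$) and transitive, hence a preorder, so every substitution instance of \theoryf{S4} is true at $M$, whether the substituends are drawn from $\mathcal{L}$, $\possible\mathcal{L}$ or $\mathcal{L}^{\possible}$, with or without parameters. The converse Barcan schema is not propositional and is checked directly: it holds because domains are \emph{increasing} along $\of$, so if $M\satisfies\necessary\forall x\,\varphi(x)$ and $a\in M$, then $a$ belongs to every extension $N$ of $M$, each of which satisfies $\forall x\,\varphi(x)$, whence $N\satisfies\varphi(a)$ and thus $M\satisfies\forall x\,\necessary\varphi(x)$.

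For (2)--(4): the direct-extension frame $\of$ is in general neither directed nor connected, so we pass, using theorem~\ref{Theorem.Direct-extension-possibility-same-as-embedded-extension} together with its inductive corollary (so that nested and Boolean combinations are unaffected), to the embedded-extension picture. For \emph{sentences} this is a genuine Kripke frame on models, with worlds $N$ satisfying $M\ofsim N$ and accessibility ``there is an embedding'', which is a preorder; the convergence hypothesis says exactly that this sub-frame is directed, so \theoryf{S4.2} is sound there, giving (2). For \emph{assertions with parameters $a$ from $M$} the particular embedding matters, so I would instead work in the \emph{pointed} frame whose worlds are embeddings $j\colon M\to N$, with $(N,j)$ accessing $(N',j')$ exactly when some embedding $h\colon N\to N'$ has $hj=j'$---equivalently, one may first replace the parameters by constant symbols and invoke theorem~\ref{Theorem.Expansions} to reduce to sentences in the expanded language. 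One verifies by induction on $\varphi\in\mathcal{L}^{\possible}$, using theorem~\ref{Theorem.Direct-extension-possibility-same-as-embedded-extension} and the renaming lemma, that truth of $\varphi(a)$ at $M$ in $\Mod(T)$ agrees with Kripke truth of $\varphi$ at $(M,\id_M)$ in the pointed frame, where each substituend $\psi_i$ is interpreted at $(N,j)$ as $N\satisfies\psi_i[j(a)]$. Amalgamation then says the pointed frame is directed, yielding \theoryf{S4.2} for parameterized instances, which is (3); and linearity of $\ofsim$ is what should make the relevant frame weakly connected, yielding \theoryf{S4.3}, which is (4).

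The routine but essential chore in (3) and (4) is the induction just described, confirming that the pointed/embeddability frame faithfully models the full modal language. The genuine difficulty lies in (4): from bare linearity of $\ofsim$ on $\Mod(T)$ one gets, for direct extensions $M\of N_0$ and $M\of N_1$, only \emph{some} embedding between $N_0$ and $N_1$, whereas weak connectedness of the pointed frame demands an embedding \emph{compatible with the common submodel} $M$ (a commuting triangle over $M$). I would isolate this as a lemma: if $\Mod(T)$ is linearly pre-ordered by $\ofsim$, then for every $M$ and all direct extensions $M\of N_0$, $M\of N_1$, either the inclusion $M\hookrightarrow N_1$ extends to an embedding $N_0\to N_1$ or the inclusion $M\hookrightarrow N_0$ extends to an embedding $N_1\to N_0$. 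This reconciliation step is the crux; once it is secured, soundness of \theoryf{S4.3} over weakly connected preorders finishes the argument exactly as in (2)--(3).
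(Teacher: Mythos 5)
Your strategy is the same as the paper's: the paper gives only ``proof hints,'' citing reflexivity and transitivity of $\of$ for \theoryf{S4}, inflationary domains for the converse Barcan scheme, theorem \ref{Theorem.Direct-extension-possibility-same-as-embedded-extension} to transfer between $\ofsim$ and $\of$, convergence for the sentential \theoryf{S4.2} axiom, amalgamation (via commutativity of the diagram, to track the parameter $a$) for the parameterized case, and linearity for \theoryf{S4.3}. Your treatment of (1)--(3) fills in exactly these hints correctly; in particular your pointed frame of embeddings $j\colon M\to N$ is just a way of packaging the paper's remark that the commuting amalgamation diagram lets you know ``it is the same $a$ being referred to around both sides.''

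The one substantive point is your discussion of (4), where you have put your finger on something the paper's hint passes over in silence: the \theoryf{S4.3} axiom with a parameter $a$ requires, given direct extensions $M\of N_0\satisfies\varphi(a)$ and $M\of N_1\satisfies\psi(a)$, an embedding of one of $N_0$, $N_1$ into the other \emph{that respects $a$} (equivalently, extends the inclusion of $M$), whereas bare linearity of $\ofsim$ supplies only some embedding $j\colon N_0\to N_1$, under which $N_1\satisfies\psi(a)$ tells you nothing about $\psi(j(a))$. You correctly isolate the needed reconciliation lemma, but you do not prove it, so your argument for (4) is incomplete as written; to finish one must either establish that lemma from linearity (or from linearity together with the amalgamation already present in (3)), or else restrict (4) to sentences, where your argument for (2) adapts verbatim since no counterpart tracking is needed. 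Since the paper itself leaves all details as an exercise, this is a gap you share with the source rather than a divergence from it, but it is the one step in your write-up that is asserted rather than demonstrated.
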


Note that the theorem is concerned with the convergence and amalgamation properties using the embedding extension relation $\ofsim$, rather than direct extension $\of$, and this is important, since $\of$ is almost never convergent, while in important cases the embedding extension relation $\ofsim$ exhibits both convergence and amalgamation.

\begin{proof}[Proof hints]
We leave the proof details as an exercise in modal reasoning for the reader. For the first part of statement (1), use that the direct extension relation $\of$ is reflexive and transitive; for the converse Barcan scheme, use that the domains are inflationary with respect to the accessibility relation. For statement (2), the main fact is that convergence is sufficient to establish the validity of $\possible\necessary\varphi\to\necessary\possible\varphi$ for any sentence $\varphi$, using theorem \ref{Theorem.Direct-extension-possibility-same-as-embedded-extension} to transfer from $\ofsim$ to $\of$. If there are parameters involved, however, as in statement (3), then one should use amalgamation to verify $\possible\necessary\varphi(a)\to\necessary\possible\varphi(a)$; the commutativity of the diagram enables one to know that it is the same $a$ being referred to around both sides of the diagram. For statement (4), linearity is sufficient to verify the validity of the \theoryf{S4.3} axiom $(\possible\varphi\wedge\possible\psi)\to[\possible(\varphi\wedge\possible\psi)\vee\possible(\psi\wedge\possible\varphi)]$.
\end{proof}

Let us turn now to the more difficult issue of providing upper bounds on the validities of a potentialist system. A general method has emerged in a series of papers \cite{HamkinsLoewe2008:TheModalLogicOfForcing}, \cite{HamkinsLeibmanLoewe2015:StructuralConnectionsForcingClassAndItsModalLogic},
\cite{HamkinsLinnebo:Modal-logic-of-set-theoretic-potentialism}, \cite{HamkinsWilliams:The-universal-finite-sequence}, \cite{HamkinsWoodin:The-universal-finite-set}, \cite{Hamkins:The-modal-logic-of-arithmetic-potentialism}, developing the \emph{control statement} technique of establishing upper bounds on the modal validities of a potentialist system. With this method, as in theorem \ref{Theorem.Control-statement-upper-bounds}, one uses the existence of various kinds of control statements in the potentialist system---buttons, switches, dials, ratchets, or railyards---to establish upper bounds on the class of modal validities of the system.

The main advantage of the control statement method is that it enables one to analyze and discover the modal validities of a potentialist system by using principally only expertise in the subject matter of the object theory, rather than technical expertise in the foundations of modal logic. To construct control statements for potentialist conceptions in set theory, group theory, or graph theory generally requires only set-theoretic, group-theoretic, or graph-theoretic ideas, respectively. With those ideas and with theorem \ref{Theorem.Control-statement-upper-bounds}, one can often establish important facts about the modal validities of one's potentialist conception.

Let us quickly review some of the key kinds of control statements. A \emph{button} is a statement $\varphi$ that is possibly necessary, that is, for which $\possible\necessary\varphi$; it is \emph{pushed} when $\necessary\varphi$ holds, otherwise unpushed. A \emph{switch} is a statement $\psi$ for which $\necessary(\possible\psi\wedge\possible\neg\psi)$. A \emph{dial} is a sequence of statements $d_1$, \ldots, $d_n$, such that necessarily, exactly one of the statements is true, and any of them is possible. A \emph{ratchet} is a sequence of button statements $r_1$, \ldots, $r_n$ such that each is possibly necessary, each necessarily implies the previous, and $\bigwedge_{k<n}\possible(\neg r_{k+1}\wedge\necessary r_k)$, so that each can be pushed without pushing the next. A \emph{railway switch} is a statement $r$ for which both $\necessary r$ and $\necessary \neg r$ are possible. A \emph{railyard} is an assignment of statements to a finite tree pre-order, such that the possibility relations of the statements are exactly in accordance with the tree order. A family of control statements are \emph{independent}, if necessarily they can be operated without interference with one another. For example, a family of buttons and switches are independent, if necessarily, one can push exactly any desired button (and no others) while not affecting the switches and also one can set the switches as desired without pushing any buttons.

As we had mentioned, the significance of the existence of these kinds of control statements in a potentialist system is that they provide natural upper bounds on the collection of validities, as summarized in the following theorem:

\begin{theorem}\label{Theorem.Control-statement-upper-bounds}%
Assume $\mathcal{W}$ is a potentialist system.\nobreak
  \begin{enumerate}
    \item If a world $M$ in $\mathcal{W}$ has arbitrarily large finite families of independent switches, then the modal validities at $M$ are contained within S5.
    \item If a world $M$ in $\mathcal{W}$ has arbitrarily large finite families of independent buttons and switches, or independent buttons and dials, then the modal validities are contained within \theoryf{S4.2}.
    \item If a world $M$ in $\mathcal{W}$ has arbitrarily long ratchets, independent of switches and dials (as many as desired), then the modal validities are contained within \theoryf{S4.3}.
    \item If a world $M$ in $\mathcal{W}$ admits railyard labelings of every finite pre-tree, then the modal validities at $M$ are exactly \theoryf{S4}.
  \end{enumerate}
In each case, the relevant language of substitution instances would be any language containing the control statements and closed under Boolean connectives.
\end{theorem}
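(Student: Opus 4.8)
The plan is to run, for each clause, the control-statement argument familiar from the forcing and arithmetic cases: argue contrapositively, so that it suffices to show that any propositional modal assertion $\varphi(p_0,\dots,p_n)$ not provable in the target logic $\Lambda$---namely \theoryf{S5}, \theoryf{S4.2}, \theoryf{S4.3}, \theoryf{S4} for clauses (1)--(4) respectively---fails at $M$ under some substitution instance $\varphi(\psi_0,\dots,\psi_n)$ in the allowed language. The input from modal logic proper is the classical finite-frame completeness of these systems: \theoryf{S5} is complete with respect to finite clusters (finite sets with the total accessibility relation), \theoryf{S4.2} with respect to finite directed reflexive transitive frames, \theoryf{S4.3} with respect to finite linear pre-orders, and \theoryf{S4} with respect to finite pre-trees. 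So if $\varphi\notin\Lambda$, there are a finite frame $F$ of the corresponding class, a world $w_0\in F$, and a propositional assignment making $\varphi$ false at $w_0$.

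The heart of the matter is then a simulation lemma in each case: if $M$ carries arbitrarily large finite families of the relevant independent control statements, then for every finite frame $F$ of the corresponding class one can assign to each propositional variable $p_i$ a Boolean combination $\psi_i$ of control statements, and define a surjective order-preserving map $f$---a p-morphism---from the collection of $\mathcal W$-worlds accessible from $M$ onto $F$, with $f(M)=w_0$, such that $\psi_i$ holds at a world $N$ precisely when $p_i$ is true at $f(N)$. A routine induction on $\varphi$ then shows that $N\satisfies\varphi(\vec\psi)$ if and only if $F,f(N)\satisfies\varphi$; in particular $M\not\satisfies\varphi(\vec\psi)$, so $\varphi$ is not valid at $M$ for the given class of instances. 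This gives the containment $\Lambda\supseteq(\text{validities at }M)$. In clause (4) this yields only the containment within \theoryf{S4}, but \theoryf{S4} is already a lower bound by theorem \ref{Theorem.Validities-lower-bounds}(1), so the validities are exactly \theoryf{S4}.

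It remains to build $f$ and the $\psi_i$ from the control statements, and here one proceeds exactly as in the set-theoretic treatments. For (1), $k$ independent switches generate, from any world, a reachable family of $2^k$ mutually accessible configurations indexed by bit-patterns of switch values; choosing $2^k\geq|F|$ and any surjection of bit-patterns onto $F$, with $\psi_i$ a disjunction of the state descriptions mapping to $p_i$-points, gives the precover. For (2), a configuration records which buttons are pushed (an upward-closed datum, since pushed buttons stay pushed) together with the current switch or dial setting; two reachable configurations have a common extension obtained by pushing every button pushed on either side, so the generated frame is directed, and with enough buttons and switches (or dials) it p-morphically covers any finite directed $F$. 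For (3), a ratchet of length $k$ independent of switches and dials generates a linearly ordered chain of button-configurations, covering any finite linear pre-order once the cluster sizes are matched by the switch/dial factor. For (4), a railyard labeling of a pre-tree isomorphic to $F$ directly furnishes the required labeling and precover. In each case the ``forth'' condition of the p-morphism is the monotonicity built into the configurations, and the ``back'' condition---that every $F$-successor of $f(N)$ is realized by some $\mathcal W$-extension of $N$---is exactly the statement that the control statements can be operated to reach any admissible configuration, which is what independence of the family supplies.

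The main obstacle is not the transfer to $\Mod(T)$, which is formal, but the underlying combinatorics of finite modal frames: one must verify both the finite-frame completeness of each $\Lambda$ and the assertion that the canonical frame generated by $k$ independent control statements of the given type p-morphically covers every finite frame of the matching class whose size is within reach of $k$. These are standard but somewhat delicate---for instance, that every finite directed reflexive transitive frame is a p-morphic image of a product of a Boolean ``button cube'' with a switch-or-dial factor, and that every finite pre-tree arises inside a railyard labeling---and once they are in place, the conclusions of (1)--(4) follow by the contrapositive argument above, for any language of substitution instances that contains the control statements and is closed under Boolean connectives.
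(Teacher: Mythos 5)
The paper does not prove this theorem itself---it defers to \cite{HamkinsLeibmanLoewe2015:StructuralConnectionsForcingClassAndItsModalLogic} for statements (1)--(3) and to \cite{Hamkins:The-modal-logic-of-arithmetic-potentialism} for statement (4)---and your sketch is a faithful outline of exactly the argument used in those sources: finite-frame completeness of the target logic combined with a labeling (p-morphism) of the worlds reachable from $M$ onto the finite counterexample frame, built from the independent control statements. So the proposal is correct and takes essentially the same approach as the cited proof.
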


For proof of theorem \ref{Theorem.Control-statement-upper-bounds}, we refer the reader to \cite{HamkinsLeibmanLoewe2015:StructuralConnectionsForcingClassAndItsModalLogic} for statements (1), (2), and (3), and to
\cite{Hamkins:The-modal-logic-of-arithmetic-potentialism} for statement (4).

Let us illustrate these ideas in the case of modal graph theory.

\begin{theorem}\label{Theorem.Graph-validities}
 In graphs,
 $$\theoryf{S4.2}\of\text{ modal validities of graphs }\of \theoryf{S5}.$$
Indeed, for validities with respect to substitution instances in the language of graph theory with parameters, every graph validates either exactly \theoryf{S4.2} or exactly \theoryf{S5}, and both cases are realized. Specifically:
\begin{enumerate}
  \item The modal theory \theoryf{S4.2} is valid in any graph for any assertion at all, with parameters.
  \item The validities of any graph, with respect even just to substitution instances for sentences in the language of graph theory, is contained within \theoryf{S5}.
  \item Some graphs have their validities exactly \theoryf{S4.2}, with respect even just to sentences in the language of graph theory.
  \item Some graphs have \theoryf{S5} as valid, even for assertions in the full modal language of graph theory $\mathcal{L}_\sim^{\possible}$, with parameters.
  \item For validities with respect to substitution instances in the language of graph theory with parameters, every graph validates either exactly \theoryf{S4.2} or exactly \theoryf{S5}.
\end{enumerate}
\end{theorem}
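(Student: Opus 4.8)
The plan is to prove the five numbered statements, from which the displayed inclusions and the dichotomy follow as immediate combinations of (1), (2), (5). Statement (1) I would obtain straight from Theorem~\ref{Theorem.Validities-lower-bounds}(3): the class of all graphs has amalgamation under embeddability. Given embeddings $j_0\colon G\to G_0$ and $j_1\colon G\to G_1$, take $N$ to be the pushout of the vertex sets (glue $G_0$ and $G_1$ along the common copy of $G$), with two vertices adjacent in $N$ exactly when the corresponding vertices are adjacent in $G_0$ or in $G_1$. The induced maps $h_0\colon G_0\to N$ and $h_1\colon G_1\to N$ are embeddings --- the only way a ``new'' edge could appear among the vertices of $G_0$ is via $G_1$, which forces both endpoints into the image of $G$ and, since $j_1$ is an embedding, back into an edge of $G_0$ --- and the square commutes, so Theorem~\ref{Theorem.Validities-lower-bounds}(3) gives $\theoryf{S4.2}$ at every graph for every assertion of $\mathcal{L}_\sim^{\possible}$ with parameters.

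For statement (2) I would exhibit, at an arbitrary graph and for each $n$, a family of $n$ independent switches in $\mathcal{L}_\sim$ and then apply Theorem~\ref{Theorem.Control-statement-upper-bounds}(1). For $i\geq 2$ let $\psi_i$ say ``there is a connected component isomorphic to the $i$-leaf star''; this is first-order (one quantifies over the centre and its $i$ leaves and forbids further neighbours of any of them, which already forces the component to be exactly that star). It is a switch at every graph: add a fresh disjoint $i$-leaf star to turn it on, and add an edge from the centre of each existing such component to some other vertex to turn it off (this kills all such components and, since the merged pieces are trees that are not stars, creates none). Distinct stars are non-isomorphic and none sits inside another, and every operation can be redone over any extension, so the $\psi_i$ form an independent family in the necessitated sense. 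For statement (3) I would work at the empty graph (or any infinite forest) and add, for each $n$, the buttons $\beta_k=$ ``there is a cycle of length $2k+1$'' for $k\leq n$: these are unpushed there, each becomes permanently true on adding a disjoint odd cycle, distinct odd cycles do not contain one another, and odd cycles are never stars while tree edges never make cycles --- so the $\beta_k$ together with the $\psi_i$ are an independent family of buttons and switches in $\mathcal{L}_\sim$. Theorem~\ref{Theorem.Control-statement-upper-bounds}(2) caps the validities at $\theoryf{S4.2}$, and with (1) they are exactly $\theoryf{S4.2}$, even for $\mathcal{L}_\sim$-sentences.

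Statements (4) and (5) I would organize around the following: a graph $G$ validates $\theoryf{S5}$ with parameters for $\mathcal{L}_\sim$ exactly when $G$ is existentially closed in the class of all graphs, and for the full modal language exactly when in addition $|G|$ exceeds every cardinal $\lambda$ for which ``$|\cdot|=\lambda$'' is expressible in $\mathcal{L}_\sim^{\possible}$ (such $G$ exist, since $\mathcal{L}_\sim^{\possible}$ has only countably many sentences and hence expresses only countably many cardinalities). The ``only if''/dichotomy direction is the easy one: if $G$ fails to realise some finite pattern $P$ over some tuple $\bar a$, then ``nothing realises $P$ over $\bar a$'' holds at $G$ but fails after the pushable extension that adds such a witness, so $p\to\necessary\possible p$ fails; and by further decorating a witness of $P$ over $\bar a$ with an arbitrary pattern over a fresh tuple one gets, at any non-existentially-closed $G$, arbitrarily large independent families of unpushed buttons, still independent of the star-switches --- hence by Theorem~\ref{Theorem.Control-statement-upper-bounds}(2) and statement (1) such a $G$ has validities exactly $\theoryf{S4.2}$. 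Since existentially closed graphs retain the star-switches (so their validities do not exceed $\theoryf{S5}$), statement (5) reduces to showing that an existentially closed $G$ (respectively, such a $G$ that is also large enough, for (4)) validates $\theoryf{S5}$; having $\theoryf{S4.2}$ from (1), this comes down to a \emph{re-achievability lemma} together with its necessitation: for every $N\supseteq G$, every $\psi$ with parameters $\bar a$ from $G$ with $N\satisfies\psi[\bar a]$, and every $N'\supseteq N$, there is $N''\supseteq N'$ with $N''\satisfies\psi[\bar a]$. The idea is that the existential demands of $\psi$ over $\bar a$ that are satisfiable at all are already met inside $G$ and therefore persist into $N''$; the universal demands that $N$ happens to meet can be reinstated over $N'$ by adjoining new vertices of large or complete neighbourhood and joining up components (new vertices may carry any adjacency pattern, even though old edges are frozen); and a cardinality requirement of $\psi$ met by $N$ that a later extension could irrevocably overshoot cannot arise, because $G$ --- hence $N$ --- already surpasses every expressible cardinal (in the $\mathcal{L}_\sim$-only case of (5) no infinite cardinality is expressible, so this issue is vacuous). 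Both cases of the dichotomy are witnessed by (3) and (4).

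The step I expect to be the real obstacle is exactly this re-achievability lemma: turning the informal split of $\psi$ into ``existential'', ``universal'' and ``cardinality'' content into a genuinely uniform argument valid for arbitrary $\psi\in\mathcal{L}_\sim^{\possible}$, where quantifiers are freely interleaved with $\possible$ and $\necessary$ and no such split is available syntactically. I expect the honest proof to proceed either by a back-and-forth argument in which existential closure of $G$ supplies all the matching moves over the parameters, or by showing that from such a $G$ the potentialist system of its extensions is bisimilar (over $\bar a$) to one in which $\theoryf{S5}$ is evident; and care will be needed to pin down exactly how existentially closed, and how large, $G$ must be for the argument to go through.
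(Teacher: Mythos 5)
Your statements (1)--(3) essentially track the paper's proof: the $\theoryf{S4.2}$ lower bound comes from amalgamation via theorem \ref{Theorem.Validities-lower-bounds}, and the upper bounds from control statements and theorem \ref{Theorem.Control-statement-upper-bounds} --- though the paper uses dials counting isolated points (``exactly $n$ isolated points'' for $n<N$ together with ``at least $N$'') rather than your star-component switches, and buttons ``there is a $k$-cycle'' at a graph omitting infinitely many cycle lengths. One repairable bug: attaching a single new pendant vertex to the centre of the unique $i$-leaf star component produces an $(i+1)$-leaf star component, so your move for turning $\psi_i$ off can turn $\psi_{i+1}$ on, breaking independence; and the obvious repairs (joining the new vertex to both centre and a leaf) create triangles that push your odd-cycle buttons. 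Attaching the new vertex to a leaf instead fixes both problems, but as written the independence claims fail.

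The genuine gap is in statements (4) and (5), and you have located it yourself: everything rests on the ``re-achievability lemma,'' which you do not prove and which you describe as the real obstacle. The paper closes this in two different ways, neither of them the back-and-forth or bisimulation you anticipate. For (5), the step ``existentially closed $\Rightarrow$ maximality principle for $\possible\mathcal{L}_\sim$ with parameters'' is theorem \ref{Theorem.MP-for-graphs} (more generally theorem \ref{Theorem.MP-equivalents}), proved by compactness: given $G\of H\satisfies\necessary\varphi(\bar a)$, the theory $\Delta(G)\union\Delta_0(H)$ is finitely consistent because existential closure lets $G$ realize any finite piece of the atomic diagram of $H$ over $\bar a$; a model $N$ of it has $G\elesub N$ and $H\of N$, so $N\satisfies\varphi(\bar a)$, and the key lemma pulls this back to $G$. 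No uniform syntactic split of $\varphi$ into existential, universal and cardinality content is needed. For (4) the paper does not characterize the $\theoryf{S5}$ graphs for the full language at all; it simply builds one by the transfinite button-pushing iteration of theorem \ref{Theorem.Unions-of-chains-S5}, using that the class of graphs is closed under unions of chains and catching its tail with an $\omega$-iteration of iterations. Your proposed characterization for (4) --- existentially closed and $|G|$ beyond every expressible cardinal --- is moreover not obviously sufficient: with parameters, ``vertex $a$ has degree at least $\lambda$'' is an unpushed button whenever it fails and $\lambda$ is expressible, so it is the cardinalities of parameter-definable neighbourhoods, not just $|G|$, that must be controlled, and existential closure governs only finite configurations. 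This is exactly the kind of interleaving your informal three-way split cannot handle, which is presumably why the paper avoids the characterization route for the full modal language.
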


\begin{proof}
For the \theoryf{S4.2} lower bound, it suffices by theorem \ref{Theorem.Validities-lower-bounds} to observe that the class of all graphs exhibits the amalgamation property. And indeed it does, since if graph $G$ embeds into graphs $H_0$ and $H_1$, then we may take copies of $H_0$ and $H_1$ extending $G$ and for which the sets of respective new vertices are disjoint. We may then form a graph $H$ with embedded copies of $H_0$ and $H_1$, which therefore amalgamate over $G$ as desired.

For the \theoryf{S5} upper bound, it suffices to show that every graph admits arbitrarily large finite dials. Let $d_n$ be the graph-theoretic assertion that there are exactly $n$ isolated points, and let $d_{\geq N}$ assert that there are at least $N$ isolated points. These are assertable by sentences in the first-order language of graph theory. For any finite number $N$, we may consider the statements $d_n$ for $n<N$ and $d_{\geq N}$, which together form a dial, since every graph either has some specific number of isolated points $n<N$, or else it has at least $N$ isolated points; and every graph can be extended to a graph making any one of these dial statements true.

To show that the lower bound of \theoryf{S4.2} is realized, it suffices to find a graph admitting arbitrarily large finite families of independent buttons and dials. We can use the dials $d_n$ mentioned just previously, and for buttons, let $b_k$ (for $k\geq 3$) assert that there is a cycle of length $k$ in the graph. This is a button, since every graph can be extended to a graph with a $k$-cycle, and once there is a $k$-cycle, then it remains in all larger extension graphs. Furthermore, these buttons and dials are independent, since we may add any number of isolated points without changing the existence of $k$-cycles, and we may add any $k$-cycle without changing the number of isolated points. If a graph lacks $k$-cycles for infinitely many $k$, then these buttons will be unpushed, and consequently the validities of the graph will be contained within \theoryf{S4.2} and hence identical to \theoryf{S4.2}, as claimed for statement (3).

Statement (4) will follow from theorem \ref{Theorem.Unions-of-chains-S5} in the next section from the observation that the class of graphs is closed under unions of chains. Theorems \ref{Theorem.MP-for-graphs} and \ref{Theorem.MP-for-graphs-no-parameters} identify exactly the graphs that validate \theoryf{S5}.

Statement (5) will similarly be proved in the next section with corollary \ref{Corollary.Graph-validities-S4.2-or-S5}.
\end{proof}

\section{Maximality principle}

A model $M$ in a potentialist system satisfies the \emph{maximality principle} with respect to a language when it satisfies all instances of the \axiomf{S5} axiom $\possible\necessary\varphi\to \varphi$ for $\varphi$ in that language. Our goal in this section is to determine necessary and sufficient conditions for a model $M$ to satisfy the maximality principle with respect to various languages.

Let us begin by establishing in very general circumstances that there will be some models validating \theoryf{S5}.\pagebreak

\begin{theorem}\label{Theorem.Unions-of-chains-S5}%
Suppose that $\mathcal{W}$ is a potentialist system.
\begin{enumerate}
  \item If every countable chain of extensions in $\mathcal{W}$ has an upper bound in $\mathcal{W}$, then every model can be extended to one that validates $\theoryf{S5}$ with respect to any fixed countable family of substitution instances.
  \item If every (set-sized) chain of extensions in $\mathcal{W}$ has an upper bound in $\mathcal{W}$, then every model can be extended to one that validates $\theoryf{S5}$ with respect to sentences in any fixed set-sized language.
  \item If every (set-sized) chain of extensions in $\mathcal{W}$ has its union in $\mathcal{W}$, then every model can be extended to one that validates \theoryf{S5} with respect to any fixed set-sized language, allowing parameters.
\end{enumerate}
\end{theorem}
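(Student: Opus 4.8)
The plan is a standard ``closing-off'' construction in which one repeatedly passes to extensions that \emph{push buttons}, that is, make true statements that were merely possibly necessary. Since the accessibility relation of any potentialist system is reflexive and transitive, \theoryf{S4} is valid at every world for every language of substitution instances, so it will suffice to arrange in addition that the target model $\bar M$ satisfies the maximality principle $\possible\necessary\varphi\to\varphi$ for every $\varphi$ in the Boolean--modal closure of the relevant family of instances, and that it does so \emph{necessarily}: over \theoryf{S4} the scheme $\possible\necessary\varphi\to\varphi$, applied also to $\necessary\psi$ in place of $\varphi$, yields the scheme $\possible\necessary\varphi\to\necessary\varphi$, and this axiomatizes \theoryf{S5} over \theoryf{S4}; so once these instances hold necessarily at $\bar M$, every substitution instance of an \theoryf{S5}-theorem holds at $\bar M$ as well. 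Two monotonicity facts drive the argument: (i) $\necessary\varphi$ is inherited by every direct extension, hence by the upper bound or the union of any chain; and (ii) if $M\sqof N$ and $N\satisfies\possible\necessary\varphi(a)$ for parameters $a$ already present in $M$, then $M\satisfies\possible\necessary\varphi(a)$, because a witnessing extension of $N$ is also one of $M$. Thus a statement that is a button at the top of a chain is already a button at every stage below it, so once it has been pushed anywhere below it remains pushed all the way up.

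For statement (1), enumerate the Boolean--modal closure of the given countable family as $\langle\varphi_n\rangle_{n\in\omega}$ and build a chain $M=M_0\of M_1\of\cdots$ of length $\omega$ in which, at stage $n$, if $M_n\satisfies\possible\necessary\varphi_n$ we take an extension $M_{n+1}\fo M_n$ with $M_{n+1}\satisfies\necessary\varphi_n$, and otherwise $M_{n+1}=M_n$. By hypothesis this countable chain has an upper bound $\bar M\in\mathcal W$. If $\varphi_n$ is possibly necessary at $\bar M$, then by (ii) it is possibly necessary already at $M_n$, so $M_{n+1}\satisfies\necessary\varphi_n$ and hence $\bar M\satisfies\necessary\varphi_n$ by (i); the same reasoning applies verbatim at every extension of $\bar M$, using (ii) again, so the maximality principle holds necessarily at $\bar M$ and \theoryf{S5} is valid there for the given family. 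Statement (2) is identical, with $\omega$ replaced by the cardinality of the Boolean--modal closure of the fixed set-sized language and with ``upper bound of a set-sized chain'' in place of ``upper bound of a countable chain''; a single pass through the enumeration suffices, with no need to revisit statements, precisely because of the ``button at the top is a button below'' observation.

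For statement (3) the target statements carry parameters, which appear only as the model grows, so there is no enumeration fixed in advance and one must iterate and close off. Fix a regular cardinal $\theta$ (chosen large enough; see below) and build $\langle M_\alpha:\alpha<\theta\rangle$ with $M_0=M$: at successor stages a bookkeeping function designates a pair $(\varphi,a)$ with $\varphi$ in the Boolean--modal closure of the language and $a$ a tuple occurring in some earlier $M_\beta$, and if that pair is currently a button we push it by passing to a witnessing extension, otherwise we leave the model unchanged; at limit stages we take the \emph{union}, which lies in $\mathcal W$ by hypothesis. Put $\bar M=\bigcup_{\alpha<\theta}M_\alpha\in\mathcal W$. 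Since $\theta$ is regular, every tuple of $\bar M$ already occurs in some $M_\beta$ with $\beta<\theta$; arranging the bookkeeping so that every such pair is designated at some stage below $\theta$ after its parameters have appeared, we conclude as before: if $(\varphi,a)$ is a button at $\bar M$, then by (ii) it is already a button when designated, so it is pushed there and by (i) stays necessary through $\bar M$. Hence $\bar M$ has no unpushed buttons, the maximality principle holds necessarily at $\bar M$ by the same propagation argument, and \theoryf{S5} is valid at $\bar M$ with parameters. It is essential here that limits are formed as unions rather than as arbitrary upper bounds, so that no parameter of $\bar M$ escapes the scheduling.

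The step I expect to be the main obstacle is the bookkeeping for (3): one must choose $\theta$ regular and large enough that the scheduling keeps pace with the parameters, and the number of pairs to be scheduled through stage $\alpha$ is controlled by $\Card{M_\alpha}$, which in turn depends on how large the witnessing extensions chosen along the way are. This causes no difficulty whenever those extensions can be taken of bounded size relative to $\Card{M_\alpha}$ and the language --- for instance in $\Mod(T)$ with substitution instances drawn from $\possible\mathcal{L}$ (parameters allowed), where one applies the downward \Lowenheim--Skolem theorem of theorem~\ref{Theorem.Lowenheim-Skolem} inside a witnessing extension to extract a submodel of size at most $\Card{M_\alpha}+\Card{\mathcal{L}}+\aleph_0$ over which the $\possible\mathcal{L}$-assertion $\necessary\varphi(a)$ still holds --- so that $\Card{M_\alpha}\le\Card{M}+\Card{\alpha}+\Card{\mathcal{L}}$ and any regular $\theta>\Card{M}+\Card{\mathcal{L}}$ works; in the fully general formulation one must track the per-step growth correspondingly when fixing $\theta$.
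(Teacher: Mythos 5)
Your overall architecture is the paper's: push buttons along a chain, pass to an upper bound or union, and observe that any statement still possibly necessary at the top was possibly necessary---hence pushed---at the stage it was scheduled; for statements (1) and (2) your argument is essentially identical to the paper's. Two points of comparison are worth recording. First, you are more careful than the paper about why pushing all buttons yields validity of the full modal theory \theoryf{S5} rather than merely the scheme $\possible\necessary\varphi\to\varphi$: your observation that the buttons-pushed scheme propagates automatically to every extension of $\bar M$ (because $\possible$ is downward persistent along $\sqof$), so that $\possible\necessary\varphi\to\necessary\varphi$ holds necessarily and a soundness argument over the cone above $\bar M$ applies, is a correct and genuine addition; the paper establishes only the maximality instances at the top model and leaves this step implicit. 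Second, for statement (3) you take a genuinely different route: a single iteration of length a pre-chosen regular $\theta$ with bookkeeping, whereas the paper runs an iteration of iterations---$\omega$ blocks, where block $n$ is a complete pass (by the method of statement (2)) through all pairs $(\varphi,\bar a)$ with $\bar a$ from $M_n$, followed by a countable union. The paper's block structure is precisely what dissolves the obstacle you flag: the length of block $n$ is determined by $\Card{M_n}$ at the moment the block begins, so no cardinal need be fixed in advance and no bound on the size of witnessing extensions is required.

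That obstacle is the one soft spot in your write-up. In a general potentialist system there is no a priori bound on the least size of a witnessing extension for a given button, so ``fix a regular $\theta$ large enough'' cannot be done before the construction is known, and your fallback via the \Lowenheim--Skolem theorem covers only $\possible\mathcal{L}$-instances in $\Mod(T)$, not the arbitrary set-sized languages the theorem allows (for $\mathcal{L}^{\possible}$ the \Lowenheim\ number can be enormous or fail to exist). This is repairable without abandoning your design---run the bookkeeping $\theta$-independently and take $\theta$ to be a closure point of the resulting definable growth function, which exists by replacement, or simply adopt the paper's $\omega$-block structure---but as written the fully general case of (3) is not closed off.
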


\begin{proof}
For statement (1), assume that the potentialist system has every countable chain bounded. Fix the assertions $\varphi_0$, $\varphi_1$,\ldots that will be used for substitution, and consider any model $M_0$. If this model thinks $\possible\necessary\varphi_0$, then there is an extension $M_0\of M_1$ with $M_1\satisfies\necessary\varphi_0$. Thus, we have pushed this button. Now, push each next button in turn: if $M_n\satisfies\possible\necessary\varphi_n$, then extend to $M_n\of M_{n+1}\satisfies\necessary\varphi_n$. Thus, we build a chain of models.
 $$M_0\of M_1\of M_2\of M_3\of \cdots $$
By assumption, there is a model $N$ extending every model $M_n$ in the chain. The key observation is that $N\satisfies\possible\necessary\varphi_n\to \varphi_n$, since if $\varphi_n$ is possibly necessary over $N$, then it was possibly necessary at stage $n$ and therefore pushed earlier, making $\varphi_n$ necessary in $N$, as desired.

For statement (2), it is a similar idea, but with a longer chain. Enumerate the sentences as $\varphi_\alpha$ for $\alpha<\delta$, and assume that every chain in $\mathcal{W}$ of size at most $\delta$ has an upper bound. Start in any model $M_0$, and then push button $\varphi_0$, if possible, by finding an extension $M_1\satisfies\necessary\varphi_0$. Continue to $M_1$, $M_2$, pushing each button in turn, if possible. At limit stages $\xi$, we use the chain hypothesis to find a model $M_\xi$ extending all earlier $M_\alpha$ for $\alpha<\xi$. In this way, we build a chain of models $M_\alpha$, for $\alpha<\delta$, such that $\varphi_\alpha$ is pushed in $M_{\alpha+1}$, if it is possibly necesssary over $M_\alpha$.
 $$M_0\of M_1\of M_2\of \cdots \of M_\alpha\of\cdots$$
Using the chain hypothesis a final time, there is a model $M_\delta$ extending the chain. Any statement $\varphi_\alpha$ that is possibly necessary over $M_\delta$ was possibly necessary over $M_\alpha$, and therefore made necessary in $M_{\alpha+1}$ and hence is still true in $M_\delta$. So $M_\delta$ validates all instances of the maximality principle for these sentences.

Statement (3) is a bit more subtle, since we are allowing parameters. Because the models are growing in size, we cannot seem to predict in advance how many buttons we will need to push. Nevertheless, we will be able to catch our tail using the continuity assumption and an iteration of iterations. Assume that the potentialist system is closed under unions of chains. Validating the \axiomf{S5} axiom $\possible\necessary p\to p$ amounts essentially to a closure operation, and so there will be a model validating every required instance. Start in any model $M_0$, and then using the method of statement (2), we may push all the buttons $\varphi(a)$ that arise over $M_0$ using parameters $a\in M_0$ for statements $\varphi$ in any set-sized language. Thus, we'll find a model $M_1$ satisfying all instances of the \axiomf{S5} axiom using parameters in $M_0$. We can similarly find a further extension $M_2$ satisfying the instances of \axiomf{S5} using parameters in $M_1$. Continuing in this way, consider the sequence of extensions
 $$M_0\of M_1\of M_2\of\cdots\of M_n\of\cdots$$
whose limit $M_\omega=\Union_n M_n$ is a model in $\mathcal{W}$ by our closure assumption, and it will satisfy all instances of \axiomf{S5} using any parameters in $M_\omega$, as desired.
\end{proof}

In the case of $\Mod(T)$ for a first-order theory $T$, this result plays out as follows.

\begin{corollary}\label{Corollary.S5-in-AEtheories}\
\begin{enumerate}
  \item For any first-order theory $T$, every model of $T$ can be extended to one in which \theoryf{S5} is valid in $\Mod(T)$ for all sentences in $\mathcal{L}^{\possible}$ or indeed any set-sized language.
  \item If the theory is $\forall\exists$ axiomatizable, then every model of it can be extended to one in which \theoryf{S5} is valid in $\Mod(T)$ for all assertions in $\mathcal{L}^{\possible}$ or indeed any set-sized language, with parameters.
\end{enumerate}
\end{corollary}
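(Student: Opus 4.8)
The plan is to obtain both statements as consequences of Theorem~\ref{Theorem.Unions-of-chains-S5}, so that the work reduces to verifying the appropriate chain-closure hypothesis for the potentialist system $\Mod(T)$ taken under the submodel relation~$\of$.

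For statement~(1), I would check that every set-sized chain $M_0\of M_1\of\cdots\of M_\alpha\of\cdots$ of models of $T$ has an upper bound in $\Mod(T)$; statement~(2) of Theorem~\ref{Theorem.Unions-of-chains-S5} then applies. To produce the upper bound, form the union structure $M=\Union_\alpha M_\alpha$ as an $\mathcal{L}$-structure---which need not itself satisfy $T$---and consider the first-order theory $T\union\Delta_0(M)$, where $\Delta_0(M)$ is the atomic diagram of $M$ using a fresh constant for each element. This theory is finitely satisfiable, since any finite fragment mentions only finitely many of the new constants, all of which already lie in a single $M_\alpha$, and that $M_\alpha$ is a model of $T$ realizing the atomic facts in question. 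By compactness there is a model $N\satisfies T\union\Delta_0(M)$, into which $M$---and hence every $M_\alpha$---embeds; relabeling via the renaming lemma, we may arrange that $N$ literally $\of$-extends every $M_\alpha$. Thus $\Mod(T)$ has upper bounds for all set-sized chains, and Theorem~\ref{Theorem.Unions-of-chains-S5}(2) provides the desired extension validating \theoryf{S5} for all sentences of any fixed set-sized language, in particular all sentences of $\mathcal{L}^{\possible}$.

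For statement~(2), the extra hypothesis that $T$ be $\forall\exists$ axiomatizable is exactly what makes $\Mod(T)$ closed under unions of chains: a $\Pi_2$ theory is preserved under unions of chains by the Chang--\Los--Suszko preservation theorem, of which only this (easy) direction is needed. Hence every set-sized $\of$-chain of models of $T$ has its union again in $\Mod(T)$, and statement~(3) of Theorem~\ref{Theorem.Unions-of-chains-S5} delivers an extension validating \theoryf{S5} for all assertions of any fixed set-sized language, now allowing parameters.

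I do not anticipate a genuine obstacle, since the content is entirely the reduction to Theorem~\ref{Theorem.Unions-of-chains-S5} together with two standard model-theoretic facts. The step needing the most care is the bookkeeping in the upper-bound construction for~(1)---ensuring that the compactness model $N$ can be converted into a literal common $\of$-extension of the whole chain rather than merely a target for separate embeddings of the $M_\alpha$---but this is precisely the sort of relabeling the renaming lemma handles, so it is routine.
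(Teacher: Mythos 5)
Your proposal is correct and follows essentially the same route as the paper: both reduce to Theorem~\ref{Theorem.Unions-of-chains-S5} by establishing upper bounds for chains via compactness on $T$ together with the atomic diagram(s) of the chain for statement~(1), and by invoking preservation of $\forall\exists$ theories under unions of chains for statement~(2). The bookkeeping step you flag (converting the compactness model into a literal $\of$-extension via the renaming lemma) is exactly how the paper handles it implicitly.
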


\begin{proof}
A simple compactness argument shows that every chain $\<M_\alpha\mid\alpha<\theta>$ of models of $T$ has an upper bound in $\Mod(T)$. Simply consider the theory consisting of $T$ together with the atomic diagrams $\Union_{\alpha<\theta}\Delta_0(M_\alpha)$. This theory is finitely consistent, and hence consistent, and any model of it provides an upper bound for the chain. Therefore, by theorem \ref{Theorem.Unions-of-chains-S5}, every model of $T$ can be extended to a model in which \theoryf{S5} for any set of sentences in any set-sized language.

For statement (2), if the theory is $\forall\exists$ axiomatizable, then unions of chains of models of $T$ will be models of $T$, placing us into the continuity case of theorem \ref{Theorem.Unions-of-chains-S5}, statement (3). In this case, therefore, we can find a limit model that will validate \theoryf{S5} for any fixed set-sized language, allowing parameters from the model.
\end{proof}

One cannot in general omit the requirement of the $\forall\exists$ axiomatization, in light of the following observation, which provides an $\exists\forall$ theory $T$ having no models that validate \theoryf{S5} for $\mathcal{L}$ with parameters.

\begin{observation}\label{Observation.Theory-without-S5-models}
There is an $\exists\forall$ axiomatizable first-order theory $T$ for which $\Mod(T)$ admits no models with the maximality principle with parameters.
\end{observation}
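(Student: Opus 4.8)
The plan is to let $T$ be the theory, in the language $\mathcal{L}_\sim$ of graph theory, axiomatized by the usual graph axioms (irreflexivity and symmetry of $\sim$) together with the sentence $\exists v\,\forall w\,(w\neq v\to \neg(w\sim v))$ asserting that there is an isolated vertex. The graph axioms are universal and this last axiom is $\exists\forall$, so after prenexing the conjunction is expressible by a single $\exists\forall$ sentence; thus $T$ is $\exists\forall$-axiomatizable, as required. I will then argue that, in the potentialist system $\Mod(T)$ under the submodel relation, no model satisfies the maximality principle with parameters.

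The heart of the matter is that in any model $M\satisfies T$, the formula $\varphi(x):=\exists w\,(x\sim w)$ is an unpushed button at the witnessing isolated vertex. Fix $M\satisfies T$ and let $p$ be an isolated vertex of $M$, so that $M\satisfies\neg\varphi(p)$. To see $M\satisfies\possible\necessary\varphi(p)$, let $N$ be obtained from $M$ by adjoining two new vertices $q$ and $r$ and the single new edge $p\sim q$. Then $M\of N$ as a submodel, since the added edge involves only the new vertex $q$; and $N\satisfies T$, since $r$ is an isolated vertex of $N$. Because $\varphi(p)$ is an existential assertion true in $N$, it persists to every submodel extension of $N$ inside $\Mod(T)$, so $N\satisfies\necessary\varphi(p)$, and therefore $M\satisfies\possible\necessary\varphi(p)$. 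Hence the instance $\possible\necessary\varphi(p)\to\varphi(p)$ of the maximality principle fails at $M$. Since $M$ was arbitrary, $\Mod(T)$ has no model satisfying the maximality principle with parameters.

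Finally, I would add a remark situating this against corollary \ref{Corollary.S5-in-AEtheories}: having an isolated vertex is not preserved under unions of chains---take $M_n$ to be the disjoint union of $n$ edges together with one extra isolated vertex, with $M_{n+1}$ obtained from $M_n$ by attaching a new vertex to that isolated vertex and adjoining a fresh one, so that $\Union_n M_n$ is an infinite perfect matching with no isolated vertex---and hence $T$ is not $\forall\exists$-axiomatizable, as indeed it must fail to be, lest corollary \ref{Corollary.S5-in-AEtheories} furnish exactly the models we have just shown cannot exist. I do not expect any serious obstacle here; the only points demanding care are the two small bookkeeping checks in the middle paragraph---that the extension $N$ genuinely lies in $\Mod(T)$ (which is why the spare isolated vertex $r$ is thrown in) and that the button $\varphi(p)$, once true in $N$, stays true in all further extensions, which is just the upward absoluteness of existential formulas. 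The only real ``creative'' step is locating a suitable $\exists\forall$ theory at all, the guiding idea being to make the sole existential witness---here the isolated vertex---something any extension can always use up, forcing a fresh witness and leaving the old one stranded as an unpushed button.
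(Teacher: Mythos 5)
Your proof is correct and takes essentially the same approach as the paper, which uses the complementary axiom ``there is a node adjacent to all other nodes'' and the unpushed button ``$a$ lacks an edge to some other node''; in both versions the point is exactly the one you identify, that the $\exists\forall$ witness can be used up by an extension that supplies a fresh witness to preserve $T$, stranding an unpushed button at the old one. Your bookkeeping (the spare isolated vertex $r$, upward persistence of existential formulas, and the remark that $T$ is not closed under unions of chains) is all sound and matches the paper's surrounding discussion.
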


\begin{proof}
In the language of graph theory, let $T$ assert that ``there is a node adjacent to all other nodes,'' and consider the resulting potentialist system $\Mod(T)$ of graphs having this property. No graph like this can satisfy the maximality principle with parameters, since it is possibly necessary that any given node $a$ lacks an edge to some other node, since we can extend the graph by adding a node to which $a$ is not adjacent, even while adding a new universally adjacent node in order to satisfy~$T$.
\end{proof}

A similar example would be the theory of dense linear orders with a least element---it is a button to make any particular element non-least. Both of these theories have $\exists\forall$ axiomatizations, and it seems that this argument will work with any truly $\exists\forall$ theory. Such theories are not closed under unions of chains, and this is how they escape the conclusions of statement (3) in theorem \ref{Theorem.Unions-of-chains-S5} and statement (2) of corollary \ref{Corollary.S5-in-AEtheories}.

We would now like to understand more precisely which models validate \theoryf{S5}, and to do so, let us begin with modal graph theory, which illustrates the central ideas in a concrete manner. The \emph{countable random graph} is (up to isomorphism) the unique countable saturated graph. This graph is characterized by the \emph{finite pattern property}, which asserts that for any two disjoint finite sets of vertices $A$ and $B$, there is a vertex $v$ adjacent to every vertex in $A$ and to no vertex in $B$. The finite pattern property allows you to find a new node satisfying exactly any desired pattern of connectivity to finitely many previous nodes, and by iterating this in a back-and-forth manner, one can easily show both that any two countable graphs with this property are isomorphic and also that any graph with this property is universal for all countable graphs.

\begin{theorem}\label{Theorem.MP-for-countable-graphs}
 A countable graph $G$ satisfies the maximality principle
  $$\possible\necessary\varphi(\bar a)\to\varphi(\bar a)$$
 for all $\varphi$ in the language of graph theory with parameters $\bar a\in G$ if and only if $G$ is the countable random graph.
\end{theorem}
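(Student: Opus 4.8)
The plan is to prove both implications directly, using the finite pattern property of the random graph as the bridge in each direction.

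\emph{From the random graph to the maximality principle.} Let $R$ denote the countable random graph and suppose $R\satisfies\possible\necessary\varphi(\bar a)$ for some $\varphi\in\mathcal{L}_\sim$ and parameters $\bar a\in R$. First I would fix a witnessing extension $R\of H$ with $H\satisfies\necessary\varphi(\bar a)$, and then extend $H$ further to a graph $R^{*}$ having the finite pattern property. Such an $R^{*}$ always exists --- one can build it by a routine back-and-forth that, for each pair of disjoint finite sets of vertices, adjoins a vertex realizing the corresponding adjacency pattern --- and since the finite pattern property axiomatizes $\Th(R)$, we have $R^{*}\satisfies\Th(R)$. Because $R^{*}$ is an extension of $H$ and $H\satisfies\necessary\varphi(\bar a)$, we get $R^{*}\satisfies\varphi(\bar a)$. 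Now I would invoke quantifier elimination for $\Th(R)$: there is a quantifier-free $\psi(\bar x)$ with $\Th(R)\proves\varphi\iff\psi$, so $R^{*}\satisfies\psi(\bar a)$; since $\psi$ is quantifier-free and $R$ is a substructure of $R^{*}$ containing $\bar a$, this reflects down to $R\satisfies\psi(\bar a)$, whence $R\satisfies\varphi(\bar a)$. So $R$ validates the maximality principle.

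\emph{From the maximality principle to the random graph.} Suppose the countable graph $G$ validates $\possible\necessary\varphi(\bar a)\to\varphi(\bar a)$ for all $\varphi$ with parameters. Since the countable graphs with the finite pattern property are precisely the copies of the countable random graph, as recalled above, it suffices to check that $G$ has the finite pattern property. Given disjoint finite sets $A$ and $B$ of vertices of $G$, let $\bar a$ enumerate $A\union B$ and let
$$\varphi(\bar a)\ :=\ \exists v\Bigl(\bigwedge_{c\in A\union B}v\neq c\ \wedge\ \bigwedge_{a\in A}v\sim a\ \wedge\ \bigwedge_{b\in B}v\not\sim b\Bigr).$$
I would extend $G$ to $G^{+}$ by adjoining one new vertex $v^{*}$ adjacent to exactly the vertices of $A$. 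Then $v^{*}$ witnesses $G^{+}\satisfies\varphi(\bar a)$, and moreover $G^{+}\satisfies\necessary\varphi(\bar a)$: in any graph extension of $G^{+}$ the vertex $v^{*}$ retains exactly its old adjacencies to the vertices of $A\union B$ --- no edge among existing vertices is ever deleted or created in a submodel extension --- so $v^{*}$ continues to witness $\varphi(\bar a)$. Hence $G\satisfies\possible\necessary\varphi(\bar a)$, and the maximality principle yields $G\satisfies\varphi(\bar a)$; the witnessing vertex is exactly the one demanded by the finite pattern property. Since $G$ is countable, $G$ is the countable random graph.

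Neither argument is hard, and the only substantive content lies in the first one: what is really used is that $\Th(R)$ is the model companion of the theory of graphs --- so every graph extends to a model of it --- together with its quantifier elimination, which is precisely what lets the truth of an \emph{arbitrary} first-order $\varphi$ in the large existentially closed model $R^{*}$ descend to $R$. Without quantifier elimination one could still handle primitive-existential $\varphi$, and it is worth noting that those instances alone suffice for the converse, since a primitive-existential statement over parameters is a button: the maximality principle restricted to such $\varphi$ already forces $G$ to be existentially closed, and hence, for countable $G$, to be the countable random graph.
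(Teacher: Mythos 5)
Your proof is correct, and the forward direction (maximality principle implies random graph) is essentially identical to the paper's: the instances of the finite pattern property are possibly necessary existential statements, hence true outright. The substantive difference is in the other direction. The paper goes \emph{downward}: given $G\of H$ with $H\satisfies\necessary\varphi(\bar a)$, it invokes its modal \Lowenheim--Skolem theorem (which rests on the key lemma that $\mathcal{L}$-elementarity equals $\possible\mathcal{L}$-elementarity) to replace $H$ by a countable extension, and then uses universality and homogeneity of the countable random graph to embed $H$ back into $G$ over $\bar a$, so that $G$ itself is an extension witnessing $\varphi(\bar a)$. You go \emph{upward}: extend $H$ to a model $R^{*}$ of the extension axioms, read off $R^{*}\satisfies\varphi(\bar a)$ from $H\satisfies\necessary\varphi(\bar a)$, and then pull the truth back down to $R$ via quantifier elimination for the theory of the random graph, since quantifier-free formulas are absolute to the substructure $R$. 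Both are sound; yours trades the paper's modal \Lowenheim--Skolem machinery and the embedding-over-parameters argument for the classical fact that $\Th(R)$ is the model companion of graph theory with quantifier elimination. The paper's route generalizes more readily to the later theorems (\ref{Theorem.MP-for-graphs}, \ref{Theorem.MP-equivalents}), where $\varphi$ ranges over $\possible\mathcal{L}_\sim$ and one compactness-amalgamates $\Delta(G)\union\Delta_0(H)$ rather than relying on quantifier elimination; your closing remark about existential closure and primitive-existential buttons correctly anticipates exactly that generalization. Two cosmetic points: your construction of $R^{*}$ is a one-directional closure under adding witnesses (a chain argument), not really a back-and-forth; and your explicit inclusion of the clauses $v\neq c$ is slightly more careful than the paper's formulation of the pattern property, which tacitly needs the witness to lie outside $B$.
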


\begin{proof}
($\to$) Suppose that $G$ satisfies the maximality principle assertion with parameters. Every instance of the finite pattern property has the form $$\exists x\ (\bigwedge_i x\sim a_i)\wedge(\bigwedge_j x\nsim b_j),$$
with finitely many parameters $a_i,b_j\in G$. Every such statement is possibly necessary, since we can extend the graph to add such a vertex, if necessary, and once there is one, it remains in all further extensions. So by the maximality principle, this statement must already be true in $G$. And so $G$ fulfills the finite pattern property, and since it is countable, it is therefore the countable random graph.

($\leftarrow$) Suppose that $G$ is the countable random graph. We aim to show that $G$ satisfies the maximality principle. To show this, suppose that $G\satisfies\possible\necessary\varphi(\bar a)$ for some $\bar a=(a_0,\ldots,a_n)$ from $G$. So there is an extension graph $G\of H$ such that $H\satisfies\necessary\varphi(\bar a)$. By theorem \ref{Theorem.Lowenheim-Skolem}, we may assume that $H$ is countable. Therefore $H$ embeds into $G$ by the universality of the countable random graph, and furthermore, we can find such an embedding that fixes the parameters in $\bar a$. So $G$ extends the image of $H$ under that embedding, and so $G\satisfies\varphi(\bar a)$, as desired.
\end{proof}

We can mount several refinements of this theorem. The argument actually shows that the countable random graph exhibits the maximality principle for assertions in the modal language $\possible\mathcal{L}_\sim$, not just $\mathcal{L}_\sim$. And conversely, having the maximality principle merely for existential assertions in the language of graph theory is sufficient to ensure the finite pattern property; so these all will be equivalent. In addition, we didn't use countability as such, but rather merely the finite pattern property (which together with countability characterizes the countable random graph), and so a more erudite version of the theorem is the following:

\begin{theorem}\label{Theorem.MP-for-graphs}
The following are equivalent for any graph $G$.
\begin{enumerate}
  \item The graph $G$ satisfies the maximality principle $\possible\necessary\varphi(\bar a)\to\varphi(\bar a)$ for every assertion $\varphi$ in the modal language of graph theory $\possible\mathcal{L}_\sim$, allowing parameters $\bar a=(a_0,\ldots,a_n)$ from $G$.
  \item The graph $G$ satisfies the maximality principle $\possible\necessary\varphi(\bar a)\to\varphi(\bar a)$ for every assertion $\varphi$ in the language of graph theory $\mathcal{L}_\sim$, allowing parameters $\bar a=(a_0,\ldots,a_n)$ from $G$.
  \item The graph $G$ satisfies the maximality principle $\possible\necessary\varphi(\bar a)\to\varphi(\bar a)$ for every existential assertion $\varphi=\exists x\,\varphi_0(x,\bar a)$ in the language of graph theory, with $\varphi_0$ quantifier free, allowing parameters $\bar a=(a_0,\ldots,a_n)$ from $G$.
  \item The graph $G$ fulfills the finite pattern property.
\end{enumerate}
\end{theorem}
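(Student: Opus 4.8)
The plan is to prove the four-way equivalence by a cycle of implications, routing through the finite pattern property as the central hub. The implications $(1)\to(2)\to(3)$ are immediate, since each successive statement asks for the maximality principle over a smaller class of assertions: $\mathcal{L}_\sim\of\possible\mathcal{L}_\sim$, and existential $\mathcal{L}_\sim$-assertions are a special case of arbitrary $\mathcal{L}_\sim$-assertions. So the real content lies in $(3)\to(4)$ and $(4)\to(1)$.

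For $(3)\to(4)$, I would argue exactly as in the forward direction of theorem \ref{Theorem.MP-for-countable-graphs}. Each instance of the finite pattern property — asking for a vertex $x$ with $x\sim a_i$ for all $i$ and $x\nsim b_j$ for all $j$, where the $a_i,b_j$ are finitely many parameters from $G$ — is an existential assertion $\exists x\,\varphi_0(x,\bar a)$ with $\varphi_0$ quantifier free. Such an assertion is possibly necessary in any graph: one extends $G$ by throwing in a new vertex realizing the desired adjacency pattern (if no such vertex is already present), and once such a vertex exists it persists into every further extension graph, so the statement is necessary there. Hence $G\satisfies\possible\necessary\varphi$, and by hypothesis (3), $G\satisfies\varphi$ already. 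As this holds for every disjoint pair of finite sets $A,B$ of vertices, $G$ has the finite pattern property.

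For $(4)\to(1)$, suppose $G$ has the finite pattern property and $G\satisfies\possible\necessary\varphi(\bar a)$ for some $\varphi\in\possible\mathcal{L}_\sim$ and parameters $\bar a$ from $G$; I must show $G\satisfies\varphi(\bar a)$. Fix an extension $G\of H$ with $H\satisfies\necessary\varphi(\bar a)$. Here is where countability must \emph{not} be assumed, so I cannot simply quote the $\Lowenheim$--Skolem reduction and universality of the countable random graph. Instead, the key point is that any graph with the finite pattern property is \emph{existentially closed} and, more to the point, is universal for all graphs of size at most $|G|$ that can be built over the same parameter set — but since $H$ may be larger than $G$, the cleanest route is a back-and-forth/embedding argument producing an embedding $j\colon H\to G'$ into some further extension $G\of G'$ fixing $\bar a$. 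Actually the sharpest formulation: first pass to a graph $H'$ with the finite pattern property extending $H$ and fixing $\bar a$ (add witnesses for all finite patterns over $H$, iterating $\omega$ many times), of the same cardinality as $\max(|H|,\aleph_0)$; then use that any two graphs with the finite pattern property of the same cardinality $\kappa$ are \emph{not} necessarily isomorphic unless $\kappa=\aleph_0$, so I must be more careful — what I really want is that $G$ embeds $H'$, or rather that $G$ has a \emph{common extension} with $H'$ into which $G$ embeds over $\bar a$. The correct move is: since $H\satisfies\necessary\varphi(\bar a)$, it suffices to exhibit \emph{any} extension $H\of K$ with an embedding $G\to K$ fixing $\bar a$; then $K\satisfies\varphi(\bar a)$ by necessity, and pulling back along the embedding and using the renaming lemma together with the key lemma (to handle that $\varphi\in\possible\mathcal{L}_\sim$), $G\satisfies\varphi(\bar a)$. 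Such a $K$ exists by amalgamating $G$ and $H$ over their common parameter subgraph generated by $\bar a$ — this is precisely the amalgamation property for graphs used in theorem \ref{Theorem.Graph-validities}. So the finite pattern property of $G$ is in fact \emph{not} what does the work in $(4)\to(1)$; rather, I suspect the intended argument \emph{does} reduce to countable $H$ via theorem \ref{Theorem.Lowenheim-Skolem} and the fact that a graph with the finite pattern property is universal for \emph{countable} graphs, which is exactly the back-and-forth argument sketched before the theorem — and for the modal language $\possible\mathcal{L}_\sim$ one invokes the key lemma to see that $\possible\mathcal{L}_\sim$-truth transfers across the relevant elementary steps.

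The main obstacle, then, is the $(4)\to(1)$ direction, specifically the interplay between the possibly-uncountable extension $H$ and the language $\possible\mathcal{L}_\sim$: one must either (a) reduce to countable $H$ using that $\necessary\varphi(\bar a)$ is preserved downward to a $\possible\mathcal{L}_\sim$-elementary — hence by the key lemma $\mathcal{L}_\sim$-elementary — countable substructure of $H$ containing $\bar a$, then embed that countable graph into $G$ by universality fixing $\bar a$, and conclude by the renaming lemma; or (b) argue directly by amalgamation. I expect route (a) is what the authors intend, mirroring the proof of theorem \ref{Theorem.MP-for-countable-graphs} but with the $\possible\mathcal{L}_\sim$/key-lemma upgrade; the delicate check is that the downward $\Lowenheim$--Skolem step preserves enough — but theorem \ref{Theorem.Lowenheim-Skolem} gives exactly $\possible\mathcal{L}_\sim$-elementary substructures of all infinite cardinalities below $|H|$, so a countable such substructure $H_0\elesub_{\possible\mathcal{L}_\sim} H$ containing $\bar a$ satisfies $\necessary\varphi(\bar a)$ as well (necessity is downward absolute along the accessibility relation, and $H_0\of H$), and then $H_0$ embeds into $G$ over $\bar a$ by universality of graphs with the finite pattern property for countable graphs, yielding $G\satisfies\varphi(\bar a)$.
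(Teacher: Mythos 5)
Your final argument is correct, but it routes the key implication $(4\to 1)$ differently from the paper. The paper goes \emph{up}: it forms the theory $\Delta(G)\union\Delta_0(H)$, observes that this is finitely consistent because the finite pattern property lets $G$ realize any finite fragment of the atomic diagram of $H$ over the parameters $\bar a$, and so obtains a model $N$ with $G\elesub_{\mathcal{L}}N$ and $H\of N$; then $N\satisfies\varphi(\bar a)$ by the necessity of $\varphi(\bar a)$ over $H$, and the key lemma pulls this back to $G$. Your route (a) goes \emph{down}: Löwenheim--Skolem to a countable $H_0\elesub_{\possible\mathcal{L}_\sim}H$ containing $\bar a$, then an embedding of $H_0$ into $G$ over $\bar a$ using the universality of finite-pattern-property graphs for countable graphs, concluding by the renaming lemma since $G$ extends the image of $H_0$. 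Both work; the paper's compactness argument avoids cardinality considerations entirely and generalizes verbatim to arbitrary theories (it is the same argument as theorem \ref{Theorem.MP-equivalents}, with the finite pattern property standing in for existential closure), whereas yours stays closer in spirit to theorem \ref{Theorem.MP-for-countable-graphs}.

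Two corrections to your exposition. First, the parenthetical justification ``necessity is downward absolute along the accessibility relation'' is false: if $H_0\of H$, then $H_0$ has \emph{more} extensions than $H$, so $\necessary$ persists upward along $\of$, not downward (a two-vertex subgraph of a five-vertex graph does not inherit $\necessary$\,``there are at least five vertices''). The step is nevertheless correct for the reason you also supply: $\necessary\varphi(\bar a)$ is itself a $\possible\mathcal{L}_\sim$-assertion with parameters in $H_0$, so it transfers to $H_0$ by the $\possible\mathcal{L}_\sim$-elementarity furnished by theorem \ref{Theorem.Lowenheim-Skolem}. Second, your route (b) via amalgamation over the subgraph on $\bar a$ is genuinely broken, not merely a less natural alternative: it makes no use of the finite pattern property and would therefore prove the maximality principle with parameters for \emph{every} graph, contradicting corollary \ref{Corollary.Graph-validities-S4.2-or-S5}. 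The failure point is the ``pulling back'' step---the embedding of $G$ into the amalgam $K$ is not $\mathcal{L}$-elementary, so the key lemma does not apply and $K\satisfies\varphi(\bar a)$ does not descend to $G$. The finite pattern property is precisely what upgrades the situation to an elementary one, and this is what both your route (a) and the paper's compactness argument exploit.
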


\begin{proof}
($1\to 2\to 3$) These implications are immediate.

($3\to 4$) Every instance of the finite pattern property is a possibly necessary existential statement, which must therefore already be true in the graph.

($4\to 1$) This is the main part of the argument; we argue as in theorem \ref{Theorem.MP-for-countable-graphs}. Suppose that $G$ fulfills the finite pattern property and satisfies $\possible\necessary\varphi(\bar a)$ for some $\varphi\in\possible\mathcal{L}_\sim$, with parameters $\bar a$ from $G$. So there is a graph extension $G\of H\satisfies\necessary\varphi(\bar a)$. The theory $\Delta(G)\union\Delta_0(H)$ is consistent, since any finite part of $\Delta_0(H)$ can be realized inside $G$ over the parameters $\bar a$ by successive uses of the finite pattern property. Any model $N$ of this theory will have $G\elesub N$ and $H\of N$. Because of the latter, we will have $N\satisfies\varphi(\bar a)$, and therefore by the key lemma, we will deduce $G\satisfies\varphi(\bar a)$, since $\varphi$ is in $\possible\mathcal{L}_\sim$.
\end{proof}

Theorem \ref{Theorem.MP-for-graphs} shows that a model can validate \theoryf{S5} for assertions in $\possible\mathcal{L}$ with parameters, while this fails in an extension (using parameters of the extension). For example, the countable random graph certainly fulfills the finite pattern property, but if we were to add a new isolated vertex, we would of course no longer fulfill the finite pattern property.

\begin{corollary}\label{Corollary.Graph-validities-S4.2-or-S5}
For any graph $G$, the validities of $G$ with respect to the language of graph theory with parameters is either exactly \theoryf{S4.2} or exactly \theoryf{S5}.
\end{corollary}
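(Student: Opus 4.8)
Write $\Lambda_G$ for the set of propositional modal assertions valid at the graph $G$ with respect to substitution instances drawn from the first-order language of graph theory with parameters from $G$. By theorem~\ref{Theorem.Graph-validities} we already know $\theoryf{S4.2}\of\Lambda_G\of\theoryf{S5}$ for every $G$, so the corollary is precisely the assertion that $\Lambda_G$ never lies strictly in between. The plan is to split on the finite pattern property and show that $\Lambda_G=\theoryf{S5}$ exactly when $G$ has it, and $\Lambda_G=\theoryf{S4.2}$ otherwise.

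Suppose first that $G$ fulfills the finite pattern property. Then theorem~\ref{Theorem.MP-for-graphs} gives that $G$ satisfies the maximality principle $\possible\necessary\varphi(\bar a)\to\varphi(\bar a)$ for every $\varphi\in\possible\mathcal{L}_\sim$ with parameters from $G$; equivalently, every such substitution instance of the \axiomf{S5} axiom $\possible\necessary p\to p$ holds at $G$. Since $\theoryf{S4.2}$ is already valid at $G$ by theorem~\ref{Theorem.Graph-validities} and $\theoryf{S5}$ is obtained by adjoining the maximality axiom to $\theoryf{S4.2}$, a routine piece of modal reasoning over the modally closed language $\possible\mathcal{L}_\sim$ --- using that validity there is closed under the Boolean and modal operations involved --- yields that all of $\theoryf{S5}$ is valid at $G$; restricting to first-order substitution instances with parameters then gives $\Lambda_G=\theoryf{S5}$.

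Now suppose $G$ fails the finite pattern property, so that there are disjoint finite vertex sets $A,B\of V(G)$ with no vertex of $G$ adjacent to every element of $A$ and to no element of $B$. The plan is to exhibit, for each $N$, an independent family of $N$ unpushed buttons together with an independent family of switches (or dials), and then to invoke theorem~\ref{Theorem.Control-statement-upper-bounds}(2) to conclude $\Lambda_G\of\theoryf{S4.2}$, hence $\Lambda_G=\theoryf{S4.2}$ by the lower bound. The buttons come from unrealized patterns: for distinct vertices $c_1,\dots,c_N$ outside $A\cup B$, let $\beta_i$ be the existential sentence ``there is a vertex adjacent to every element of $A$ and to no element of $B\cup\{c_i\}$''. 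Each $\beta_i$ is a button, being possibly necessary (adjoin such a vertex, and the assertion, being existential, persists) and unpushed (a witness for $\beta_i$ would witness the unrealized pattern $(A,B)$). When $A\neq\emptyset$ these $\beta_i$ are easily made independent of one another and of the isolated-vertex dials $d_n$ from the proof of theorem~\ref{Theorem.Graph-validities}: a witness for $\beta_i$ may be adjoined adjacent also to each $c_j$ with $j\neq i$ (so no other $\beta_j$ is pushed) and adjacent to a member of $A$ (so it is not isolated, and the dials are undisturbed); and in the degenerate situation that $G$ has no vertices outside $A\cup B$, which forces $G$ finite, one instead uses the cycle-existence buttons $b_k$ for $k>|V(G)|$, all of which are unpushed and plainly independent of the dials. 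The genuinely delicate sub-case is $A=\emptyset$: then $G$ realizes every pattern with nonempty positive part, so $G$ is infinite and ``generic from above'', and now adjoining fresh isolated vertices or disjoint cycles would spuriously witness the $\beta_i$; one therefore enlarges $B$ so that $|B|\ge N$ (still unrealized, since larger negative sets are only harder to avoid) and replaces the dials by switches localized at $B$ --- for instance ``the number of pendant vertices at $b_j$ is odd'' for $b_j\in B$ --- arranging that every vertex introduced in operating a control statement stays adjacent to some member of $B$ and so never witnesses any $\beta_i$.

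I expect the independence bookkeeping of the second case to be the main obstacle: the control statements must be chosen so that operating any one of them --- pushing a button, flipping a switch, turning a dial --- introduces only vertices that cannot accidentally push another button, and the configuration $A=\emptyset$ is exactly where the natural choices interfere and the more careful, $B$-localized switches become necessary. A secondary point worth a remark is the modal step in the first case, namely that validity of $\theoryf{S4.2}$ together with all instances of $\possible\necessary p\to p$ yields validity of all of $\theoryf{S5}$; over the modally closed substitution language $\possible\mathcal{L}_\sim$ this follows by standard modal reasoning, but it is worth stating why the lack of closure under necessitation (noted after definition~\ref{Definition.Valid}) is not an issue here.
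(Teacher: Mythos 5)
Your overall strategy is the paper's: in the finite-pattern-property case obtain \theoryf{S5} from theorem \ref{Theorem.MP-for-graphs}, and otherwise manufacture arbitrarily large independent families of buttons and dials from an unrealized pattern $(A,B)$ and invoke theorem \ref{Theorem.Control-statement-upper-bounds}. The first case is fine (and your remark about necessitation is fair---the paper leans on theorem \ref{Theorem.Graph-validities} at the same level of detail), and your cycle-button fallback for the degenerate case $V(G)\of A\cup B$ is a small genuine improvement, since the paper silently presupposes that vertices outside $A\cup B$ exist. The difference that matters is the button design. The paper puts the distinguishing vertex on the \emph{positive} side of the pattern: its button $\beta_u$ asserts that there is a vertex adjacent to every $a_i$, to $u$, and to no $b_j$. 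A witness must then be adjacent to $u$, so it is never isolated, and an isolated point is never a witness; hence these buttons are independent of the isolated-point dials uniformly, with no case split on whether $A=\emptyset$. Your buttons put $c_i$ on the \emph{negative} side, which is precisely what creates the $A=\emptyset$ headache.

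Your repair of that sub-case has a genuine gap: the proposed switches ``the number of pendant vertices at $b_j$ is odd'' are not assertions of the first-order language of graph theory---the parity of a possibly infinite set of vertices is not first-order expressible---so they are not legitimate substitution instances and theorem \ref{Theorem.Control-statement-upper-bounds} does not apply to them. There is also a secondary interference problem even after repairing this: a witness used to push $\beta_i$ is adjacent to the vertices $c_j$ for $j\neq i$, and nothing prevents some $c_j$ from being a pendant at some $b_l$, in which case pushing $\beta_i$ disturbs that switch. Both problems evaporate if you adopt the paper's buttons---``there is a vertex adjacent to every element of $A$, to $u$, and to no element of $B$'' for vertices $u\notin A\cup B$---keeping the plain isolated-point dials $d_n$ throughout and compensating for accidentally de-isolated vertices by adjoining fresh isolated points, which, being adjacent to nothing, can never witness any $\beta_u$.
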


\begin{proof}
This corollary amounts to theorem \ref{Theorem.Graph-validities} statement (5). If $G$ fulfills the finite pattern property, then the validities of $G$ are exactly \theoryf{S5} by theorem \ref{Theorem.Graph-validities}. So suppose that $G$ does not fulfill the finite pattern property. So there are finitely many $a_0,\ldots,a_n$ and $b_0,\ldots,b_m$ for which there is no vertex $x$ connected to every $a_i$ and to no $b_j$. This is an unpushed button. For any other vertex $u$ not amongst the $a_i$s and $b_j$s, we may consider the statement asserting that there is a vertex adjacent to every $a_i$ and also to $u$, and to none of the $b_j$. This also is a button, which is unpushed in $G$, and these buttons are independent, since one can push any one of them without pushing any of the others. And furthermore, these buttons are independent of the dials asserting that there are exactly $k$ isolated points or at least $N$ isolated points, as in theorem \ref{Theorem.Graph-validities}. So we've found arbitrarily large independent families of buttons and dials, and so by theorem \ref{Theorem.Control-statement-upper-bounds} the validities of $G$ are contained within \theoryf{S4.2}. But since \theoryf{S4.2} is valid in any graph, the validities of $G$ are exactly \theoryf{S4.2}, as desired.
\end{proof}

Let us now also prove a parameter-free version of theorem \ref{Theorem.MP-for-graphs}.\goodbreak

\begin{theorem}\label{Theorem.MP-for-graphs-no-parameters}
The following are equivalent for any graph $G$.
\begin{enumerate}
  \item The graph $G$ satisfies the maximality principle $\possible\necessary\varphi\to\varphi$ for all sentences $\varphi$ in the modal language of graph theory $\possible\mathcal{L}_\sim$.
  \item The graph $G$ satisfies the maximality principle $\possible\necessary\varphi\to\varphi$ for all sentences $\varphi$ in the language of graph theory $\mathcal{L}_\sim$.
  \item The graph $G$ satisfies the maximality principle $\possible\necessary\varphi\to\varphi$ for all existential sentences $\varphi=\exists x_0,\ldots,x_n\ \varphi_0(x_0,\ldots,x_n)$, with $\varphi_0$ quantifier free in the language of graph theory $\mathcal{L}_\sim$.
  \item The graph $G$ is universal for finite graphs.
\end{enumerate}
\end{theorem}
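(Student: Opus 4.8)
The plan is to establish the cycle of implications $(1 \to 2 \to 3 \to 4 \to 1)$, following the template of the proof of theorem~\ref{Theorem.MP-for-graphs} but with universality for finite graphs playing the role that the finite pattern property played there. The implications $(1 \to 2)$ and $(2 \to 3)$ are immediate, since $\mathcal{L}_\sim$ is a sublanguage of $\possible\mathcal{L}_\sim$ and every existential sentence is in particular an $\mathcal{L}_\sim$-sentence.

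For $(3 \to 4)$, fix a finite graph $F$ on vertices $v_1,\dots,v_n$, and let $\sigma_F$ be the existential sentence asserting the existence of distinct $x_1,\dots,x_n$ realizing exactly the edge-and-non-edge pattern of $F$, so that $G\satisfies\sigma_F$ precisely when $G$ contains an induced copy of $F$. The sentence $\sigma_F$ is a button: any graph extends to one satisfying it by adjoining a fresh disjoint copy of $F$, and once an induced copy of $F$ is present it survives in every larger graph, since the submodel relation for graphs preserves induced subgraphs. Hence $G\satisfies\possible\necessary\sigma_F$, and the maximality principle for existential sentences yields $G\satisfies\sigma_F$. As $F$ was arbitrary, $G$ is universal for finite graphs.

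The substantive step is $(4 \to 1)$, and here I would argue as in theorem~\ref{Theorem.MP-for-graphs}. Suppose $G$ is universal for finite graphs and $G\satisfies\possible\necessary\varphi$ for some sentence $\varphi\in\possible\mathcal{L}_\sim$, so there is an extension $G\of H$ with $H\satisfies\necessary\varphi$. Form the theory $\Delta(G)\union\Delta_0(H)$, the elementary diagram of $G$ together with the atomic diagram of $H$, using disjoint sets of constants for the two structures. This theory is finitely consistent: a finite fragment mentions finitely many $\mathcal{L}_\sim$-sentences true in $G$ over finitely many named elements, together with the atomic and negated atomic facts describing some finite induced subgraph $F$ of $H$; since $G$ is universal for finite graphs, this $F$ occurs as an induced subgraph of $G$, and interpreting the $H$-constants accordingly inside $G$ itself satisfies the fragment. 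This is exactly where the parameter-free hypothesis is enough: because $\varphi$ has no parameters, the constants naming elements of $G$ do not constrain the copy of $F$, so only the abstract finite graph needs to embed---whereas the parametric theorem~\ref{Theorem.MP-for-graphs} must realize such patterns over fixed parameters, which is the finite pattern property.

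Now let $N$ be a model of $\Delta(G)\union\Delta_0(H)$. It carries an $\mathcal{L}_\sim$-elementary embedding of $G$ and a graph embedding of $H$, so $N$ is an embedded extension of $H$; since $H\satisfies\necessary\varphi$, theorem~\ref{Theorem.Direct-extension-possibility-same-as-embedded-extension} for necessity gives $N\satisfies\varphi$. Because $G$ embeds $\mathcal{L}_\sim$-elementarily into $N$ we have $G\eleequiv_{\mathcal{L}_\sim}N$, and therefore $G\eleequiv_{\possible\mathcal{L}_\sim}N$ by theorem~\ref{Theorem.L-theory-determines-possibleL-theory}, so $G\satisfies\varphi$, as $\varphi$ is a $\possible\mathcal{L}_\sim$-sentence. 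I expect the main obstacle to be precisely the finite-consistency verification in $(4\to1)$: one must see that, once parameters are dropped, universality for finite graphs---strictly weaker than the finite pattern property---is exactly the strength needed to glue $G$ and $H$ into a common model witnessing $G\satisfies\varphi$.
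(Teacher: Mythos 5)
Your proof is correct and follows essentially the same route as the paper's: the same button argument for $(3\to 4)$ and the same compactness gluing of $G$ with the atomic diagram of $H$ for $(4\to 1)$, concluding via theorem \ref{Theorem.L-theory-determines-possibleL-theory}. The only cosmetic difference is that you use the full elementary diagram $\Delta(G)$ with disjoint constants where the paper uses just $\Th(G)\union\Delta_0(H)$; since the final transfer of the sentence $\varphi$ only requires $G\eleequiv_{\mathcal{L}_\sim}N$, the weaker theory suffices, but your stronger version works equally well and your finite-consistency check is sound.
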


\begin{proof}
($1\to 2\to 3$) These implications are immediate.

($3\to 4$) The assertion that $G$ contains an isomorphic copy of a given finite graph is a possibly necessary existential statement, since we can add such a copy to any given graph, and so it must already be true in $G$, if $G$ satisfies the maximality principle for such assertions. So any such graph $G$ as in statement (3) is universal for finite graphs.

($4\to 1$) Suppose that $G$ is universal for finite graphs. We want to prove that it fulfills every instance of the maximality principle $\possible\necessary\varphi\to\varphi$ for sentences $\varphi$ in the modal language of graph theory. Assume $G\satisfies\possible\necessary\varphi$. This means that there is an extension $G\of H$ for which $H\satisfies\varphi$. Consider the theory $\Th(G)\union\Delta_0(H)$, taking the elementary theory of $G$ in the language of graph theory (sentences only, no parameters) together with the atomic diagram of $H$ (using names for every element of $H$). This theory is finitely consistent, since any finite part of $H$ can be realized inside $G$ by finite universality. So the theory is satisfied in some graph, which because of $\Th(G)$ will satisfy the full theory of $G$ and because of $\Delta_0(H)$ will contain a copy of $H$. So there is a model $N$ of this theory, which we may take as a direct extension $H\of N$. Since $N$ extends $H$, it follows that $N\satisfies\varphi$. And since $N$ satisfies the theory of $G$, it follows from theorem \ref{Theorem.L-theory-determines-possibleL-theory} that $G\satisfies\varphi$. So $G$ satisfies all instances of the sentential maximality principle, as desired.
\end{proof}

We should like now to generalize this analysis to arbitrary theories, not just graph theory. What is really going on? The model-theoretic feature in play here is \emph{existential closure}. A model $M$ is \emph{existentially closed} with respect to a theory $T$, if every existential statement $\exists x_0,\ldots,x_n\,\varphi(x_0,\ldots,x_n,\bar a)$, with $\varphi$ quantifier-free, that is true in some extension $M\of N\satisfies T+\exists \bar x\,\varphi(\bar x,\bar a)$, with parameters $\bar a$ from $M$, is true already in $M$.

\begin{theorem}\label{Theorem.MP-equivalents}
In the potentialist system $\Mod(T)$ for any theory $T$ in a first-order language $\mathcal{L}$, the following are equivalent:
  \begin{enumerate}
    \item $M\satisfies\possible\necessary\varphi(\bar a)\to\varphi(\bar a)$ for formulas $\varphi\in\possible\mathcal{L}$ with parameters $\bar a$ from $M$.
    \item $M\satisfies\possible\necessary\varphi(\bar a)\to\varphi(\bar a)$ for formulas $\varphi\in\mathcal{L}$ with parameters $\bar a$ from $M$.
    \item $M\satisfies\possible\necessary\varphi(\bar a)\to\varphi(\bar a)$ for existential formulas $\varphi=\exists\bar x\varphi_0(\bar x,\bar a)\in\mathcal{L}$, with $\varphi_0$ quantifier free and parameters $\bar a$ from $M$.
    \item $M$ is existentially closed in $\Mod(T)$.
  \end{enumerate}
\end{theorem}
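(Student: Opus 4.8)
The plan is to prove the cycle of implications $(1)\to(2)\to(3)\to(4)\to(1)$, which generalizes the argument of theorem~\ref{Theorem.MP-for-graphs}, with \emph{existential closure} now playing the structural role that the finite pattern property played for graphs. The implications $(1)\to(2)\to(3)$ are immediate, since each maximality principle is asserted for a subcollection of the formulas treated by the previous one.

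For $(3)\to(4)$, I would argue as follows. Suppose $M$ satisfies the maximality principle for existential $\mathcal{L}$-formulas, and suppose an existential assertion $\exists\bar x\,\varphi_0(\bar x,\bar a)$, with $\varphi_0$ quantifier free and $\bar a$ from $M$, is true in some extension $M\of N$ with $N\satisfies T$. The key point is that an existential assertion with quantifier-free matrix, once true, remains true in every further extension, because quantifier-free formulas are absolute between a structure and its submodels, so the witnesses persist. Hence $N\satisfies\necessary\exists\bar x\,\varphi_0(\bar x,\bar a)$, and therefore $M\satisfies\possible\necessary\exists\bar x\,\varphi_0(\bar x,\bar a)$. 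Applying the maximality principle gives $M\satisfies\exists\bar x\,\varphi_0(\bar x,\bar a)$, which is exactly the statement that $M$ is existentially closed in $\Mod(T)$.

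For $(4)\to(1)$, the substantive direction, suppose $M$ is existentially closed and $M\satisfies\possible\necessary\varphi(\bar a)$ for some $\varphi\in\possible\mathcal{L}$ with parameters $\bar a$ from $M$. Fix an extension $M\of H\satisfies T$ with $H\satisfies\necessary\varphi(\bar a)$. I would then consider the theory $\Delta(M)\union\Delta_0(H)$, consisting of the elementary diagram of $M$ together with the atomic diagram of $H$, arranged so that elements of $M$, which all lie in $H$, are named by the same constants in both. To see this theory is finitely consistent: a finite fragment comprises finitely many sentences of $\Delta(M)$, which $M$ itself satisfies under the natural reading of its constants, together with a finite conjunction of atomic and negated atomic facts of $\Delta_0(H)$, mentioning finitely many elements of $H$, some already in $M$ (call them $\bar c$) and some new (call them $\bar h$). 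Writing this conjunction as $\psi_0(\bar c,\bar h)$, we have $H\satisfies\exists\bar y\,\psi_0(\bar c,\bar y)$, witnessed by $\bar h$; this is an existential assertion with quantifier-free matrix and parameters from $M$, true in the extension $H$, hence true in $M$ by existential closure. Interpreting the new constants by such witnesses inside $M$ shows that $M$ models the finite fragment. By compactness there is a model $N\satisfies\Delta(M)\union\Delta_0(H)$; then $M\elesub N$, and $N$ contains a submodel isomorphic to $H$ with the parameters preserved, so $N\satisfies T$ and, being an extension of that copy of $H$, $N\satisfies\varphi(\bar a)$. Finally, since $M\elesub N$ and $\varphi\in\possible\mathcal{L}$, the key lemma gives $M\satisfies\varphi(\bar a)$, completing the cycle.

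The main obstacle I expect is the finite-consistency step in $(4)\to(1)$: one must carefully split a finite piece of $\Delta_0(H)$ into its ``old'' and ``new'' elements and recognize that the realizability of the new elements over the old is precisely an existential fact holding in $H$, which is exactly the kind of fact that existential closure transfers down into $M$. Everything else is routine bookkeeping with diagrams, together with the appeal to the key lemma to absorb the modal operators occurring in $\varphi$.
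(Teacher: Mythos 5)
Your proposal is correct and follows essentially the same route as the paper: the immediate implications $(1)\to(2)\to(3)$, the observation that instances of existential closure are possibly necessary existential statements for $(3)\to(4)$, and for $(4)\to(1)$ the consistency of $\Delta(M)\cup\Delta_0(H)$ via existential closure, yielding $N$ with $M\elesub N$ and $H\of N$, followed by an appeal to the key lemma. Your more explicit splitting of a finite fragment of $\Delta_0(H)$ into old and new constants is just a fuller writing-out of the paper's terser finite-consistency step.
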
\goodbreak

\begin{proof}
($1\to 2\to 3$) These implications are immediate.

($3\to 4$) Any instance of existential closure is a possibly necessary existential statement, since once an existential statement becomes true, it remains true in all further extensions.

($4\to 1$) Suppose that $M$ is existentially closed in $\Mod(T)$. We aim to prove the maximality principle in $M$ for assertions in the modal language $\possible\mathcal{L}$ with parameters. Suppose that $M\satisfies\possible\necessary\varphi(\bar a)$ for such an assertion $\varphi$. So there is an extension $M\of H\satisfies T$ with $H\satisfies\necessary\varphi(\bar a)$. Consider the theory $\Delta(M)\union\Delta_0(H)$, consisting of the full elementary diagram of $M$ together with the atomic diagram of $H$. This theory is finitely consistent, since any finite combination of atomic assertions true in $H$ must already be realizable in $M$, since it is existentially closed. So there is a model $N$ of this theory, which we may take to have $M\elesub N$ and $H\of N$. Since $H\of N$, we know that $N\satisfies\varphi(\bar a)$, and since $M\elesub N$ it follows by the key lemma that $M\satisfies\varphi(\bar a)$, as desired.
\end{proof}

This theorem has a consequence when \emph{every} model in $\Mod(T)$ validates \theoryf{S5}. 

\begin{corollary}\label{Corollary.S5-valid-at-every-model}
For any first-order theory $T$, the following are equivalent:
 \begin{enumerate}
  \item \theoryf{S5} is valid in $\Mod(T)$ at every model, with parameters, for assertions in~$\mathcal{L}^{\possible}$.
  \item \theoryf{S5} is valid in $\Mod(T)$ at every model, with parameters, for assertions in~$\possible\mathcal{L}$.
  \item \theoryf{S5} is valid in $\Mod(T)$ at every model, with parameters, for assertions in~$\mathcal{L}$.
  \item $T$ is model complete.
  \item The modally trivializing logic $p\iff\possible p$ (and so also $p\iff\necessary p$) is valid in $\Mod(T)$ at every model, with parameters, for assertions in $\mathcal{L}^{\possible}$.
 \end{enumerate}
\end{corollary}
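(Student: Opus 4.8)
The plan is to run the five conditions around a cycle $(1)\to(2)\to(3)\to(4)\to(5)\to(1)$, leaning on the two preparatory results already in hand: Theorem~\ref{Theorem.MP-equivalents}, which (applied at each model) identifies ``the maximality principle holds at every model for $\mathcal{L}$ with parameters'' with ``every model of $T$ is existentially closed in $\Mod(T)$'', and Theorem~\ref{Theorem.Modality-trivialization-equivalents}, which equates model completeness of $T$ with modality trivialization over $\mathcal{L}^{\possible}$. The steps $(1)\to(2)\to(3)$ are immediate, since $\mathcal{L}\subseteq\possible\mathcal{L}\subseteq\mathcal{L}^{\possible}$ and so a maximality-principle instance over a smaller language is a special case of one over a larger.

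For $(3)\to(4)$: condition~(3) says every $M\satisfies T$ obeys $\possible\necessary\varphi(\bar a)\to\varphi(\bar a)$ for $\varphi\in\mathcal{L}$ with parameters, which by Theorem~\ref{Theorem.MP-equivalents} means precisely that every model of $T$ is existentially closed in $\Mod(T)$. The remaining step is the classical fact (Robinson's test) that $T$ is model complete exactly when every model of $T$ is existentially closed. I would record this via the standard elementary-chain ``sandwich'': given models $M\of N$ of $T$, build interleaved elementary chains $M=M_0\elesub M_1\elesub\cdots$ and $N=N_0\elesub N_1\elesub\cdots$ with $M_i\of N_i\of M_{i+1}$, where each new inclusion is secured by noting that the appropriate atomic-diagram-plus-elementary-diagram theory is finitely consistent precisely because the model at the previous stage is existentially closed; then $\Union_i M_i=\Union_i N_i$ is an elementary extension of both $M$ and $N$, whence $M\elesub N$, so $T$ is model complete. (Alternatively one may simply cite this as standard, in the vicinity of \cite[theorem~8.3.1]{Hodges1993:ModelTheory}.)

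For $(4)\to(5)$: if $T$ is model complete, Theorem~\ref{Theorem.Modality-trivialization-equivalents} gives modality trivialization over $\mathcal{L}^{\possible}$, i.e.\ $\possible\varphi(\bar a)\iff\varphi(\bar a)$ in every model of $T$ for all $\varphi\in\mathcal{L}^{\possible}$ and parameters $\bar a$, and dually $\necessary\varphi(\bar a)\iff\varphi(\bar a)$ (using $\necessary\varphi\iff\neg\possible\neg\varphi$); this is exactly the validity of the schemas $p\iff\possible p$ and $p\iff\necessary p$ at every model for substitution instances in $\mathcal{L}^{\possible}$ with parameters, which is condition~(5). In fact $(4)\iff(5)$ is nothing but an unwinding of Theorem~\ref{Theorem.Modality-trivialization-equivalents}, since condition~(5) \emph{is} the assertion that $T$ admits modality trivialization over $\mathcal{L}^{\possible}$. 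For $(5)\to(1)$: granted the modally trivializing schema over $\mathcal{L}^{\possible}$, for any $\varphi\in\mathcal{L}^{\possible}$ the formula $\necessary\varphi$ again lies in $\mathcal{L}^{\possible}$, so $\possible\necessary\varphi(\bar a)\iff\necessary\varphi(\bar a)\iff\varphi(\bar a)$ in every model; in particular every instance of $\possible\necessary\varphi(\bar a)\to\varphi(\bar a)$ holds for $\varphi\in\mathcal{L}^{\possible}$, which is condition~(1).

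The only genuinely model-theoretic point, and hence the main thing to get right, is the implication ``every model of $T$ existentially closed $\implies$ $T$ model complete'' inside $(3)\to(4)$: existential closure delivers only $\exists_1$-elementarity of submodel inclusions, and promoting this to full elementarity needs the iterated elementary-chain construction above (or an appeal to the classical characterization). Everything else is language-inclusion bookkeeping together with the two cited theorems.
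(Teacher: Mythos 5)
Your proposal is correct and follows essentially the same route as the paper: the cycle $(1)\to(2)\to(3)\to(4)\to(5)\to(1)$, with $(3)\to(4)$ via Theorem~\ref{Theorem.MP-equivalents} plus the classical equivalence of ``all models existentially closed'' with model completeness (which the paper simply cites from \cite[theorem~8.3.1]{Hodges1993:ModelTheory}, where you also sketch Robinson's test), and $(4)\to(5)\to(1)$ via Theorem~\ref{Theorem.Modality-trivialization-equivalents} and the observation that trivialization yields $\possible\necessary\varphi\iff\varphi$. The only difference is that you supply the elementary-chain argument the paper outsources to Hodges; the substance is identical.
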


\begin{proof}
($1\to 2\to 3$) Immediate.

($3\to 4$) If \theoryf{S5} is valid in $\Mod(T)$ at every model, with parameters, for assertions in $\mathcal{L}$, then by theorem \ref{Theorem.MP-equivalents}, it follows that every model of $T$ is existentially closed. But \cite[theorem~8.3.1]{Hodges1993:ModelTheory} shows that the models of $T$ are all existentially closed if and only if the theory is model complete.

%
($4\to 5$) If the theory $T$ is model complete, then by theorem \ref{Theorem.Modality-trivialization-equivalents}, we have modality trivialization in the full modal language $\mathcal{L}^{\possible}$.

($5\to 1$) If modalities trivialize in $\mathcal{L}^{\possible}$, then $p\iff\possible\necessary p$ is valid at every world, for assertions with parameters. In particular, $\possible\necessary\varphi(a)\to \varphi(a)$ will hold at every world of $\Mod(T)$ for assertions $\varphi$ in $\mathcal{L}^{\possible}$, with parameters.
\end{proof}

\begin{corollary}
For any first-order theory $T$, the modal validities of $\Mod(T)$ for assertions in $\mathcal{L}$, $\possible\mathcal{L}$, or $\mathcal{L}^{\possible}$, with parameters, are never exactly \theoryf{S5}.
\end{corollary}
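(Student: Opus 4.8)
The plan is to reduce the whole statement to Corollary~\ref{Corollary.S5-valid-at-every-model}, which already isolates all the content. That corollary says that \theoryf{S5} is valid at \emph{every} model of $\Mod(T)$ (with parameters, for assertions in any of the three languages $\mathcal{L}$, $\possible\mathcal{L}$, $\mathcal{L}^{\possible}$) if and only if $T$ is model complete, and that in that case the strictly stronger \emph{modally trivializing} logic $p\iff\possible p$ is also valid at every model. The key elementary observation, which does all the separating work, is that $\possible p\to p$ is \emph{not} a theorem of \theoryf{S5}: it fails already on a two-point Kripke frame whose accessibility relation is the full equivalence relation, which is an \theoryf{S5}-frame. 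Hence once $\possible p\to p$ turns up among the validities, those validities cannot be \emph{exactly} \theoryf{S5}.

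Concretely, suppose toward a contradiction that, for one of the languages $\mathcal{L}$, $\possible\mathcal{L}$, or $\mathcal{L}^{\possible}$, the modal validities of $\Mod(T)$ with parameters were exactly \theoryf{S5}. Then every instance of every \theoryf{S5} axiom is valid at every model of $\Mod(T)$ with respect to substitution instances from that language with parameters; this is precisely hypothesis~(3), (2), or~(1) of Corollary~\ref{Corollary.S5-valid-at-every-model}, respectively. The corollary then yields its clause~(5): the logic $p\iff\possible p$ is valid in $\Mod(T)$ at every model, with parameters, for assertions in $\mathcal{L}^{\possible}$, and hence also for assertions in the smaller languages $\possible\mathcal{L}$ and $\mathcal{L}$. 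In particular $\possible p\to p$ is a modal validity of $\Mod(T)$ (with parameters) in whichever of the three languages we started with. Since $\possible p\to p\notin\theoryf{S5}$, the validities strictly contain \theoryf{S5}, contradicting the assumption that they equal \theoryf{S5}.

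I expect no genuine obstacle: the analytic work has all been front-loaded into Corollary~\ref{Corollary.S5-valid-at-every-model} and, behind it, Theorem~\ref{Theorem.MP-equivalents} and Theorem~\ref{Theorem.Modality-trivialization-equivalents}, and what remains is just the propositional-modal-logic fact that $\possible p\to p$ separates the modally trivializing logic from \theoryf{S5}. The only point worth a sentence is the degenerate case in which $T$ is inconsistent, so $\Mod(T)=\emptyset$; then every propositional modal assertion is vacuously valid at every (nonexistent) model, the set of validities is the inconsistent logic of all assertions, which is again not \theoryf{S5}, and indeed this case is already subsumed by the argument above, since all clauses of Corollary~\ref{Corollary.S5-valid-at-every-model} hold vacuously when there are no models. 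Thus the conclusion holds for every first-order theory $T$.
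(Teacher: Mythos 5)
Your proposal is correct and follows essentially the same route as the paper: reduce everything to Corollary~\ref{Corollary.S5-valid-at-every-model}, which shows that validity of \theoryf{S5} at every model forces validity of $p\iff\possible p$, and then observe that $\possible p\to p$ is not a theorem of \theoryf{S5}, so the validities properly exceed \theoryf{S5}. Your extra touches (the two-point frame witnessing $\possible p\to p\notin\theoryf{S5}$ and the vacuous case of inconsistent $T$) merely fill in details the paper leaves implicit.
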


\begin{proof}
Corollary \ref{Corollary.S5-valid-at-every-model} shows that if \theoryf{S5} is valid in $\Mod(T)$ at every world for any of those languages, with parameters, then $p\iff\possible p$ and $p\iff \necessary p$ are also valid. But these latter validities are not part of \theoryf{S5}, and so the modal validities are not \emph{exactly}~\theoryf{S5}.
\end{proof}\goodbreak

We can similarly establish a parameter-free version, defining that a model $M$ is \emph{sententially existentially closed} in $\Mod(T)$ if whenever $M\of N\satisfies T$ and $N$ satisfies an existential sentence $\exists x_0,\ldots,x_n\,\varphi_0(x_0,\ldots,x_n)$, with $\varphi_0$ quantifier free, then the sentence is already true in $M$. In graph theory, this is equivalent to being universal for finite graphs.

\begin{theorem}\label{Theorem.MP-equivalents-no-parameters}
In the potentialist system $\Mod(T)$ for any theory $T$ in a first-order language $\mathcal{L}$, the following are equivalent:
  \begin{enumerate}
    \item $M\satisfies\possible\necessary\varphi\to\varphi$ for sentences $\varphi\in\possible\mathcal{L}$.
    \item $M\satisfies\possible\necessary\varphi\to\varphi$ for sentences $\varphi\in\mathcal{L}$.
    \item $M\satisfies\possible\necessary\varphi\to\varphi$ for existential sentences $\varphi=\exists x_0,\ldots,x_n\,\varphi_0(x_0,\ldots,x_n)$ with $\varphi_0$ quantifier free in $\mathcal{L}$.
    \item $M$ is sententially existentially closed in $\Mod(T)$.
  \end{enumerate}
\end{theorem}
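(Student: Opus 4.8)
The plan is to follow the same pattern as in Theorem~\ref{Theorem.MP-equivalents} and its graph-theoretic special case Theorem~\ref{Theorem.MP-for-graphs-no-parameters}, proving the cycle of implications $(1\to 2\to 3\to 4\to 1)$. The implications $(1\to 2\to 3)$ are immediate, since the $\mathcal{L}$-sentences form a fragment of the $\possible\mathcal{L}$-sentences and the existential $\mathcal{L}$-sentences form a fragment of all $\mathcal{L}$-sentences, so each maximality principle is a special case of the previous one.

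For $(3\to 4)$, I would observe that every instance of sentential existential closure is a possibly necessary existential sentence. Indeed, suppose $M\of N\satisfies T$ and $N\satisfies\exists\bar x\,\varphi_0(\bar x)$ with $\varphi_0$ quantifier free. Since the truth of an existential sentence is preserved to all further extensions---the witnessing tuple persists and $\varphi_0$ is quantifier free, hence absolute upward---we get $N\satisfies\necessary\exists\bar x\,\varphi_0(\bar x)$ and therefore $M\satisfies\possible\necessary\exists\bar x\,\varphi_0(\bar x)$. By hypothesis~(3) this yields $M\satisfies\exists\bar x\,\varphi_0(\bar x)$, and as this holds for every such $N$, the model $M$ is sententially existentially closed.

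For $(4\to 1)$, which is the heart of the matter, I would argue as in the parameter-free graph case. Suppose $M$ is sententially existentially closed and $M\satisfies\possible\necessary\varphi$ for a sentence $\varphi\in\possible\mathcal{L}$. Fix an extension $M\of H\satisfies T$ with $H\satisfies\necessary\varphi$, and consider the theory $\Th(M)\union\Delta_0(H)$, consisting of the first-order $\mathcal{L}$-theory of $M$ (sentences only, no parameters) together with the atomic diagram of $H$ (using a constant for each element of $H$). The key step is that this theory is finitely consistent: a finite fragment amounts to a single $\mathcal{L}$-sentence $\theta$ true in $M$ together with a conjunction $\varphi_0(\bar c)$ of atomic and negated atomic facts about finitely many elements $\bar c$ of $H$; then $H\satisfies\exists\bar x\,\varphi_0(\bar x)$, and since $H$ is an extension of $M$ in $\Mod(T)$ and this is an existential sentence, sentential existential closure gives $M\satisfies\exists\bar x\,\varphi_0(\bar x)$, so one can interpret the constants $\bar c$ inside $M$ while $\theta$ remains true. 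By compactness the full theory has a model $N$, which we may arrange so that $H\of N$ as a direct extension and $M\eleequiv_{\mathcal{L}} N$. Since $H\of N$ and $H\satisfies\necessary\varphi$, we get $N\satisfies\varphi$; and since $M\eleequiv_{\mathcal{L}} N$, Theorem~\ref{Theorem.L-theory-determines-possibleL-theory} yields $M\eleequiv_{\possible\mathcal{L}} N$, so $M\satisfies\varphi$, as desired.

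The main obstacle, exactly as in Theorem~\ref{Theorem.MP-for-graphs-no-parameters}, is that the construction delivers only $M\eleequiv_{\mathcal{L}} N$ and not $M\elesub_{\mathcal{L}} N$, so the key lemma is not directly available; the argument must instead route through Theorem~\ref{Theorem.L-theory-determines-possibleL-theory}, which is precisely why the hypothesis is phrased in terms of \emph{sentential} existential closure and the conclusion is restricted to \emph{sentences} of $\possible\mathcal{L}$. I would take care to verify that this weaker input still suffices here, and that the finite-consistency step genuinely uses only existential sentences with no parameters---which it does, since $\Th(M)$ carries no constants for elements of $M$.
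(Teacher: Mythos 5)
Your proof is correct and follows essentially the same route as the paper: the same cycle of implications, the same observation for $(3\to 4)$, and for $(4\to 1)$ the same compactness argument on $\Th(M)\union\Delta_0(H)$ with finite consistency secured by sentential existential closure, concluding via theorem \ref{Theorem.L-theory-determines-possibleL-theory} since only $M\eleequiv_{\mathcal{L}} N$ (not elementarity) is available. Your explicit remark that $H\satisfies\necessary\varphi$ is what transfers $\varphi$ to $N$ is a welcome precision, but the argument is the paper's own.
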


\begin{proof}
($1\to 2\to 3$) These implications are immediate.

($3\to 4$) Any instance of sentential existential closure is a possibly necessary existential sentence, which by statement (3) must be true in $M$.

($4\to 1$) Suppose that $M$ is sententially existentially closed in $\Mod(T)$. To verify the sentential maximality principle, suppose that $M\satisfies\possible\necessary\varphi$ for some sentence $\varphi\in\possible\mathcal{L}$. So there is an extension $M\of H\satisfies T$ with $H\satisfies\varphi$. Consider the theory $\Th(M)\union\Delta_0(H)$ consisting of the theory of $M$ (no parameters) together with the atomic diagram of $H$, using names for every element of $H$. This theory is finitely consistent, since any finite part of the atomic diagram of $H$ is realized inside $M$ by sentential existential closure. So there is a model $N$ of the theory, which has $M\eleequiv N$ and $H\of N$. Since $H\of N$, we know that $N\satisfies\varphi$, and since $M\eleequiv N$, it follows by theorem \ref{Theorem.L-theory-determines-possibleL-theory} that $M\satisfies\varphi$, as desired.
\end{proof}

Note that if a first-order theory $T$ is complete, then corollary \ref{Corollary.Complete-theory} shows that the modally trivializing logic $\sigma\iff\necessary \sigma$ is valid in $\Mod(T)$ for \emph{sentences} $\sigma$ in $\possible\mathcal{L}$. In particular, the maximality principle $\possible\necessary\sigma\to\sigma$ will also be valid for such sentences.

\begin{observation}
The maximality principle for assertions in the intermediate modal language $\possible\mathcal{L}$ is not necessarily equivalent to the maximality principle in the full modal language $\mathcal{L}^{\possible}$. In the class of graphs, the countable random graph satisfies the maximality principle for $\possible\mathcal{L}_\sim$ with parameters, but it does not satisfy the maximality principle for $\mathcal{L}_\sim^{\possible}$, even merely for sentences.
\end{observation}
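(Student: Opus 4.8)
The plan is to derive both halves of the observation from results already established. For the positive half, recall that the countable random graph fulfills the finite pattern property, and so by theorem \ref{Theorem.MP-for-graphs} (the implication from clause (4) to clause (1)) it satisfies the maximality principle $\possible\necessary\varphi(\bar a)\to\varphi(\bar a)$ for every assertion $\varphi$ in the intermediate modal language $\possible\mathcal{L}_\sim$, allowing parameters. Nothing further is needed for this direction.

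For the negative half, I would exhibit a single $\mathcal{L}_\sim^{\possible}$-sentence witnessing the failure. By theorem \ref{Theorem.Countability-is-expressible} there is a sentence $\sigma\in\mathcal{L}_\sim^{\possible}$ with $H\satisfies\sigma$ exactly when $H$ is countable; let $\varphi$ be $\neg\sigma$, which thus expresses uncountability. The key elementary observation is that uncountability is \emph{necessary} once it holds: if $H$ is uncountable and $H\of N$, then $N$ has at least as many vertices as $H$ and so is uncountable too, whence $H\satisfies\necessary\varphi$ for every uncountable graph $H$. Since the countable random graph $G$ can be extended to an uncountable graph --- for instance by adjoining uncountably many fresh isolated vertices, which does not disturb $G$ as a submodel --- we obtain $G\satisfies\possible\necessary\varphi$. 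But $G$ is countable, so $G\satisfies\sigma$ and therefore $G\not\satisfies\varphi$. Hence the instance $\possible\necessary\varphi\to\varphi$ of the maximality principle fails at $G$, already for a sentence of $\mathcal{L}_\sim^{\possible}$, which is what we wanted.

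There is essentially no technical obstacle here: the argument is a direct combination of theorems \ref{Theorem.MP-for-graphs} and \ref{Theorem.Countability-is-expressible} with the trivial monotonicity of cardinality under the extension relation. The only point requiring any care is the direction of the implication --- one must notice that it is the \emph{negation} of the countability sentence, rather than the countability sentence itself, that yields a possibly-necessary assertion failing at $G$, and this is precisely what the monotonicity observation secures.
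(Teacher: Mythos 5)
Your proposal is correct and follows the paper's own argument essentially verbatim: the positive half cites theorem \ref{Theorem.MP-for-graphs} via the finite pattern property, and the negative half uses the negation of the countability sentence from theorem \ref{Theorem.Countability-is-expressible} as a possibly necessary sentence that fails in the countable random graph. The only difference is that you spell out the monotonicity of uncountability under extensions a bit more explicitly than the paper does.
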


\begin{proof}
We have already proved that the countable random graph satisfies the maximality principle in the language $\possible\mathcal{L}_\sim$ with parameters. But this does not extend to the full modal language $\mathcal{L}_\sim$ because the countable random graph is countable, whereas being uncountable is a possibly necessary statement of $\mathcal{L}_\sim^{\possible}$, since every graph can be extended to an uncountable graph, and once a graph is uncountable, this remains true in all further extensions.
\end{proof}

In light of the results of sections \ref{Section.Expressive-power-of-modal-graph-theory} and \ref{Section.Interpretative-power-of-modal-graph-theory}, any graph satisfying the maximality principle in the full modal language $\mathcal{L}^{\possible}_\sim$ must be enormous, because it is a button that the graph exceeds in size any of the cardinalities that we proved to be expressible in this language, such as the first $\beth$-hyper-fixed-point cardinal. Indeed, because of the uncertain status of question \ref{Question.Set-theory-interpreted-in-modal-graph-theory?}, it is not clear whether we can even define in \ZFC\ a truth predicate for modal graph-theoretic truth in the full language; and without such a truth predicate, it is not sensible to assert the maximality principle in that full modal language. For this reason, the maximality principle in the full modal language may be subject to the subtle metamathematical difficulties we discuss in section \ref{Section.Metamathematical-issues}.

\section{The interpretative power of modal graph theory}\label{Section.Interpretative-power-of-modal-graph-theory}

We should like now to demonstrate the strength of modal graph theory by proving that it can interpret various other mathematical theories and structures.

Let us begin with finite modal graph theory, where we consider the potentialist system consisting of all finite graphs. This is a proper class, but if desired one can reduce to a set by considering the finite graphs using a vertex set contained in the set of natural numbers $\N$. By the renaming lemma, any modal truth in the class of all finite graphs will be realized in these kinds of graphs, and so modal truth is not affected by this restriction.

\begin{theorem}\label{Theorem.Arithmetic-is-interpretable}
True arithmetic is interpretable in finite modal graph theory. There is a translation $\varphi\mapsto\varphi^*$ from arithmetic sentences $\varphi$ to sentences of modal graph theory $\varphi^*$, such that $\<\N,+,\cdot,0,1,<>\satisfies\varphi$ if and only if $\varphi^*$ holds in every finite graph in the class of finite graphs.
\end{theorem}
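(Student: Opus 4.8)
The plan is to interpret the standard model $\<\N,+,\cdot,0,1,<>$ directly inside finite modal graph theory, using the possibility operator to reach, from any finite graph, coded ``finite arithmetic approximations'' of unbounded size, and using nested modalities to simulate unbounded quantification over $\N$. First I would fix a uniform first-order coding of finite structures by finite graphs, in the style of the constructions in Theorem~\ref{Theorem.Finiteness-is-expressible} and Theorem~\ref{Theorem.Countability-is-expressible}: a formula $\mathrm{Approx}(u)$ of $\mathcal{L}_\sim$ asserting that the vertex $u$ codes a finite structure $A_u=\<D_u,\oplus_u,\otimes_u,\prec_u,z_u,o_u>$ satisfying the finitely many ``arithmetic approximation'' axioms --- $\prec_u$ is a discrete linear order with least element $z_u$ and greatest element $m_u$; $o_u$ is the successor of $z_u$; every non-maximal element has a successor; and $\oplus_u,\otimes_u$ obey the Peano recursion equations under a fixed overflow convention returning $m_u$ whenever the true value would exceed $m_u$. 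Alongside this I would define first-order predicates ``$w\in D_u$'', ``$v$ is an initial segment of $u$'' (written $v\preceq u$), and the coherence of the operations between $v$ and $u$ when $v\preceq u$. The number-theoretic content I would invoke is the elementary rigidity fact: for each $k$ there is, up to isomorphism, exactly one finite model of the approximation axioms of size $k+1$, namely the genuine $A_k=\<\{0,\dots,k\},+',\cdot',<,0,1>$ with truncation at $k$. Consequently every coded $\mathrm{Approx}$ occurring in a finite graph is a genuine initial segment of $\N$, any two of them occurring in a common finite extension agree on their overlap, and every natural number occurs in some $A_k$.

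Next I would define the translation $\varphi\mapsto\varphi^*$ by recursion on arithmetic formulas. To each arithmetic formula $\psi(v_1,\dots,v_\ell)$ I associate a modal graph formula $\psi^\#(u,w_1,\dots,w_\ell)$, intended to be evaluated when $u$ is a genuine coded approximation with $w_1,\dots,w_\ell\in D_u$ and with ``enough room'' --- a first-order condition $\mathrm{room}_\psi(u,\bar w)$ ensuring that every subterm of $\psi$ applied to $\bar w$ stays strictly below $m_u$. Atomic $\psi$ translates to its literal interpretation in $A_u$; Boolean connectives translate homomorphically; and quantifiers are handled modally, with $\exists v\,\theta$ becoming $\possible\exists u'\,\bigl(\mathrm{Approx}(u')\wedge u\preceq u'\wedge\exists w\in D_{u'}\,(\mathrm{room}_\theta(u',\bar w,w)\wedge\theta^\#(u',\bar w,w))\bigr)$ and $\forall v\,\theta$ becoming the dual with $\necessary\forall u'$ and the room clause entering as a hypothesis. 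Finally, for a sentence $\varphi$ I set $\varphi^*:=\possible\exists u\,\bigl(\mathrm{Approx}(u)\wedge\varphi^\#(u)\bigr)$. An induction on $\varphi$ should then establish the key biconditional: starting from any finite graph, for every genuine coded approximation $u$ with enough room, $\psi^\#(u,\bar w)$ holds if and only if $\<\N,\dots>\satisfies\psi(\bar n)$, where $\bar n$ are the numbers named by $\bar w$. The existential/$\possible$ step uses that from any finite graph one can pass to a finite extension containing $A_k$ for arbitrarily large $k$ (simply add a disjoint coded gadget), so any required witness eventually appears; the universal/$\necessary$ step uses that every further finite extension again contains only genuine, mutually coherent approximations. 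Hence $\varphi^*$ holds in a finite graph $G$ exactly when $\<\N,+,\cdot,0,1,<>\satisfies\varphi$; and since this depends in no way on $G$ --- the outer $\possible$ and all its modal subassertions are sentences, and the relevant approximations are reachable from every finite graph --- $\varphi^*$ holds in every finite graph if $\N\satisfies\varphi$, and in none otherwise.

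The step I expect to be the main obstacle is the inductive verification for nested quantifiers, and in particular the careful threading of the ``enough room'' guard together with the coherence of witnesses. One must check that an alternating chain of $\possible$- and $\necessary$-extensions can always be arranged inside a single finite approximation large enough to contain all previously chosen witnesses with enough room to evaluate the remaining subformula --- so that the modal quantifiers really range over one coherent copy of $\N$ rather than over incompatible fragments, and so that a $\necessary\forall$ is not trivialized by approximations too small to evaluate their matrix without overflow. This rests on the directedness of the finite graphs under extension (any two finite extensions of a finite graph embed into a common finite one) together with the rigidity and mutual coherence of the coded approximations established in the first step; modulo these points --- the analogues of the standard amalgamation arguments used throughout potentialism --- the remaining verifications (the finite-model coding, the rigidity lemma, and the base and Boolean cases of the induction) are routine.
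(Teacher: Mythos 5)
Your overall strategy matches the paper's: code finite arithmetic data into graphs in a way that is reachable by $\possible$ from any finite graph, and translate arithmetic quantifiers as $\possible\exists$/$\necessary\forall$ over coded objects, guarded by the assertion that the coding is still intact. The difference is that the paper codes each number separately (as a hub whose neighbors form an $(n+3)$-cycle) and defines $\leq$, $+$ and $\cdot$ modally, by the possible existence of injections and correspondences between neighbor sets, whereas you code whole truncated initial segments $A_k$ together with their operations as static graph structure and read atomic formulas off the coded structure directly, invoking a rigidity lemma for finite models of the truncation axioms. That substitution is where the gap lies.

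The rigidity lemma you state --- that every finite model of the approximation axioms is isomorphic to some genuine $A_k$ --- is a fact about abstract finite structures; it does not by itself give the property you actually need, namely that a particular node $u$ in a particular graph, once it codes $A_k$, continues to code the \emph{same} structure with the \emph{same} identification of its elements in every further extension in which $\mathrm{Approx}(u)$ still holds. A graph extension can add new neighbors to $u$ (enlarging $D_u$), and, more insidiously, can add new auxiliary gadget nodes that create new instances of the coded relations $\prec_u$, $\oplus_u$, $\otimes_u$ among the \emph{old} elements, or (if your gadget pattern for ``instance'' involves any negative conditions) destroy old instances. If that happens, $u$ may still satisfy $\mathrm{Approx}$ while the parameters $\bar w$ you carry across a modal operator now name different numbers, and the soundness of your $\possible\exists$ clause and both directions of your $\necessary\forall$ clause break. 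This is precisely what the paper's $+3$ offset is designed to prevent: ``the neighbors of $v$ form a nonempty connected $2$-regular graph'' cannot survive any enlargement of the neighbor set, so re-asserting ``$v$ still represents a number'' inside each modal operator forces the representation to be literally unchanged; and the paper sidesteps relation-stability entirely by defining $\leq$, $+$, $\cdot$ as \emph{possibly} witnessed correspondences rather than as static coded relations. To repair your argument you must either supply an analogous self-certifying coding for the whole structure $A_u$ (in effect the ``stable representation'' machinery the paper later develops for theorem \ref{Theorem.Hereditarily-countable-set-theory-is-interpretable}, here in finite form), or argue carefully that your approximation axioms are strong enough that any corruption of the coding is detected by the $\mathrm{Approx}$ guard --- a claim that is plausible but is not a consequence of the isomorphism-rigidity lemma as stated, and which depends delicately on the polarity of your gadget definitions.
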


\begin{proof}
The strategy will be to represent numbers within graph theory and then to show that the arithmetic operations and structure are definable in modal graph theory. To begin, we shall represent the numbers with graphs of the following form:
$$\begin{tikzpicture}
\foreach \n [count=\nn from 0] in {3,...,7} {
\begin{scope}[xshift=2*\n cm,scale=.6]
\pgfmathsetmacro\theta{360/\n}
\draw[rotate=-90] (0:1) \foreach \t [count = \i from 0] in {0,\theta,...,359} { -- (\t:1) node[dot] (P\i) {} } -- cycle;
\draw (0,0) node[dot,red] (\n) {};
\draw[red,very thin] \foreach \t [count = \i from 0] in {0,\theta,...,359} {(\n) -- (P\i) };
\draw (0,-1.5) node[scale=.7] {$\nn$};
\end{scope}
}
\end{tikzpicture}$$
We represent each number $n$ with a vertex (in red) whose neighbors form a cycle of length $n+3$. Our use of this $+3$ offset enables a convenient absoluteness property: if a vertex represents a number in a graph, then in any extension of this graph where it still represents a number, it must represent the same number as originally. (This wouldn't have been true, for example, had we represented $0$ with an isolated node or $2$ with a node adjacent to two adjacent nodes.) So we represent $n$ with an $(n+3)$-cycle. A node encodes a number exactly when it's neighbors form a nonempty connected set with all vertices of degree $2$, and this is expressible in the language of finite modal graph theory.

The arithmetic order relation $n\leq m$ on these representations is expressible in the language of finite modal graph theory. Namely, if we have a representation of $n$ and a representation of $m$, then $n\leq m$ if and only if possibly, there is a node whose neighbors are each adjacent to distinct neighbors of $n$ and $m$, in such a way that these associations form an injection from all but three neighbors of $n$ with at most all but three neighbors of $m$, like this:
$$\begin{tikzpicture}[scale=.8]
\pgfmathsetmacro\theta{360/7}
\draw[rotate=-180] (0:1) \foreach \t [count = \i from 0] in {0,\theta,...,359} { -- (\t:1) node[dot] (P\i) {} } -- cycle;
\draw (0,0) node[dot,red] (n) {};
\draw[red,very thin] \foreach \t [count = \i from 0] in {0,\theta,...,359} {(n) -- (P\i) };
\draw (0,-1.5) node[scale=.7] {$n$};
\begin{scope}[xshift=6cm]
\pgfmathsetmacro\theta{360/8}
\draw (0:1) \foreach \t [count = \i from 0] in {0,\theta,...,359} { -- (\t:1) node[dot] (Q\i) {} } -- cycle;
\draw (0,0) node[dot,red] (m) {};
\draw[red,very thin] \foreach \t [count = \i from 0] in {0,\theta,...,359} {(m) -- (Q\i) };
\draw (0,-1.5) node[scale=.7] {$m$};
\end{scope}
\draw[RawSienna,densely dotted] (P2) to[bend right] node[midway,dot] (R0) {} (Q5)
   (P3) to[bend right=10] node[midway,dot] (R1) {} (Q4)
   (P4) to[bend left=10] node[midway,dot] (R2) {} (Q3)
   (P5) to[bend left] node[midway,dot] (R3) {} (Q2)
   (2,0) node[dot] (R) {}
    (R) to[in=-180,out=80] (R3)
    (R) to[in=-180,out=80] (R2)
    (R) to[in=-180,out=-80] (R1)
    (R) to[in=-180,out=-80] (R0);
  \end{tikzpicture}$$
This particular figure illustrates that $4\leq 5$. We can similarly express that $n$ and $m$ represent the same number, by asserting that the association uses all but three neighbors of each vertex.

We can also express that $n+m=r$ for number representatives using the language of finite modal graph theory. We simply say that possibly, they all still represent numbers and there is such a one-to-one correspondence from all but three neighbors of $n$ and all but three neighbors of $m$ with all but three neighbors of $r$. And we can express that $nm=r$, by asserting that possibly, they all still represent numbers and there is a node adjacent to nodes that are each adjacent to a distinct node of $r$, and to neighbors of $n$ and $m$, in such a way that except for three neighbors each of $n$, $m$ and $r$, every pair of nodes from $n$ and $m$ arises exactly once in association with a neighbor of $r$.

Finally, let us explain how to translate arithmetic assertions to finite modal graph theory. Every arithmetic assertion is equivalent to an arithmetic assertion in which there are no compound terms, so that all atomic formulas have the form $x+y=z$, $x\cdot y=z$, $x=y$, $x<y$, $x=0$, or $x=1$, where $x$, $y$ and $z$ are variable symbols. We can translate these into the language of finite modal graph theory by asserting that $x$, $y$ and $z$ code numbers obeying the relevant identity, as we described above. Next, we extend the translation recursively:
\begin{eqnarray*}
  (\neg\varphi)^*&=& \neg\varphi^* \\
  (\varphi\wedge\psi)^* &=& \varphi^*\wedge\psi^* \\
  (\exists x\, \varphi)^* &=& \possible\exists x\,\bigl(\bigwedge_{v\in\text{FV}(\varphi)} \!\!v\text{ represents a number }\wedge\varphi^*\bigr)\\
\end{eqnarray*}
In the exists case, we include the assertion that all the variables of the formula still represent numbers, since we only want to consider extensions in the which the other parts of the graph still represent numbers. This is where it is important that our number representation is absolute under extensions.

It now follows by induction that the truth of an arithmetic sentence $\varphi$ in $\N$ is equivalent to the truth of the modal graph translation $\varphi^*$ in any particular finite graph.
\end{proof}

Since one can encode finite graphs into arithmetic, it follows that we can also make a converse translation of finite modal graph theory into arithmetic, and so these theories are mutually interpretable. They are actually bi-interpretable, since each can see how it is that it is encoded into the translated copy of the other, although we shall not formulate a precise notion of bi-interpretation of models and potentialist systems here.

\begin{corollary}
Arithmetic is interpretable in modal graph theory, using the class of all graphs (not just finite graphs).
\end{corollary}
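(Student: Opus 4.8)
The plan is to bootstrap from Theorem~\ref{Theorem.Arithmetic-is-interpretable}, which already interprets true arithmetic in \emph{finite} modal graph theory, and to transfer that interpretation into the class of all graphs by exploiting the fact that finiteness is itself expressible in the modal language. Concretely, let $\phi\in\mathcal{L}_\sim^{\possible}$ be the sentence furnished by Theorem~\ref{Theorem.Finiteness-is-expressible}, so that a graph satisfies $\phi$ exactly when it is finite. The class of finite graphs is therefore a modally definable subclass of the class of all graphs, and the idea is to relativize every modality appearing in the translation $\varphi\mapsto\varphi^*$ of Theorem~\ref{Theorem.Arithmetic-is-interpretable} to this subclass.

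First I would define a relativization $\theta\mapsto\theta^{\phi}$ on sentences of $\mathcal{L}_\sim^{\possible}$: it fixes atomic formulas, commutes with Boolean connectives, and replaces each $\possible\chi$ by $\possible(\phi\wedge\chi^{\phi})$ and each $\necessary\chi$ by $\necessary(\phi\to\chi^{\phi})$ (these two clauses are mutually dual, since relativization commutes with negation, so it suffices to treat $\possible$). The central lemma, proved by a routine induction on $\theta$, is that for every \emph{finite} graph $G$,
$$G\satisfies\theta^{\phi}\ \text{(in the class of all graphs)}\qquad\Iff\qquad G\satisfies\theta\ \text{(in the class of finite graphs).}$$
The only case with content is the modal step: in the class of all graphs, $G\satisfies\possible(\phi\wedge\chi^{\phi})$ means there is a graph extension $G\of H$ with $H\satisfies\phi$, hence $H$ finite by Theorem~\ref{Theorem.Finiteness-is-expressible}, and $H\satisfies\chi^{\phi}$; by the induction hypothesis applied to the finite graph $H$, this is equivalent to the existence of a finite extension $G\of H$ with $H\satisfies\chi$ in the finite-graph system, i.e.\ to $G\satisfies\possible\chi$ there. (One uses here that the extensions of a finite graph inside the finite-graph potentialist system are precisely the finite graphs extending it.)

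With the lemma in hand I would finish as follows. Let $\varphi\mapsto\varphi^*$ be the translation of Theorem~\ref{Theorem.Arithmetic-is-interpretable}, so that $\<\N,+,\cdot,0,1,<>\satisfies\varphi$ iff $\varphi^*$ holds in every finite graph in the finite-graph system, and set $\varphi^{\star}:=\phi\to(\varphi^*)^{\phi}$, a sentence of $\mathcal{L}_\sim^{\possible}$. Any infinite graph satisfies $\varphi^{\star}$ vacuously, since it fails $\phi$, while a finite graph $G$ satisfies $\varphi^{\star}$ iff it satisfies $(\varphi^*)^{\phi}$, which by the lemma holds iff $G$ satisfies $\varphi^*$ in the finite-graph system. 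Hence $\varphi^{\star}$ holds in \emph{every} graph precisely when $\varphi^*$ holds in every finite graph, i.e.\ precisely when $\N\satisfies\varphi$; so $\varphi\mapsto\varphi^{\star}$ is the desired interpretation of arithmetic into modal graph theory over the class of all graphs.

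The main obstacle is getting the relativization of the modalities right, and in particular seeing that ``possibly, while remaining finite'' in the big potentialist system reproduces exactly the accessibility relation of the finite-graph system. This is not automatic: without the relativization to $\phi$, a vertex whose neighbors form an infinite connected graph in which every vertex has degree $2$ would spuriously ``represent a number'' in some infinite extension, wrecking the coding of Theorem~\ref{Theorem.Arithmetic-is-interpretable}. It is precisely Theorem~\ref{Theorem.Finiteness-is-expressible} --- together with the observation in its proof that finiteness of a set can be witnessed inside a possibly larger finite extension --- that rules this out, and once the relativization lemma is in place the rest is the bookkeeping sketched above.
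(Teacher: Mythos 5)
Your proposal is correct and follows essentially the same route as the paper, which likewise observes that since finiteness is expressible, one can define relativized operators $\possible_{\textup{Fin}}$ and $\necessary_{\textup{Fin}}$ (``true in some/every finite extension'') and thereby transfer the interpretation from the finite-graph system to the class of all graphs. Your write-up simply fills in the relativization induction that the paper leaves implicit.
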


\begin{proof}
The point is that since theorem \ref{Theorem.Finiteness-is-expressible} shows that finiteness is expressible in modal graph theory, we can define the class of finite graphs within the class of all graphs, and therefore we can define the finiteness modal operators $\possible_{\textup{Fin}}$ and $\necessary_{\textup{Fin}}$ of \emph{true in some finite extension} and \emph{true in all finite extensions}, respectively.
\end{proof}

Let us consider next the case of countable modal graph theory. Let $H_{\omega_1}$ be the set of all hereditarily countable sets, the sets having a countable transitive closure. This is a model of $\ZFCm$, meaning Zermelo-Frankael set theory without the power set axiom (but see \cite{GitmanHamkinsJohnstone2016:WhatIsTheTheoryZFC-Powerset?} concerning a subtlety about the axiomatization). We shall show that truth in the hereditarily countable sets is interpretable in countable modal graph theory. Let us denote by $\Gamma\oplus\Lambda$ the disjoint sum of graphs $\Gamma$ and $\Lambda$. 

\begin{theorem}\label{Theorem.Hereditarily-countable-set-theory-is-interpretable}
Hereditarily countable set theory is interpretable in countable modal graph theory. We shall represent hereditarily countable sets with countable graphs and vertices and define a translation $\varphi\mapsto\varphi^*$ of set-theoretic assertions $\varphi$ to modal graph-theoretic assertions $\varphi^*$, such that
$$\<H_{\omega_1},\in>\satisfies\varphi(a_0,\dots,a_n)\quad\Iff\quad \Gamma_0\oplus\dots\oplus\Gamma_n\satisfies\varphi^*(\hat a_0,\ldots,\hat a_n),$$
where $(\Gamma_i,\hat a_i)$ is a countable graph and vertex representing the set $a_i$.
\end{theorem}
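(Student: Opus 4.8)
The plan is to extend the strategy of Theorem~\ref{Theorem.Arithmetic-is-interpretable}. I would represent each hereditarily countable set by a countable graph carrying a distinguished vertex, arrange that being such a code is expressible in modal graph theory and---on the extensions that matter---absolute, and then translate the set-theoretic quantifiers using the $\possible$ operator. Since a hereditarily countable set $a$ has countable transitive closure, $(\TC(\{a\}),\in,a)$ is a countable well-founded extensional pointed relation, and by the Mostowski collapse every such countable pointed structure arises in this way from a unique set; so it will be enough to code these pointed structures faithfully.

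For the coding I would use a \emph{node vertex} $v_x$ for each $x\in\TC(\{a\})$, realized as a rigid hub in the manner of the number-hubs of Theorem~\ref{Theorem.Arithmetic-is-interpretable}, and I would record each ordered pair of distinct node vertices by a rigid \emph{decision gadget}---say a gadget-marked path of length $5$ to encode $z\in y$ and of length $7$ to encode $z\notin y$---attached to their neighbor-cycles rather than to the hubs themselves. The crucial ingredient beyond the arithmetic case is to include in the code a rigid copy of $\omega$, built as in the proof of Theorem~\ref{Theorem.Countability-is-expressible}, together with a pinned surjection $e$ from this copy onto the node vertices, each value $e(n)$ fixed by a unique enumeration gadget. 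Requiring in $\mathrm{Code}(v)$ that $e$ be a total surjection, that every ordered pair of node vertices carry exactly one decision gadget, and that the $\omega$-copy and all of this be present forces the code to be \emph{complete}: in any extension in which $\mathrm{Code}(v)$ still holds, the copy of $\omega$ is unchanged, being rigid, hence $e$ and its range---the set $D$ of node vertices---are unchanged, and hence the membership relation $E$ that the decision gadgets impose on $D$ is unchanged, so that the coded set is unchanged. Because all of this pinned structure hangs off flexible neighbor-cycle vertices, auxiliary gadgets---the $\omega$-chains and coding bijections used below---can always be attached without disturbing it.

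I would then check that $\mathrm{Code}(v)$ is expressible in $\mathcal{L}_\sim^{\possible}$: the demands that the component of $v$ carry a rigid copy of $\omega$, a pinned surjective enumeration of the node vertices, and a complete system of decision gadgets, that the resulting relation $E$ be extensional, and that $v$ reach every node vertex by $E$-descent, are all first-order over the now absolute set of node vertices; while well-foundedness of $E$ is expressed by the $\mathcal{L}_\sim^{\possible}$-sentence asserting that necessarily, so long as the pinned code structure remains present, no attached $\omega$-chain is placed in $E$-decreasing correspondence with node vertices---by completeness, such a witness can be added in some extension precisely when $E$ is genuinely ill-founded. Membership and equality between codes are then expressed with the possibility operator: $\mathord{=^*}(v,w)$ asserts that possibly there is a gadget-coded isomorphism of the two codes matching $v$ with $w$, which by uniqueness of the collapse exists precisely when the codes represent the same set, and $\mathord{\in^*}(v,w)$ asserts that possibly there is a node vertex $z$ with $z\in w$ in the code of $w$ together with a gadget-coded isomorphism of the code below $z$ onto the code of $v$. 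Assuming, as in Theorem~\ref{Theorem.Arithmetic-is-interpretable}, that atomic set-theoretic formulas are among $x\in y$ and $x=y$, I would put $(x\in y)^*=\mathord{\in^*}(x,y)$, $(x=y)^*=\mathord{=^*}(x,y)$, $(\neg\varphi)^*=\neg\varphi^*$, $(\varphi\wedge\psi)^*=\varphi^*\wedge\psi^*$, and
$$(\exists y\,\varphi)^*\quad=\quad\possible\,\exists y\,\Bigl(\mathrm{Code}(y)\wedge\bigwedge_{v\in\text{FV}(\varphi)}\mathrm{Code}(v)\wedge\varphi^*\Bigr).$$
An induction on $\varphi$---using that any countable graph extends, via a disjoint component, to one coding any prescribed hereditarily countable set, that on such an extension each old code still codes its set by completeness, and that graph isomorphisms preserve $\mathcal{L}^{\possible}$ truth by the renaming lemma, so that the particular choice of codes is immaterial---then yields
$$\<H_{\omega_1},\in>\satisfies\varphi(a_0,\dots,a_n)\quad\Iff\quad\Gamma_0\oplus\dots\oplus\Gamma_n\satisfies\varphi^*(\hat a_0,\dots,\hat a_n).$$

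The hard part will be the gadget engineering of the second paragraph: realizing the node vertices, the $\omega$-copy, the enumeration $e$, and the decision gadgets so that the entire pinned core is genuinely rigid under $\mathrm{Code}$-preserving extensions---so that no new node vertex or membership edge can be slipped in---while keeping enough flexible attachment points that the $\omega$-chains and coding bijections required for the well-foundedness clause and for $\mathord{\in^*}$ and $\mathord{=^*}$ can always be added; and then confirming that the ``necessarily, no attached descending $\omega$-chain'' formulation faithfully captures well-foundedness of the coded relation. Everything else is bookkeeping parallel to the arithmetic case.
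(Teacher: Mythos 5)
Your proposal follows essentially the same route as the paper: hereditarily countable sets are coded as countable pointed well-founded extensional relations on the neighbors of a hub vertex, absoluteness of the code under extensions is secured by pinning a copy of $\omega$ (in the sense of theorem \ref{Theorem.Countability-is-expressible}) in correspondence with the code's constituents, well-foundedness is expressed by a $\necessary$-clause ranging over code-preserving extensions, equality and membership of codes are expressed as possible gadget-coded isomorphisms via Mostowski collapse uniqueness, and quantifiers are translated by $\possible\exists$ relativized to codes exactly as in theorem \ref{Theorem.Arithmetic-is-interpretable}. The differences (decision gadgets recording both $\in$ and $\notin$, a surjection rather than a bijection from the $\omega$-copy, descending chains rather than minimal-element-free subsets for ill-foundedness) are implementation variants at the same level of informality as the paper's own gadget engineering.
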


\begin{proof}
Every hereditarily countable set $a$ is an element of a countable transitive set $t$, such as the transitive closure of its singleton $\TC(\singleton{a})$. The structure $\<t,\in>$ is a countable set with a well-founded extensional relation $\in$, and every such well-founded extensional relation on a countable set is isomorphic to a countable transitive set via the Mostowski collapse.

We define that a \emph{graph code} for a hereditarily countable set is a graph consisting of a node $t$, whose neighbors are related by a well-founded and extensional relation $x\lperpdown y$, defined to hold when there are nodes between $x$ and $y$ as depicted here:
$$\begin{tikzpicture}
\draw[rotate=15] (0,0) node[dot=1pt,label=left:$x$] {} -- (1,0) node[Orange,dot] {} -- (2,0) node[Orange,dot] {} -- (3,0) node[dot=1pt,label=right:$y$] {}
      (2,0) -- (2,-.5) node[Orange,dot] {};
\end{tikzpicture}$$
In addition, there shall be a copy of $\omega$ in the sense of theorem \ref{Theorem.Countability-is-expressible} together with a bijection of the copy of $\omega$ with the neighbors of $t$ and the nodes used in the $\lperpdown$ relation. The property of being a graph code is expressible in modal graph theory, because the extensionality of $\lperpdown$ is expressible directly without modal operators by the assertion that distinct neighbors of $t$ have distinct sets of $\lperpdown$-predecessors amongst the neighbors of $t$, and the well-foundedness of $\lperpdown$ is expressible by the assertion that necessarily, in any extension in which the copy of $\omega$ is still a copy of $\omega$ and still provides a bijection with the neighbors of $t$ and the supplemental nodes used in the $\lperpdown$ relation, then there is no node whose neighbors are a set of neighbors of $t$ having no $\lperpdown$-minimal element. Note that we use the bijective copy with $\omega$ to ensure that in the graph extension, there are no new neighbors of $t$ and no new instances of the $\lperpdown$ relation; this is a form of absoluteness in our representation similar to what we had used in theorem \ref{Theorem.Arithmetic-is-interpretable} in the finite case. In addition, there will be a vertex $\hat x$ pointing at $t$ and at one of the neighbors $x$ of $t$. The graph, together with the vertex $\hat x$ code the set that would result from the image of $x$ under the Mostowski collapse of the $\lperpdown$ relation on the neighbors of $t$.

We can express that two graph codes $(\Gamma,x)$ and $(\Gamma',x')$ code the same set by asserting that possibly, there is a node pointing at nodes that are each adjacent to a node in $\Gamma$ and to a node in $\Gamma'$, as in the proof of theorem \ref{Theorem.Arithmetic-is-interpretable}, in such a way that they make a isomorphic correspondence between $\lperpdown$-downward closed subsets of $\Gamma$ and $\Gamma'$, which furthermore associates $x$ with $x'$. It will follows that the set coded by $x$ via $\Gamma$ will be the same as the set coded by $x'$ via $\Gamma'$. Similarly, we can express that one code $(\Gamma,x)$ codes an element of another $(\Lambda,y)$, if $(\Gamma,x)$ is isomorphic to the code formed by a $\lperpdown$-predecessor of $y$ in $\Lambda$. This tells us how to interpret the atomic formulas $x=y$ and $x\in y$ in terms of the codes. And we can simply extend the interpretation recursively through Boolean connectives and quantifiers as in theorem \ref{Theorem.Arithmetic-is-interpretable}, establishing the equivalence stated in the theorem by induction.
\end{proof}

\begin{corollary}
Second-order arithmetic is mutually interpretable with countable modal graph theory.
\end{corollary}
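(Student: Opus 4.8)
The plan is to prove the two directions of interpretability by routing everything through the structure $\langle H_{\omega_1},\in\rangle$ and appealing to the classical fact that second-order arithmetic is mutually interpretable --- indeed bi-interpretable --- with hereditarily countable set theory. For the first direction, recall that second-order arithmetic is interpretable in $\langle H_{\omega_1},\in\rangle$: the natural numbers and their arithmetic are outright definable in $H_{\omega_1}$, and a real $x\subseteq\omega$ can be decoded as a well-founded extensional relation on $\omega$ and thence, via the Mostowski collapse, as a hereditarily countable set --- this is essentially the coding already used in the proof of theorem \ref{Theorem.Hereditarily-countable-set-theory-is-interpretable}. Composing this interpretation with that theorem, which shows $\langle H_{\omega_1},\in\rangle$ is interpretable in countable modal graph theory, and using transitivity of interpretability, we conclude that second-order arithmetic is interpretable in countable modal graph theory.

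For the converse direction, the key point is that when we restrict to the potentialist system of \emph{countable} graphs, every world and every extension of a world is an element of $H_{\omega_1}$, so the entire Kripke model lives inside $H_{\omega_1}$. By the renaming lemma (or, if one prefers, by passing to the embedded extension relation via theorem \ref{Theorem.Direct-extension-possibility-same-as-embedded-extension}) we lose no modal truths by taking all graphs to have vertex set a subset of $\omega$. Since $H_{\omega_1}\satisfies\ZFCm$, we may then define by recursion on the modal formula $\varphi\in\mathcal{L}_\sim^{\possible}$ the satisfaction relation ``$G\satisfies\varphi[\bar a]$ in the countable potentialist system,'' with the atomic, Boolean, and quantifier clauses the usual Tarskian ones and the modal clause $\possible\varphi$ quantifying over the countable extensions of $G$ --- a quantifier that is perfectly legitimate inside $H_{\omega_1}$. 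Hence countable modal graph theory is interpretable in $\langle H_{\omega_1},\in\rangle$, and composing once more with the interpretation of $\langle H_{\omega_1},\in\rangle$ in second-order arithmetic yields the remaining direction. Since each interpretation in the chain is faithful, this establishes mutual interpretability; tracing through the codings, one sees that the interpretations compose to a bi-interpretation, exactly as in the finite case discussed after theorem \ref{Theorem.Arithmetic-is-interpretable}.

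The main --- essentially the only --- obstacle is the definability of the modal satisfaction relation inside $H_{\omega_1}$: one must check that restricting the potentialist system of all countable graphs to those with vertices in $\omega$ genuinely preserves modal truth and that the recursion on $\varphi$ can be carried out within $\ZFCm$. This is precisely the phenomenon noted in section \ref{Section.Metamathematical-issues}, where the full class of all graphs resists such a truth predicate because of the enormous cardinalities it can express, but the restriction to countable graphs makes the recursion unproblematic once all the relevant cardinalities are bounded below $\omega_1$. Everything else is transitivity of interpretability together with the classical bi-interpretability of $\langle H_{\omega_1},\in\rangle$ with second-order arithmetic.
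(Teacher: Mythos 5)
Your proposal is correct and follows essentially the same route as the paper: one direction composes theorem \ref{Theorem.Hereditarily-countable-set-theory-is-interpretable} with the classical bi-interpretability of second-order arithmetic and $\langle H_{\omega_1},\in\rangle$, and the other uses the renaming lemma to reduce to graphs coded inside $H_{\omega_1}$, where the modal satisfaction recursion is unproblematic. Your write-up merely spells out in more detail the definability-of-satisfaction step that the paper's proof leaves implicit.
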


\begin{proof}
Second-order arithmetic is bi-interpretable with the structure $\<H_{\omega_1},\in>$, and this latter structure can define representations of any countable graph up to isomorphism. By the renaming lemma, the modal truths of any countable graph are the same as for the copies available in $H_{\omega_1}$, and so this structure can interpret countable modal graph theory.
\end{proof}

We could have used this set-theoretic representation even in the case of finite modal graph theory, representing every hereditarily finite set with a finite extensional relation $\lperpdown$ relation on a set together with a bijection to a finite cycle (thereby providing the absoluteness). This would show that $\<H_\omega,\in>$ is interpretable in finite modal graph theory.

But similarly, let us use the method on larger cardinals. The key requirement is a notion of stability for the cardinality in question. Let us define that a cardinal $\kappa$ is \emph{stably representable} in modal graph theory if there is a property $\phi$ expressible in modal graph theory with the following properties:
\begin{enumerate}
  \item There is a graph $G$ with a vertex $v$ satisfying $\phi(v)$.
  \item If $\phi(v)$ holds in a graph $G$ of a vertex $v$, then $v$ has exactly $\kappa$ many neighbors in $G$.
  \item The truth of $\phi(v)$ in $G$ depends only on the induced subgraph consisting of $v$ and its neighbors and the neighbors of the parameters (suppressed).
  \item If $\phi(v)$ holds in $G$ and also in an extension graph $H$, then $v$ has the same neighbors in $G$ as in $H$.
\end{enumerate}
This was the key property of our representation of $\omega$ that enabled the absoluteness feature we used in theorem \ref{Theorem.Hereditarily-countable-set-theory-is-interpretable}.

\begin{theorem}\label{Theorem.H_kappa+-is-interpretable}
If a cardinal $\kappa$ is stably representable in modal graph theory, then $\<H_{\kappa^+},\in>$ is interpretable in modal graph theory.
\end{theorem}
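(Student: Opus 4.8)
The plan is to generalize the argument of Theorem~\ref{Theorem.Hereditarily-countable-set-theory-is-interpretable}, replacing the copy of $\omega$ used there with the stable representation $\phi$ of $\kappa$. The main idea is that a member of $H_{\kappa^+}$ is the same as an isomorphism type of a well-founded extensional relation on a set of size at most $\kappa$, via the Mostowski collapse. So I would define a \emph{graph code} for an element of $H_{\kappa^+}$ to be a graph consisting of a node $t$ whose neighbors carry a well-founded, extensional relation $x\lperpdown y$ (coded exactly as in Theorem~\ref{Theorem.Hereditarily-countable-set-theory-is-interpretable}, with orange intermediary nodes), together with a vertex $v$ satisfying $\phi(v)$ and a bijection pairing the neighbors of $v$ with the neighbors of $t$ and with all the auxiliary nodes used in the $\lperpdown$ relation. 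Property~(2) of stable representability guarantees there are at most $\kappa$ neighbors of $t$ and at most $\kappa$ auxiliary nodes, so the coded structure has size at most $\kappa$; and since every well-founded extensional relation on a set of size $\leq\kappa$ is isomorphic to a transitive set in $H_{\kappa^+}$, the codes capture exactly the elements of $H_{\kappa^+}$. A further distinguished vertex $\hat x$ pointing at $t$ and at a chosen neighbor $x$ of $t$ picks out the particular element coded, namely the image of $x$ under the Mostowski collapse.

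Next I would check that ``being a graph code'' is expressible in modal graph theory. Extensionality of $\lperpdown$ is expressible directly in $\mathcal{L}_\sim$, as before. For well-foundedness, I would use exactly the modal device of Theorem~\ref{Theorem.Hereditarily-countable-set-theory-is-interpretable}: necessarily, in any extension in which $\phi(v)$ still holds and the bijection still pairs the neighbors of $v$ with the neighbors of $t$ and the auxiliary nodes, there is no node whose neighbor set is a nonempty set of neighbors of $t$ with no $\lperpdown$-minimal element. Properties~(3) and~(4) of stable representability are precisely what is needed here: (4) ensures that passing to such an extension adds no new neighbors of $t$ and no new instances of $\lperpdown$, so the displayed absoluteness works, and (3) ensures the whole property depends only on the relevant induced subgraph, matching the locality used throughout the expressibility results of Section~\ref{Section.Expressive-power-of-modal-graph-theory}. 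Then, just as in Theorem~\ref{Theorem.Hereditarily-countable-set-theory-is-interpretable}, I would express ``$(\Gamma,x)$ and $(\Gamma',x')$ code the same set'' by saying that possibly there is a node witnessing an isomorphism of the $\lperpdown$-downward-closed substructures carrying $x$ to $x'$, and ``$(\Gamma,x)$ codes an element of $(\Lambda,y)$'' by saying $(\Gamma,x)$ is isomorphic to the code at a $\lperpdown$-predecessor of $y$ in $\Lambda$. These interpret the atomic formulas $x=y$ and $x\in y$.

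Finally I would extend the translation $\varphi\mapsto\varphi^*$ recursively through Boolean connectives and quantifiers exactly as in the finite and countable cases, taking the disjoint sum $\Gamma_0\oplus\dots\oplus\Gamma_n$ of the parameter codes as the ambient graph and, in the quantifier clause, conjoining the assertion that all free variables still code elements of $H_{\kappa^+}$ (i.e.\ are still graph codes) under the relevant extension, which is legitimate precisely because of the absoluteness guaranteed by property~(4). An induction on $\varphi$ then yields $\<H_{\kappa^+},\in>\satisfies\varphi(\bar a)$ iff $\Gamma_0\oplus\dots\oplus\Gamma_n\satisfies\varphi^*(\hat a_0,\dots,\hat a_n)$. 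The main obstacle, as in the countable case, is verifying that the well-foundedness clause really does its job: one must be sure that the modal ``necessarily'' quantifies over enough extensions to detect any infinite descending $\lperpdown$-chain, yet that property~(4) pins the coded structure down rigidly so that no spurious chains or new elements appear; getting the interplay between these two requirements right is the delicate point, and it is exactly what stable representability was axiomatized to deliver.
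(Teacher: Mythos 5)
Your proposal is correct and follows essentially the same route as the paper, which proves this theorem by exactly the substitution you describe: rerun the $H_{\omega_1}$ argument with the stable representation of $\kappa$ in place of the copy of $\omega$, using the stability clauses to secure the absoluteness needed for the well-foundedness test. Your write-up is in fact more detailed than the paper's own three-sentence proof, and correctly identifies properties (2)--(4) of stable representability as the points doing the work.
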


\begin{proof}
We can use the same method as in theorem \ref{Theorem.Hereditarily-countable-set-theory-is-interpretable}, except using a bijection with a stable representation of $\kappa$, rather than a copy of $\omega$. In this way, we represent sets in $H_{\kappa^+}$ using well-founded extensional relations on sets of size $\kappa$, with a distinguished vertex pointing at the set being represented. Once again, we can express the equivalence of codes and the set membership relation on the codes in the language of modal graph theory.
\end{proof}

Since $H_\kappa$ is definable inside $H_{\kappa^+}$, it follows that we can also interpret $\<H_\kappa,\in>$ in modal graph theory, when $\kappa$ is stably representable.

\begin{theorem}\label{Theorem.Stable-transfers-up}\
\begin{enumerate}
  \item If $\kappa$ is stably representable in modal graph theory, then so is $\kappa^+$.
  \item If $\kappa$ is stably representable in modal graph theory, then so is $2^\kappa$.
  \item If $\kappa$ is stably representable in modal graph theory, then so are $\aleph_\kappa$ and $\beth_\kappa$.
  \item If $\kappa$ is stably representable in modal graph theory, then so is the next $\beth$-fixed-point above $\kappa$.
  \item If $\kappa$ is stably representable in modal graph theory, then so is the first $\beth$-hyper-fixed-point above $\kappa$---the first cardinal $\lambda$ that is the $\lambda$th $\beth$-fixed-point above $\kappa$.
\end{enumerate}
\end{theorem}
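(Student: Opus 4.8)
The plan is to prove all five parts by a single template and then indicate how each target cardinal $\lambda$ instantiates it. In each case we will produce a graph $G$ with a distinguished vertex $v$ whose neighbor set $N(v)$ has size exactly $\lambda$ and whose connected component carries a graph-coded auxiliary structure --- built from the $\lperpdown$-relation and ordinal/set codes exactly as in Theorems \ref{Theorem.Arithmetic-is-interpretable} and \ref{Theorem.Hereditarily-countable-set-theory-is-interpretable} --- from which the property ``$v$ has $\lambda$ neighbors'' is recoverable by a modal graph-theoretic formula $\phi_\lambda(v)$. Conditions (1) and (3) in the definition of stable representability are then essentially automatic: for (1) one takes $G$ to be the witnessing graph, and for (3) one arranges $\phi_\lambda$ to inspect only the component of $v$. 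The real content lies in condition (4), the absoluteness of $N(v)$ under extension, together with the cardinality computation in (2). A convenient preliminary normalization, itself provable by the anchoring idea below, is that a stable representation of $\kappa$ may be taken to come equipped with a graph-coded well-order of $N(u)$ of order type $\kappa$ (enrich the representation with a graph-coded cardinal-ordering of $N(u)$ and pin its ``between''-nodes by a bijection with the already-pinned set $N(u)$); this lets us index along $\kappa$ in the recursions of parts (3)--(5).

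For part (1) and part (2) the template is already visible. For $\kappa^+$, let $\phi_{\kappa^+}(v)$ assert that each neighbor of $v$ is the distinguished vertex of a graph code for an ordinal, in the sense of Theorem \ref{Theorem.Hereditarily-countable-set-theory-is-interpretable}, each such code being anchored to its own copy of the stable $\kappa$-representation (legitimate because every ordinal below $\kappa^+$ is the designated point of a code of size $\kappa$); that distinct neighbors carry distinct ordinals; that the coded ordinals form a downward-closed set; that every proper initial segment of this set injects into a $\kappa$-copy (a $\possible$ assertion); and that the whole set does not ($\necessary$ there is no such injection). Then the coded ordinals are exactly those below $\kappa^+$, so $|N(v)|=\kappa^+$. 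For (4): in an extension $H$ the anchored codes still represent the same ordinals, so any new neighbor of $v$ would carry some ordinal, necessarily $<\kappa^+$ by the size constraint, hence duplicating an existing neighbor --- a contradiction. For $2^\kappa$, let $\phi_{2^\kappa}(v)$ assert that there is a $\kappa$-copy $u$ such that each neighbor of $v$ is adjacent to some set of neighbors of $u$ (its \emph{pattern}), distinct neighbors have distinct patterns, and every pattern over $N(u)$ is realized; this is the argument of Theorem \ref{Theorem.Size-continuum-is-expressible} with $\omega$ replaced by the $\kappa$-copy. Then $|N(v)|=2^\kappa$, and (4) holds because $N(u)$ is pinned ($u$ is stable) while a new neighbor of $v$ in an extension would realize a pattern already realized in $G$, again duplicating.

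For parts (3), (4) and (5) we run a transfinite recursion inside a single graph code. For $\aleph_\kappa$ --- the case of $\beth_\kappa$ being identical with ``power set'' in place of ``successor'' --- $\phi_{\aleph_\kappa}(V)$ asserts that there is a well-ordered $\kappa$-copy $u$ and a graph-coded partition of $N(V)$ into blocks $\langle B_\alpha\mid\alpha\in N(u)\rangle$ along the well-order, such that $B_0$ is a copy of $\omega$ in the sense of Theorem \ref{Theorem.Countability-is-expressible}, such that $B_{\alpha+1}$ stands to $B_\alpha$ exactly as in the part-(1) successor construction, with its members coding the ordinals below $|B_\alpha|^+$ anchored now to $B_\alpha$, and such that $B_\lambda=\bigcup_{\alpha<\lambda}B_\alpha$ at limit indices. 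Since every infinite cardinal $\kappa$ is a limit ordinal and $\aleph$ is continuous at limits, a straightforward induction gives $|B_\alpha|=\aleph_\alpha$ for all $\alpha<\kappa$ and hence $|N(V)|=\aleph_\kappa$. Condition (4) propagates up the recursion: $B_0$ is pinned as the neighbor set of a copy of $\omega$, the successor step is pinned by the part-(1) argument with anchor $B_\alpha$, limit steps are pinned as unions, and the block-indexing is pinned because $u$ is stable; so $N(V)$ cannot grow. Parts (4) and (5) are the same recursion, but with the index set taken to be the ordinals below the target $\lambda$ itself --- the first $\beth$-fixed-point above $\kappa$ for (4), the first $\beth$-hyper-fixed-point above $\kappa$ for (5) --- each such ordinal $\alpha<\lambda$ anchored to whichever earlier block has size $\geq|\alpha|$, and the additional defining clauses (``$\beth_\lambda=\lambda$'', ``least above $\kappa$'', and ``$\lambda$ is the $\lambda$-th $\beth$-fixed-point above $\kappa$'') spelled out internally via the $\beth$-operation, and in (5) its fixed-point enumeration, computed along the coded structure --- all expressible by the now-standard combination of $\lperpdown$-codes, well-foundedness via $\necessary$, and injections and patterns via $\possible$.

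The main obstacle is condition (4) in parts (3)--(5). Here $N(V)$ has size vastly larger than $\kappa$, so it cannot be anchored to a single stable $\kappa$-copy as in Theorem \ref{Theorem.Hereditarily-countable-set-theory-is-interpretable}; the resolution is precisely the level-by-level recursive anchoring, in which each ordinal or set that appears is anchored to an earlier block large enough to host it, and one must check both that this scheme is internally expressible by one modal formula and that it genuinely forces rigidity --- that a new vertex, or a new $\lperpdown$-edge, in an extension is obstructed at its level by the relevant no-duplication constraint. The remaining work is essentially bookkeeping: writing the single modal formula that simultaneously recognizes the block decomposition, the $\beth$-operation along it, and (for (4) and (5)) the fixed-point and hyper-fixed-point conditions; this is routine given the coding machinery of Theorems \ref{Theorem.Arithmetic-is-interpretable} and \ref{Theorem.Hereditarily-countable-set-theory-is-interpretable}.
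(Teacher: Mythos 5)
Your parts (1)--(3) are essentially sound and track the paper's constructions: the paper represents $\kappa^+$ by a direct $\lperpdown$-well-ordering of $N(v)$ with built-in bijections of proper initial segments onto the stable $\kappa$-copy, rather than by attaching a separate ordinal code to each neighbor, but the two are interchangeable, and your $2^\kappa$ and $\aleph_\kappa/\beth_\kappa$ constructions match the paper's almost exactly. The gap is in parts (4) and (5). You index the recursion by the ordinals below the target $\lambda$ itself and propose to secure stability by anchoring each index $\alpha<\lambda$ ``to whichever earlier block has size $\geq|\alpha|$.'' That clause is being asked to do two jobs that conflict. It must exclude new elements appearing on top of the index order in an extension (a new top element would have $\lambda$-many predecessors, too many to inject into any single earlier block, each of which has size $\beth_\gamma<\lambda$). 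But in part (5) there are genuinely occurring indices $\alpha<\lambda$ that are themselves $\beth$-fixed points---$\lambda$ is the $\lambda$th such point, so there are many---and for those $|\alpha|=\alpha=\beth_\alpha=\sup_{\gamma<\alpha}\beth_\gamma$ while every single earlier block has size $\beth_\gamma<\alpha$: no anchor exists, and your formula is unsatisfiable at exactly the stages the hyper-fixed-point recursion must pass through. Anchoring instead to the union of all earlier blocks repairs this but destroys the exclusion of new top elements, since that union has size $\lambda$. You flag that rigidity of the scheme ``must be checked,'' but the check is precisely where the construction breaks; it is not bookkeeping.

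The paper avoids the circularity by never indexing a recursion by a cardinal that has not already been stably represented. The next $\beth$-fixed point above $\kappa$ is $\sup_n\theta_n$ with $\theta_0=\kappa+1$ and $\theta_{n+1}=\beth_{\theta_n}$, so it is reached by an $\omega$-indexed iteration of the part-(3) operation, each stage anchored to the already-stable previous stage; the first hyper-fixed point is likewise an $\omega$-limit of iterates of the ``next fixed point'' operation, each transfinite sub-recursion running along a well-order whose length was stably represented at the preceding stage. This also dispenses with your internal clauses ``$\beth_\lambda=\lambda$'' and ``least such above $\kappa$,'' which are never needed when the target is presented as a countable supremum. If you reorganize (4) and (5) around these countable-cofinality decompositions, your level-by-level anchoring becomes the paper's argument and the fixed-point obstruction disappears.
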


\begin{proof}
Suppose that $\kappa$ is stably representable. We shall represent $\kappa^+$ by a vertex pointing at $\kappa^+$ neighbors that are well-ordered by the $\lperpdown$ relation, with the first $\kappa$ many of them fulfilling the stable representation of $\kappa$, and such that every node $u$ amongst the $\kappa^+$ beyond $\kappa$ is adjacent to a node that points at nodes forming a bijection between the $\lperpdown$-predecessors of $u$ and the $\kappa$ initial segment of the order. That is, the whole order does not have size at most $\kappa$ and not only do all these initial segments have size $\kappa$, but they come equipped already with the bijections witnessing that they have size $\kappa$. We can assert easily that $\lperpdown$ is a linear order; to assert that it is a well-order, we just have to claim that in every extension in which the copy of $\kappa$ remains stable and the $\lperpdown$-relation is still a linear order and the bijections of initial segments are still bijections with $\kappa$, there is no node pointing at a nonempty set with no $\lperpdown$-minimal element. If this is true, then it really must be a well-order, and if it is a well-order, then because of the stability with $\kappa$, none of the nodes in the order can gain new predecessors; and the $\kappa^+$ size order itself cannot gain new elements on top of all the previous nodes, because then those nodes would have $\kappa^+$ many predecessors and therefore could not exhibit a bijection with the set of size $\kappa$. So we have a stable copy of $\kappa^+$ and so it is stably representable.

To stably represent $2^\kappa$, we begin with a stable representation of $\kappa$, together with a node $P$ whose neighbors are each adjacent to distinct subsets of the set $\kappa$ many nodes, and such that furthermore, necessarily, in any extension in which the size $\kappa$ set fulfills its definition, if a node is adjacent to a subset of the set of size $\kappa$, then some neighbor of $P$ already realizes that same subset. The point is that if the neighbors of $P$ did not already represent the full power set, then we could extend by adding a node pointing out the missing the subset, and it would violate this necessity requirement. But if $P$ does represent the full power set, then any new pattern will already be represented. So we have a stable representation of $P(\kappa)$, which has size $2^\kappa$.

If $\kappa$ is stably representable, then we can stably represent a copy of $\kappa$ that is well-ordered by $\lperpdown$; this is simply a set of size $\kappa$ that is linearly ordered by $\lperpdown$, such that in every extension in which the copy of $\kappa$ is stable (so that the order has gained no new elements), there is no node pointing at a set having no $\lperpdown$-minimal element; and such that every proper initial segment of the order has size less than $\kappa$, which is to say, that in no extension in which $\kappa$ is preserved is there a bijection of $\kappa$ with an initial segment of the order. So we have now a stable representation of a well-order of length $\kappa$. To represent $\beth_\kappa$, we associate the least element of the $\kappa$-sequence with a copy of $\omega$, and at each successor node, we associate the power set of the previous node; and at limit nodes, we associate a set that is exhibited in bijection with the union of the prior sets. Since our copy of $\kappa$ is stable, we will have therefore iteratively and stably represented the iteration of the power set. Finally, we assert that there is a vertex whose neighbors are placed into bijection with all the associated sets at each stage; this will be a stable representation of $\beth_\kappa$. We can stably represent $\aleph_\kappa$ in a similar manner, simply by representing the successor cardinal at each step instead of the power set.

We now iterate this to find a stable representation of the next $\beth$-fixed point. Consider the least $\beth$-fixed point, which is the cardinal $\theta=\sup\theta_n$, where $\theta_0=\omega$ and $\theta_{n+1}=\beth_{\theta_n}$. To stably represent this, we have a copy of $\omega$ pointing out nodes to represent each $\theta_n$, each with a well-order, and then at each $\theta_{n+1}$ we assert that the previous construction has been already arranged, so that the node representing $\theta_{n+1}$ is pointing out a set of size $\beth_{\theta_n}$, along with a well-order of it. The point is that this whole iteration is sufficiently uniform that we can describe that a graph has carried out the whole construction to reach the limit node, which is the $\beth$-fixed point. To find the next $\beth$-fixed point after a given stably representable cardinal, we simply start with $\theta_0$ representing $\kappa+1$, rather than $\omega$.

Similarly, since the process of representing the next $\beth$-fixed point is stably representable, we can represent the process of iterating it sufficiently to find the first hyper-fixed-point.
\end{proof}

Theorem \ref{Theorem.Stable-transfers-up} has consequences for the possibility or impossibility of \Lowenheim-Skolem conclusions in modal graph theory. Because these various sizable cardinals are expressible in modal graph theory, they form bounds on the sizes of possible elementary substructures and extensions. A comparatively small graph, after all, cannot be elementary in a large one in modal graph theory, if the size of the small graph is expressible in the language of modal graph theory.

The theorem shows that the set-theoretic structures $H_\omega$, $H_{\omega_1}$, $H_{\omega_2}$, $V_{\omega^{\omega^\omega}+5}$ and so on are all interpretable in modal graph theory. And indeed much more.

\begin{corollary}
Set-theoretic truth in $V_\theta$, where $\theta$ is the first $\beth$-hyper-fixed point, is interpretable in modal graph theory.
\end{corollary}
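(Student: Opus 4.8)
The plan is to read the corollary off from Theorems~\ref{Theorem.Stable-transfers-up} and~\ref{Theorem.H_kappa+-is-interpretable}, once we observe that $V_\theta=H_\theta$ for a $\beth$-fixed point $\theta$. So the whole argument has three moves: promote stable representability up to $\theta$, interpret $\<H_\theta,\in>$, and identify this with $\<V_\theta,\in>$.

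First I would record that $\omega$ is stably representable in modal graph theory. This is exactly the absoluteness property of the copy-of-$\omega$ representation exploited in the proof of Theorem~\ref{Theorem.Hereditarily-countable-set-theory-is-interpretable} --- the representation can be arranged (e.g.\ by demanding via the finiteness operator that every chain element have only finitely many predecessors) so that conditions (1)--(4) of stable representability all hold. Applying Theorem~\ref{Theorem.Stable-transfers-up}(5) with $\kappa=\omega$, and noting that the first $\beth$-hyper-fixed point lies well above $\omega$ (so ``the first $\beth$-hyper-fixed point above $\omega$'' is just $\theta$), we conclude that $\theta$ is stably representable. Then, by the remark following Theorem~\ref{Theorem.H_kappa+-is-interpretable} applied with $\kappa=\theta$, the structure $\<H_\theta,\in>$ is interpretable in modal graph theory; alternatively, interpret $\<H_{\theta^+},\in>$ directly via Theorem~\ref{Theorem.H_kappa+-is-interpretable} and then relativize the translation to the definable subclass $H_\theta$, using that $\theta$ is nameable inside $H_{\theta^+}$ as its largest cardinal.

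It then remains only to identify $\<H_\theta,\in>$ with $\<V_\theta,\in>$. Since $\theta$ is a $\beth$-fixed point, $\beth_\theta=\theta$, and so for every $\alpha<\theta$ the strict monotonicity of $\beth$ gives $\Card{V_\alpha}\le\beth_\alpha<\theta$; hence $V_\theta\subseteq H_\theta$. Conversely $H_\theta\subseteq V_\theta$, because a set whose transitive closure has size $\mu<\theta$ has rank below $\mu^+\le\theta$. Thus $V_\theta=H_\theta$, and composing the translation $\varphi\mapsto\varphi^*$ of Theorem~\ref{Theorem.H_kappa+-is-interpretable} (relativized to $H_\theta$) with this identity yields the desired translation of $\<V_\theta,\in>$-truth into modal graph theory.

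Because every ingredient is either a direct appeal to an established theorem of the paper or a routine fact of set theory, there is no genuinely hard step here; the only point that rewards care is the passage from the $H$-hierarchy back to the $V$-hierarchy --- in particular keeping the $\beth$-fixed-point hypothesis in view, since $V_\theta=H_\theta$ would fail for an ordinary singular strong-limit-looking cardinal such as $\beth_\omega$ (which is not a fixed point), and checking that the relativization of the interpretation to the definable class $H_\theta$ inside $\<H_{\theta^+},\in>$ is genuinely first-order.
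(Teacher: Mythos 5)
Your proposal is correct and follows the same route as the paper: Theorem \ref{Theorem.Stable-transfers-up} gives stable representability of the first $\beth$-hyper-fixed point, Theorem \ref{Theorem.H_kappa+-is-interpretable} then yields interpretability of $H_\theta$, and one identifies $V_\theta$ with $H_\theta$. You simply spell out the details the paper leaves implicit (the base case for $\omega$ and the verification that $V_\theta=H_\theta$ for a $\beth$-fixed point), which are fine.
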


\begin{proof}
The theorem shows that the first $\beth$-hyper-fixed point is stably representable, and so theorem \ref{Theorem.H_kappa+-is-interpretable} shows that $V_\theta$, which is the same as $H_\theta$, is interpretable in modal graph theory.
\end{proof}

And we may proceed similarly to the next $\beth$-hyper-fixed point, the next hyper-hyper-fixed point, and so on. These will all also be stably representable, and so we can interpret set-theoretic truth for a quite a long way into the cumulative hierarchy. In this sense, modal graph theory can serve as a foundation of mathematics. 

Does it ever stop?

\begin{question}\label{Question.Set-theory-interpreted-in-modal-graph-theory?}
Is set-theoretic truth, that is, truth in the full set-theoretic universe $V$, interpretable in modal graph theory?
\end{question}

The key obstacle concerns the question of whether we can find sufficient stable representations of any given set. It is not clear how to translate definable cardinals in \ZFC, such as the first $\Sigma_3$-correct cardinal, into stable representations in modal graph theory.

There is a related question that we have been unable to resolve. In any graph $G$, let us denote by $G_x$ the set of neighbors adjacent to a given node $x$.
\begin{itemize}
  \item (Equinumerosity problem) Can we express in the language of modal graph theory that the neighbor set $G_x$ of vertex $x$ is equinumerous with the neighbor set $G_y$ of vertex $y$?
  \item (Cardinal comparability problem) Can we express in the language of modal graph theory that the neighbor set $G_x$ of vertex $x$ has cardinality less than or equal to the neighbor set $G_y$ of vertex $y$?
\end{itemize}

We conjecture that the answers are both negative. A positive answer to the cardinal comparability problem, of course, will imply a positive answer to the equinumerosity problem.

If it should turn out that contrary to our conjecture, the cardinal comparability problem has a positive solution, then we would like to mention that we will be able to express in modal graph theory that a relation such as $\lperpdown$ is well-ordered. That is, if we have a node whose neighbors are linearly ordered by the $\lperpdown$ relation, then the relation is well-ordered if and only if it is possible that every node in the order is adjacent to a set of neighbors, such that the cardinality comparisons of those neighbor sets agrees with the $\lperpdown$ order. The point is that in \ZFC, any set of cardinal sizes is well-ordered by comparability (this fact is equivalent to the axiom of choice).

This observation seems important, since the ability to express the well-order concept in modal graph theory seems likely to be highly relevant for the capacity of modal graph to interpret set-theoretic truth.

\section{Actuality operator @}\label{Section.Actuality}

Let us now discuss the fruitful extension of the modal language by means of the \emph{actuality} operator $@$, which allows one to refer to the actual world and more generally to the various worlds that are in effect referenced during the course of interpreting a modal statement. Let us explain the usage and semantics by means of several examples. The statement $\necessary(\varphi\to(\psi\iff @\psi))$, expressing the assertion that ``necessarily, if $\varphi$ is true, then $\psi$ holds if and only if $\psi$ is actually true.'' This is true in a world $w$, when in every world $u$ that $w$ can access, if $\varphi$ is true in $u$, then $\psi$ is true in $u$ if and only if $\psi$ is true in $w$. Thus, the @ operator in effect references truth in the original world $w$. Let us introduce the predication $@x$ to mean that individual $x$ is actual, that is, that $@\exists u\ u=x$, which asserts that $x$ is an individual of the actual world. Similarly, we introduce the quantifier $\forall@ x\ \varphi$ to mean that $\forall x\,(@x\to\varphi)$, or more simply, that $\varphi$ holds (in the current possible world of evaluation) for all actual $x$, that is, for all $x$ in the actual world. And the quantifier $\exists @x\varphi$ means $\exists x\,(@x\wedge\varphi)$, asserting that there is some actual $x$ fulfilling $\varphi$ in the current possible world of evaluation.

For example, in the modal language of graph theory with actuality, the assertion
$$\possible\exists x\forall y\left(x\sim y\iff (@y\wedge @\forall z\ \neg y\sim z)\right)$$
expresses that it is possible that there is a node adjacent to all and only the isolated nodes of the actual world.

When the modal depth of the assertion increases, there are several worlds at play in the semantics of the assertion, and a more complex usage and semantics for @ allows us to refer to them directly. The unadorned @ refers as above to the world in which the overall statement is being evaluated, and otherwise the subscripts $@_1$, $@_2$ and so on refer to the worlds referenced by corresponding subscripts on the modal operators. For example, the assertion
$$(\possible\exists x\forall @y\ Txy)\wedge\necessary_1\possible\exists x\forall @_1y\ Txy$$
asserts that possibly someone is taller than every actual person, and furthermore, this is necessarily true. The second clause asserts that in every accessible world, it is possible that someone is taller than everyone in \emph{that} world, not just in the original world. The operator $\necessary_1$ here is not a different modal operator than $\necessary$, for the two necessities themselves have the same meaning; rather, the subscript on $\necessary_1$ tells us to which world $@_1$ will now refer inside the scope of $\necessary_1$.

To illustrate further, consider the assertion that necessarily there are two possible worlds, such that every individual who is tall in one of them is short in the other. One might be tempted at first to formulate it like this:
$$\necessary\bigl[\possible_1\forall x(Tx\to @_2Sx)\wedge\possible_2\forall x(Tx\to @_1Sx)\bigr].$$
This expression, however, is not well formed; the semantics of it are not compositional, since the $@_1$ does not occur under the scope of $\possible_1$ and $@_2$ does not occur under the scope of $\possible_1$. One can, however, properly express exactly the desired statement like this:
$$\necessary_3\possible_1@_3\possible_2 \bigl[@_1\forall x(Tx\to @_2Sx)\wedge @_2\forall x(Tx\to @_1Sx)\bigr].$$
In this way, the actuality operator allows for a rich expression of truth and possibility relations between worlds.

In the potentialist system of all graphs, the assertion
$$\possible_1\bigl[\exists x\forall y(x\sim y\iff @y)\wedge\possible\exists z\forall y(z\sim y\iff (@_1y\wedge \neg @y))\bigr]$$
is true at a graph $G$ exactly when there is a larger graph $G_1$ with a node $x$ that is connected to all and only the nodes of $G$ and a further graph $G_2$ with a node $z$ connected to all and only the nodes of $G_2$ that are in $G_1$ but not in $G$.

\begin{theorem}
Both the equinumerosity problem and the cardinal comparability problem are expressible in the language of modal graph theory with actuality $\mathcal{L}^{\possible,@}_\sim$.
\end{theorem}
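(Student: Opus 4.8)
The plan is to write, for each of the two problems, a single sentence of $\mathcal{L}_\sim^{\possible,@}$ of the form $\possible\,\Phi$, where $\Phi$ describes an extension of the graph containing auxiliary vertices that code a bijection (resp.\ an injection) between the \emph{actual} neighbor sets $G_x$ and $G_y$. The role of the actuality operator is precisely to pin these two sets down once and for all: since an edge once present persists but new edges to old vertices may appear in an extension, ``$a$ is adjacent to $x$'' in the extension is strictly weaker than ``$a$ is an actual neighbor of $x$,'' and the latter is captured by the subformula $@(a\sim x)$. This is exactly the capability that the plain modal language of graph theory appears to lack, which is why the two problems were conjectured inexpressible there; the modal depth needed here is low---a single $\possible$ with $@$ referring to the world of evaluation---so no indexed actuality operators $@_1,@_2,\dots$ are required.

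First I would isolate a formula $\mathrm{Coll}(c,x)$ asserting that the neighbors of a vertex $c$ are in adjacency-bijection with the actual neighbors of $x$: every neighbor of $c$ is adjacent to exactly one $a$ with $@(a\sim x)$; no two distinct neighbors of $c$ are adjacent to a common such $a$; and every $a$ with $@(a\sim x)$ is adjacent to some neighbor of $c$. Whenever $\mathrm{Coll}(c,x)$ holds in a graph, the map sending a neighbor $p$ of $c$ to the unique actual $x$-neighbor adjacent to it is a bijection from the neighbor set of $c$ onto $G_x$. Next I would isolate $\mathrm{Inj}(m,c_x,c_y)$: every neighbor $q$ of $m$ is adjacent to exactly one neighbor of $c_x$ and to exactly one neighbor of $c_y$; every neighbor of $c_x$ is adjacent to exactly one neighbor of $m$; and no two distinct neighbors of $m$ are adjacent to a common neighbor of $c_y$. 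When this holds one extracts a bijection between the neighbors of $m$ and those of $c_x$ together with an injection of the neighbors of $m$ into those of $c_y$. Let $\mathrm{Bij}(m,c_x,c_y)$ be the strengthening that additionally demands every neighbor of $c_y$ be adjacent to exactly one neighbor of $m$. Then the desired sentences are
$$\psi_{\le}(x,y)\ :=\ \possible\,\exists c_x\,\exists c_y\,\exists m\,\bigl[\mathrm{Coll}(c_x,x)\wedge\mathrm{Coll}(c_y,y)\wedge\mathrm{Inj}(m,c_x,c_y)\bigr],$$
$$\psi_{=}(x,y)\ :=\ \possible\,\exists c_x\,\exists c_y\,\exists m\,\bigl[\mathrm{Coll}(c_x,x)\wedge\mathrm{Coll}(c_y,y)\wedge\mathrm{Bij}(m,c_x,c_y)\bigr].$$
(One could alternatively take $\psi_{=}(x,y):=\psi_{\le}(x,y)\wedge\psi_{\le}(y,x)$ and invoke the Cantor--Schr\"oder--Bernstein theorem once comparability is expressible.)

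For the forward direction I would argue that if $\psi_{\le}(x,y)$ holds at $G$, witnessed by an extension $G\of H$ and vertices $c_x,c_y,m$, then the three maps just described compose to an injection $G_x\to G_y$, so $|G_x|\le|G_y|$; symmetrically $\psi_{=}$ gives $|G_x|=|G_y|$. For the backward direction, given an injection $f\colon G_x\to G_y$ in $G$, I would build $H$ explicitly by adjoining to $G$: a fresh vertex $a'$ adjacent to $a$ and to a fresh collector $c_x$, for each $a\in G_x$; a fresh vertex $b''$ adjacent to $b$ and to a fresh collector $c_y$, for each $b\in G_y$; a fresh vertex $q_a$ adjacent to $a'$, to $(f(a))''$ and to a fresh vertex $m$, for each $a\in G_x$; and no further edges. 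Since no edge is added between two vertices of $G$, we have $G\of H$, and a direct check confirms that $H$ satisfies the matrix of $\psi_{\le}$ (and of $\psi_{=}$ when $f$ is a bijection). The main obstacle---and it is really only careful bookkeeping rather than a genuine difficulty---is to design the coding so that each ``exactly one'' clause is simultaneously forced in the forward direction and arrangeable in the backward construction: routing the matching entirely through the fresh index vertices $a'$ and $b''$, which are disjoint from $G$ and from one another, is precisely what prevents accidental coincidences (such as a vertex lying in both $G_x$ and $G_y$, or a new edge appearing to an old vertex) from corrupting the uniqueness conditions.
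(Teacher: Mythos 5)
Your proposal is correct and follows essentially the same route as the paper: the paper expresses $|G_x|\le|G_y|$ by asserting that possibly there is a node pointing at nodes indexing an injective correspondence from the \emph{actual} neighbors of $x$ to the \emph{actual} neighbors of $y$ (reusing the $\leq$-coding from the arithmetic interpretation), and reduces equinumerosity to two-way comparability exactly as in your parenthetical remark. Your $\mathrm{Coll}/\mathrm{Inj}/\mathrm{Bij}$ formulas and the explicit backward construction are a faithful, more detailed elaboration of that same idea.
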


\begin{proof}
It suffices to express only the comparability problem. For any graph $G$, let $G_v$ denote the nodes in $G$ adjacent to $v$. The relation $|G_v|\leq|G_w|$ is expressible by saying: there is some larger graph $H$ with a node that points at nodes that index an injective correspondence from the \emph{actual} neighbors of $v$ to the \emph{actual} neighbors of $w$. This is the same idea as used with $\leq$ in the proof of theorem \ref{Theorem.Arithmetic-is-interpretable}.
\end{proof}

The main point we should like to make is that modal graph theory with the actuality operator can interpret full set-theoretic truth.

\begin{theorem}\label{Theorem.Set-theoretic-truth-is-interpretable}
The modal logic of graph theory with actuality can interpret set-theoretic truth. There is a translation of set-theoretic assertions to modal graph assertions $\varphi\mapsto\varphi^*$, such that a set-theoretic sentence $\varphi$ is true in the set-theoretic universe $(V,\in)$ if and only if $\varphi^*$ is true in the empty graph, or in any particular graph.
\end{theorem}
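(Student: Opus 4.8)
The plan is to push the graph-code technique of theorem \ref{Theorem.Hereditarily-countable-set-theory-is-interpretable} up from the hereditarily countable sets to all sets, using the actuality operator in exactly the place where theorems \ref{Theorem.H_kappa+-is-interpretable} and \ref{Theorem.Stable-transfers-up} required a stable representation of a cardinal. The obstruction flagged by question \ref{Question.Set-theory-interpreted-in-modal-graph-theory?} was that, without @, one has no device to \emph{freeze} a neighbour set of arbitrary cardinality against the appearance of new neighbours in an extension; with @ this freezing is free, since one may simply quantify over the \emph{actual} neighbours of a vertex. So the translation $\varphi\mapsto\varphi^*$ and the notion of a graph code will be as before, with every mention of "neighbour of $t$" reread as "actual neighbour of $t$" and every mention of $\lperpdown$ reread as the relation $\lperpdown^@$ holding when the witnessing auxiliary nodes existed in the actual world.

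I would proceed as follows. First, redefine a \emph{graph code} for a set to be a graph with a distinguished vertex $t$ whose actual neighbours carry the auxiliary-node relation $\lperpdown^@$, which is to be extensional and well-founded, together with a pointer vertex $\hat x$ adjacent to $t$ and to one actual neighbour $x$ of $t$; the coded set is the image of $x$ under the Mostowski collapse of $\lperpdown^@$ on the actual neighbours of $t$. Since graph extensions never delete vertices or edges, "actual neighbour of $t$" and $\lperpdown^@$ are absolute under extensions once @ is pinned to the base world, and one no longer needs the copy of $\omega$ and the bijection that formerly enforced absoluteness. Now "$\hat x$ is a graph code" is expressible in $\mathcal{L}^{\possible,@}_\sim$: extensionality is a non-modal assertion about $\lperpdown^@$ among the actual neighbours of $t$, and well-foundedness is expressed by the "no bad subset" device, namely $\necessary$ there is no vertex $n$ such that the set of actual neighbours of $t$ adjacent to $n$ is nonempty with no $\lperpdown^@$-minimal element; this is correct precisely because any genuine $\lperpdown^@$-descending-closed subset of the frozen neighbour set of $t$ can be pointed at by a new vertex in some extension. (Alternatively one may use the comparability-based expression of well-order supplied by the preceding theorem, along the lines sketched in section \ref{Section.Interpretative-power-of-modal-graph-theory}.) Equality and membership of codes are expressed as in theorem \ref{Theorem.Hereditarily-countable-set-theory-is-interpretable}: two codes code the same set when possibly a new vertex points at pairing nodes exhibiting an isomorphism of the $\lperpdown^@$-downward-closed parts matching the two pointers, and one code codes an element of another when it is isomorphic to the code cut out by a $\lperpdown^@$-predecessor in the second. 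For coverage, every set $a$ lies in the \emph{set} $\TC(\singleton a)$, which is well-founded and extensional under $\in$, hence literally realizable as a graph (neighbours of a node $t$ for the elements, quadratically many auxiliary nodes for $\lperpdown$, and a pointer $\hat a$); after renaming vertices to be disjoint from whatever is in play (the renaming lemma), this graph extends any given graph, in particular the empty graph. The translation is then defined recursively as in theorems \ref{Theorem.Arithmetic-is-interpretable} and \ref{Theorem.Hereditarily-countable-set-theory-is-interpretable}: atomic formulas go to the code-equality and code-membership assertions, Boolean connectives commute, and $\exists v\,\varphi(v)$ becomes $\possible_k\exists\hat v\,\bigl(\hat v\text{ is a graph code judged via }@_k,\text{ and }\varphi^*\bigr)$ with $k$ a fresh index, each code-clause inherited from an earlier quantifier stage keeping its own $@_j$ so that it is evaluated against the world in which its code was introduced. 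One then proves by induction on $\varphi$ that $\langle V,\in\rangle\satisfies\varphi$ iff $\varphi^*$ holds in the empty graph, and indeed in every graph, the independence of the ambient graph following because the only information $\varphi^*$ extracts passes through codes that can be freely adjoined over any graph, the extra vertices of that graph never becoming actual neighbours of the relevant $t$'s (again by renaming).

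I expect the main obstacle to be precisely this last inductive bookkeeping of the actuality subscripts: one must state the induction hypothesis with enough care — quantifying over an ambient world, over a finite tuple of graph codes each tagged with the world in which it was introduced, and over the interacting $@_j$-pinned code assertions — for the quantifier step to go through compositionally, and one must verify that "is a graph code", "codes the same set", and "codes an element of" all remain correctly behaved upon passing to an extension and introducing further codes, and that $\lperpdown^@$ is genuinely immune to spurious creation or destruction of witnessing auxiliary nodes. The set-theoretic content — existence of codes, that the Mostowski collapse behaves, that the modal "no bad subset" clause really captures well-foundedness — is routine given theorem \ref{Theorem.Hereditarily-countable-set-theory-is-interpretable}; the delicate part is entirely the semantics of nested @.
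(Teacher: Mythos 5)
Your proposal is correct and follows essentially the same route as the paper's own (much briefer) proof: use @ to freeze the neighbour set of the coding node, thereby dispensing with the stable representations of section \ref{Section.Interpretative-power-of-modal-graph-theory}, code sets by well-founded extensional $\lperpdown$-relations on actual neighbours, and translate $\exists x$ as $\possible\exists x$ ranging over codes. The paper leaves the nested-@ bookkeeping implicit, so the care you devote to tagging each code with the world of its introduction is a welcome elaboration rather than a divergence.
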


\begin{proof}
This theorem follows from the methods used in proving the theorems of section \ref{Section.Interpretative-power-of-modal-graph-theory}. The main point is that the actuality operator @ will enable us avoid the need for stable representations, since with @ we can directly refer to the desired coded structure.

So we may code any set with a node whose neighbors form a set that under the $\lperpdown$ relation is well-founded and extensional. And so any set is represented by pointing to such a well-founded extensional relation and a point in it. As in theorem \ref{Theorem.H_kappa+-is-interpretable}, we can define equivalence of codes and the set membership relation. And we can translate set-theoretic assertions into modal graph theory by extending this interpretation of the atomic truths through the Boolean connectives, and interpreting the set-theoretic $\exists x$ as ``$\possible\exists x$ such that $x$ codes a set,'' just as in section \ref{Section.Interpretative-power-of-modal-graph-theory}.
\end{proof}

There are some subtleties about the metamathematical status of this theorem, since we cannot refer to set-theoretic truth inside set theory. One can interpret the theorem in \ZFC\ as a theorem scheme, a separate claim about interpreting $\Sigma_n$ set-theoretic truth in modal graph theory, for each metatheoretic $n$, but always using the same interpretation method. Alternatively, we can interpret the theorem as a true theorem in an extension of \ZFC\ to a theory such as \KM\ or just $\GBC+\ETR_\omega$, which proves the existence of first-order set-theoretic truth predicates.

\begin{question}
Is actuality @ expressible in modal graph theory?
\end{question}

We conjecture not, but we do not know how to prove this. One idea may be to show that the equinumerosity problem is not expressible in the language of modal graph theory---perhaps this can be proved by means of modal pebble games. If successful, this would show actuality is not expressible in modal graph theory, because with actuality, we can express equinumerosity.

\section{Set-theoretic and meta-mathematical issues}\label{Section.Metamathematical-issues}

Although we have claimed to define the semantics for modal truth in $\Mod(T)$, the meta-mathematics of this definition involves some set-theoretic subtleties. The basic problem is that $\Mod(T)$ is a proper class and the recursive definition of truth is not a set-like recursion. Therefore, we can't seem to undertake the definition legitimately in \ZFC. But in Kelley-Morse set theory, for example, or even merely in \Godel-Bernays set theory with the axiom of elementary transfinite recursion $\ETR$, or even just $\ETR_\omega$, then we can prove that there is a solution of the recursive definition of modal truth. We refer the readers to \cite{GitmanHamkinsHolySchlichtWilliams:The-exact-strength-of-the-class-forcing-theorem} for further discussion of the role of \ETR\ in defining truth predicates.

\begin{question}
In \ZFC\ can one define the satisfaction relation for modal graph theory for the class of all graphs?
\end{question}

This question is related to the question whether modal graph theory interprets set-theoretic truth. If it does, then the answer to this question will be negative, since by Tarski's theorem on the non-definability of truth one cannot define first-order set-theoretic truth within first-order set theory.

\begin{theorem}
No \ZFC-definable class defines the satisfaction relation for modal graph theory with actuality, that is, a truth predicate for the class of graphs in $\mathcal{L}_\sim^{\possible,@}$.
\end{theorem}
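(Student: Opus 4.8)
The plan is to reduce to Tarski's theorem on the undefinability of truth, using the interpretation of full set-theoretic truth provided by theorem \ref{Theorem.Set-theoretic-truth-is-interpretable}. Suppose toward contradiction that there is a \ZFC-definable class $C$ --- say defined by the formula $\theta_C(x,\bar p)$ in the language of set theory, with set parameters $\bar p$ --- that is a full satisfaction relation for the class of all graphs in the language $\mathcal{L}_\sim^{\possible,@}$; that is, $\theta_C$ holds of exactly the triples $\langle G,\ulcorner\psi\urcorner,s\rangle$ for which the graph $G$ satisfies the modal assertion $\psi\in\mathcal{L}_\sim^{\possible,@}$ under the assignment $s$ (with the actuality semantics of section \ref{Section.Actuality}), and in particular $C$ obeys the Tarski-style recursive clauses for atomic formulas, Boolean connectives, quantifiers, and the operators $\possible$, $\necessary$, and @.

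First I would invoke theorem \ref{Theorem.Set-theoretic-truth-is-interpretable}, which supplies a primitive recursive (hence \ZFC-definable) translation $\varphi\mapsto\varphi^*$ of first-order set-theoretic assertions into $\mathcal{L}_\sim^{\possible,@}$ with $(V,\in)\satisfies\varphi$ exactly when the empty graph satisfies $\varphi^*$. Setting $\mathrm{Tr}(x)$ to assert that $x$ codes a set-theoretic sentence $\varphi$ and $\langle\emptyset,\ulcorner\varphi^*\urcorner,\emptyset\rangle\in C$, we obtain a \ZFC-definable class (from the same parameters $\bar p$) for which $\mathrm{Tr}(\ulcorner\varphi\urcorner)$ holds if and only if $(V,\in)\satisfies\varphi$, for every first-order set-theoretic sentence $\varphi$. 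Applying the diagonal lemma to $\neg\mathrm{Tr}(x)$ produces a sentence $\sigma$ with $(V,\in)\satisfies\sigma\iff\neg\mathrm{Tr}(\ulcorner\sigma\urcorner)$, contradicting the displayed equivalence. Hence no such $C$ exists.

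The step that requires care, rather than calculation, is the metamathematical status of theorem \ref{Theorem.Set-theoretic-truth-is-interpretable}: as noted in section \ref{Section.Interpretative-power-of-modal-graph-theory}, that theorem is schematic over \ZFC. The clean way to run the argument is to observe that it is precisely the assumed existence of the modal satisfaction class $C$ that upgrades the scheme to a single class: given $C$, one proves in \ZFC (inducting against the actual class $C$, using the recursive clauses defining $\varphi\mapsto\varphi^*$ and those satisfied by $C$ --- atomic to atomic, Boolean to Boolean, and $(\exists x\,\varphi)^*$ to a $\possible\exists x$ pattern whose $C$-value ranges over the coded sets) that $\mathrm{Tr}$ is a genuine first-order satisfaction class for $(V,\in)$, and then Tarski's theorem in the form ``no definable class is a satisfaction class for $(V,\in)$'' yields the contradiction. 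Alternatively one may simply carry out the whole proof in \GBC${}+\ETR_\omega$ (or \KM), where theorem \ref{Theorem.Set-theoretic-truth-is-interpretable} is an honest theorem. One should also note that admitting set parameters $\bar p$ in $\theta_C$ is harmless, since the parametrized form of Tarski's theorem is equally available.
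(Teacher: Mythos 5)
Your proof is correct and follows essentially the same route as the paper's: reduce to Tarski's theorem on the undefinability of truth via the interpretation of set-theoretic truth given in theorem \ref{Theorem.Set-theoretic-truth-is-interpretable}. The paper states this in two sentences; your additional care about the schematic status of that theorem over \ZFC---and the observation that the hypothesized satisfaction class $C$ itself upgrades the scheme to a single definable truth predicate---is a worthwhile elaboration of a point the paper leaves implicit, but it is not a different argument.
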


\begin{proof}
By theorem \ref{Theorem.Set-theoretic-truth-is-interpretable}, set-theoretic truth is interpretable in modal graph theory with actuality. So by Tarski's theorem on the non-definability of truth, there can be no definable truth predicate.
\end{proof}

Meanwhile, in the stronger class-based theories such as \KM\ or $\GBC+\ETR$, there is a satisfaction class for first-order set-theoretic truth, and with this we can define the satisfaction relation for modal graph theory with actuality. To our way of thinking, the necessary set-theoretic difficulty of defining the modal semantics for a potentialist system such as $\Mod(T)$ poses a philosophical difficulty for advocates of potentialism seeking a simplified, reduced ontology, for it is as difficult to define the modal semantics for modal graph theory with actuality, for example, even when one's ontology is reduced to having only a set-sized graph at a time, as it is to define the semantics for full-blown set-theoretic truth. Even \ZFC\ cannot do it. In this sense, it seems impossible for the potentialists to have the parsimonious ontology they seek, if they also wish to have a coherent potentialist semantics.


\bibliographystyle{halpha}
\bibliography{MathBiblio,HamkinsBiblio,WebPosts,references}

\newcommand{\etalchar}[1]{$^{#1}$}
\begin{thebibliography}{GHH{\etalchar{+}}20}

\bibitem[GHH{\etalchar{+}}20]{GitmanHamkinsHolySchlichtWilliams:The-exact-strength-of-the-class-forcing-theorem}
Victoria Gitman, Joel~David Hamkins, Peter Holy, Philipp Schlicht, and Kameryn
  Williams.
\newblock The exact strength of the class forcing theorem.
\newblock {\em Journal of Symbolic Logic}, 2020, 1707.03700.

\bibitem[GHJ16]{GitmanHamkinsJohnstone2016:WhatIsTheTheoryZFC-Powerset?}
Victoria Gitman, Joel~David Hamkins, and Thomas~A. Johnstone.
\newblock What is the theory {ZFC} without {Powerset}?
\newblock {\em Math.~Logic Q.}, 62(4--5):391--406, 2016, 1110.2430.

\bibitem[Ham03]{Hamkins2003:MaximalityPrinciple}
Joel~David Hamkins.
\newblock A simple maximality principle.
\newblock {\em Journal of Symbolic Logic}, 68(2):527--550, 2003, math/0009240.

\bibitem[Ham18]{Hamkins:The-modal-logic-of-arithmetic-potentialism}
Joel~David Hamkins.
\newblock The modal logic of arithmetic potentialism and the universal
  algorithm.
\newblock {\em ArXiv e-prints}, pages 1--35, 2018, 1801.04599.
\newblock Under review.

\bibitem[HL08]{HamkinsLoewe2008:TheModalLogicOfForcing}
Joel~David Hamkins and Benedikt Löwe.
\newblock The modal logic of forcing.
\newblock {\em Trans.~AMS}, 360(4):1793--1817, 2008, math/0509616.

\bibitem[HL13]{HamkinsLoewe2013:MovingUpAndDownInTheGenericMultiverse}
Joel~David Hamkins and Benedikt Löwe.
\newblock Moving up and down in the generic multiverse.
\newblock {\em Logic and its Applications, ICLA 2013 LNCS}, 7750:139--147,
  2013, 1208.5061.

\bibitem[HL19]{HamkinsLinnebo:Modal-logic-of-set-theoretic-potentialism}
Joel~David Hamkins and \O{}ystein Linnebo.
\newblock The modal logic of set-theoretic potentialism and the potentialist
  maximality principles.
\newblock {\em Review of Symbolic Logic}, October 2019, 1708.01644.

\bibitem[HLL15]{HamkinsLeibmanLoewe2015:StructuralConnectionsForcingClassAndItsModalLogic}
Joel~David Hamkins, George Leibman, and Benedikt Löwe.
\newblock Structural connections between a forcing class and its modal logic.
\newblock {\em Israel Journal of Mathematics}, 207(2):617--651, 2015,
  1207.5841.

\bibitem[Hod93]{Hodges1993:ModelTheory}
Wilfrid Hodges.
\newblock {\em {Model Theory}}, volume~42 of {\em Encyclopedia of Mathematics
  and its Applications}.
\newblock Cambridge University Press, 1993.

\bibitem[HW17]{HamkinsWoodin:The-universal-finite-set}
Joel~David Hamkins and W.~Hugh Woodin.
\newblock The universal finite set.
\newblock {\em ArXiv e-prints}, pages 1--16, 2017, 1711.07952.
\newblock Manuscript under review.

\bibitem[HW19]{HamkinsWilliams:The-universal-finite-sequence}
Joel~David Hamkins and Kameryn~J. Williams.
\newblock The $\sigma_1$-definable universal finite sequence.
\newblock {\em ArXiv e-prints}, 2019, 1909.09100.
\newblock Under review.

\end{thebibliography}

\end{document}